\newtheorem{theorem}{Theorem}[section]
\newtheorem{lemma}[theorem]{Lemma}
\newtheorem{proposition}[theorem]{Proposition}
\theoremstyle{definition}
\newtheorem{remark}[theorem]{Remark}
\newcommand{\Pro}{\mathbf{P}}
\newcommand{\E}{\mathbb{E}}
\newcommand{\R}{\mathbb{R}}
\newcommand{\al}{\alpha}
\newcommand{\eps}{\varepsilon}
\newcommand{\bL}{\mathbb{L}}
\newcommand{\bT}{\mathbb{T}}
\newcommand{\rma}{\mathrm{a}}
\newcommand{\mE}{\mathbf{E}}
\newcommand{\cE}{\mathcal{E}}
\newcommand{\cO}{\mathcal{O}}
\newcommand{\cL}{\mathcal{L}}
\newcommand{\cG}{\mathcal{G}}
\newcommand{\cV}{\mathcal{V}}
\newcommand{\cB}{\mathcal{B}}
\newcommand{\cM}{\mathcal{M}}
\newcommand{\cH}{\mathcal{H}}
\newcommand{\cA}{\mathcal{A}}
\newcommand{\cC}{\mathcal{C}}
\newcommand{\cI}{\mathcal{I}}
\newcommand{\ocA}{\overline{\mathcal{A}}}
\newcommand{\cS}{\mathcal{S}}
\newcommand{\trace}{\mbox{Tr}}
\newcommand{\aeff}{\rma^{{\rm eff}}}
\newcommand{\aeffk}{\rma^{k,{\rm eff}}}
\newcommand{\urma}{\underline{\rma}}
\newcommand{\urmak}{\urma^{k,{\rm eff}}}
\newsavebox{\@brx}
\newcommand{\llangle}[1][]{\savebox{\@brx}{\(\m@th{#1\langle}\)}%
  \mathopen{\copy\@brx\mkern2mu\kern-0.9\wd\@brx\usebox{\@brx}}}
\newcommand{\rrangle}[1][]{\savebox{\@brx}{\(\m@th{#1\rangle}\)}%
  \mathclose{\copy\@brx\mkern2mu\kern-0.9\wd\@brx\usebox{\@brx}}}
\newcommand{\blla}{ \llangle[\Big]}
\newcommand{\brra}{ \rrangle[\Big]}
\newcommand{\lla}{ \llangle[]}
\newcommand{\rra}{ \rrangle[]}
\numberwithin{equation}{section}
\title[Asymptotic decomposition of solutions]{
Asymptotic decomposition of solutions to parabolic equations with a random microstructure}
\author[M. Kleptsyna]{Marina Kleptsyna}
\address[M. Kleptsyna]{Le Mans Universit\'e,
Laboratoire Manceau de Math\'ematiques,
Avenue Olivier Messiaen,
72085 Le Mans, Cedex 9, France.}
\email{{\tt marina.kleptsyna@univ-lemans.fr}}
\author[A. Piatnitski]{Andrey Piatnitski}
\address[A. Piatnitski]{The Arctic University of Norway, campus Narvik,  P.O.Box 385,
8505 Narvik, Norway\ \ \  and \ \ \   Institute for Information Transmission Problems of RAS, 19, Bolshoy Karetny per., Moscow 127051,
Russia}
\email{{\tt apiatnitski@gmail.com}}
\author[A. Popier]{Alexandre Popier}
\address[A. Popier]{Le Mans Universit\'e, Laboratoire Manceau de Math\'ematiques, Avenue Olivier Messiaen, 72085 Le Mans,
Cedex 9, France}
\email{{\tt alexandre.popier@univ-lemans.fr}}
\keywords{Homogenization; Diffusion approximation; Operator with random coefficients}
\subjclass[2010]{35K15, 60F05, 60H15 }
\begin{document}

\begin{abstract}
We consider a Cauchy problem for a divergence form second order parabolic operator with rapidly oscillating coefficients that are periodic in spatial variables and random stationary ergodic in time. As was proved in \cite{JKO_1} and \cite{KP_1995} in this case the homogenized operator is deterministic.

We obtain the leading terms of the asymptotic expansion of the solution, these terms being deterministic functions, and show that a properly renormalized
difference between the solution and the said leading terms converges to a solution of some SPDE.

\end{abstract}

\maketitle


\tableofcontents

\section{Introduction}\label{s_intro}
This work is devoted to obtaining an exact asymptotic development (as $\eps\to 0$) of
solutions to the following Cauchy problem
\begin{equation}\label{ori_cauch}
\left\{ \begin{array}{l}
\displaystyle
\frac\partial{\partial t}u^\eps=\mathrm{div}\Big[a\Big(\frac x\eps,\xi_{\frac t{\eps^\alpha}}\Big)\nabla u^\eps\Big]= \cA^\eps u^\eps \qquad
\hbox{in }\mathbb R^d\times(0,T]\\[4mm]
u^\eps(x,0)=\imath(x).
\end{array}\right.
\end{equation}
Here $\eps$ is a small positive parameter that tends to zero, $\alpha>0$, $\alpha\neq 2$,
and $a(z,s)$ is a positive definite matrix whose entries are periodic in the $z$ variable and random stationary ergodic in $s$.

It is known (see \cite{JKO_1, KP_1995}) that this problem admits homogenization and that the homogenized operator
is deterministic and has constant coefficients.  The homogenized Cauchy problem takes the form
\begin{equation}\label{eq:u_0}
\left\{ \begin{array}{c}
\displaystyle
\frac\partial{\partial t}u^0=\mathrm{div}(\aeff \nabla u^0)\\[3mm]
u^0(x,0)=\imath(x).
\end{array}\right.
\end{equation}
The formula for the effective matrix $\aeff$ is given in \eqref{eq:def_aeff} in Section 2 (see also \cite{KP_1995}).


In the existing literature there is a number of works that deal with homogenization of random parabolic problems.
The results obtained in \cite{Ko78} and \cite{PV80} for random divergence form elliptic operators also apply
to the parabolic case. In the presence of large lower order terms the limit dynamics might remain random
and show diffusive or even more complicated behaviour.  The papers \cite{CKP_2001}, \cite{PP}, \cite{KP_3}
focus on the case of time dependent parabolic operators  with periodic in spatial variables and random
in time coefficients. The fully random case has been studied in \cite{PP_1}, \cite{Ba_1}, \cite{Ba_2},
\cite{HPP}.

One of the important aspects of homogenization theory is  estimating the rate of convergence.
For random operators the first estimates have been obtained in \cite{Yu80}. Further important progress in this direction
was achieved in the recent works \cite{GO12}, \cite{GM12}.

Problem (\ref{ori_cauch}) in the case of diffusive scaling $\alpha=2$ was studied in our previous work \cite{KPP_2015}.
It was shown that, under proper mixing conditions, the difference $u^\eps-u^0$ is of order $\eps$, and that
the normalized difference $\eps^{-1}(u^\eps-u^0)$ after subtracting an appropriate corrector, converges in law
to a solution of some limit SPDE.

However, for positive $\alpha\neq 2$, the situation becomes much more intriguing. The random solution $u^{\eps}$ admits an asymptotic decomposition as $\eps\to 0$, that is a sum of terms each of which scales as a power of~$\eps$. Our main result, Theorem~\ref{thm:main_result} below, provides such a description; we will start by its brief description.

Heuristically speaking, this theorem can be thought of as follows. First, as $\eps\to 0$, the \emph{random} solution $u^{\eps}$ converges to the \emph{deterministic} limit $u^0$. Considering the difference $u^{\eps}-u^0$ and dividing it by an appropriate power of $\eps$, one can pass to the limit; if the limit is deterministic, we iterate this procedure
until at some stage we reach a \emph{random} limit. Returning to $u^\eps$, we obtain its expansion being a sum of terms of increasing order of $\eps$, with all but the last terms being deterministic, and the random term coming with the scaling factor $\eps^{\alpha/2}$.

A first remark here is that the powers of $\varepsilon$ appearing in the expansion are not all integer,
but also of the form $\eps^{\delta k}$, where $\delta=|\alpha-2|$.

An important observation  is that for $\alpha>2$ the final power of $\eps$, the one associated to the random limit, is greater than $1$. And that looks very surprising (even next to impossible) due to the following handwaving argument. The solution to the Cauchy problem at some $\eps$ is naturally connected to the diffusion process on a compact at the $\eps^2$-rescaled time $t/\eps^2$. Now, if we were considering behaviour of the averages of the type
\[
\int_0^t g(s, x_{s/\eps^2}) ds,
\]
where $x_s$ is a sufficiently well-mixing ergodic process and $g$ is a function, we would have convergence to the integral of the space average $\int_0^t \bar{g}(s)\, ds$ (where $\bar{g}(s)$ is the expectation of $g(s,\cdot)$ with respect to the stationary distribution of $x_{\cdot}$) with the Central Limit Theorem-governed speed $\sqrt{\eps^2}=\eps^1$. In our problem, the natural rescaling is
$(x_{t/\eps^2}, \xi_{t/\eps^\alpha})$, so one would naturally expect that the randomness occurs at the scaling $\eps^{1\wedge \frac{\alpha}{2}}$. However, it is not the case: for $\alpha>2$ the randomness occurs not at the power $\eps^1$, but still at the power~$\eps^{\alpha/2}$.

\subsection{Organization  of the paper}

The paper is organized as follows. In Section \ref{s_1} we introduce the studied problem and provide all the assumptions. Then we formulate the main result of the paper (Theorem \ref{thm:main_result}) that reads differently depending on whether $\al < 2$, or  $2<\al < 4$, or $\al \geq 4$. We also define the numerous correctors and auxiliary problems required to state the main result.

In Section \ref{sect:formal_exp} we give the formal expansion $\cE^\eps$ of $u^\eps$. Formally we define the function $\cE^\eps$ such that
$$
R^\eps(x,t) =\eps^{-\alpha/2}\left[ u^\eps(x,t)-\cE^\eps (x,t) \right]
$$
converges in law in a suitable functional space to some non trivial and random limit $q^0$. The main result of this section is given by Propositions \ref{prop:formal_exp_al<2} and \ref{prop:dev_al}.
Constructing the formal expansion of $u^\eps$ gives rise to the  sequences of deterministic constants $\aeffk$ and $\urmak$ and smooth functions $v^k$ and $u^k$ that characterize the leading part of the expansion.
The normalized difference  $R^\eps$ contains asymptotically large parameters 
both in its dynamics and, for $\al > 2$, in its initial condition \eqref{eq:init_cond_R_eps}.
The function $R^\eps$ can be represented as the sum of the following five terms: $R^\eps = r^\eps +\check r^\eps +  \hat r^\eps + \widetilde r^\eps + \rho^\eps $, where the limit behaviour of each of  these terms depends on whether $\alpha<2$, or $2<\alpha<4$, or $\alpha\geq 4$. We will show that
\begin{itemize}
\item The term  $r^\eps$ contains a  martingale with a large factor $\eps^{1-\al}$ if $\al < 2$ and $\eps^{-1}$ if $\al > 2$. This term converges to zero for $\al<2$ (Proposition \ref{prop:limit_ident_al<2}) and to $q^0$ for $\al > 2$ (Proposition \ref{prop:limit_ident_al>2}). 
\item $\check r^\eps$ appears only for $\al < 2$ and converges to $q^0$ (see Proposition~\ref{prop:weak_conv_al<2}).
\item $\hat r^\eps$ exists only when $\al > 2$ and converges in a weak topology to zero.
\item $ \widetilde r^\eps$ converges in a strong topology to zero.
\item The last term $\rho^\eps$ is required if  $\al > 2$.  This term compensates asymptotically growing initial condition of $R^\eps$. We prove that it also converges to zero.
\end{itemize}
Let us emphasize that in this section the dimension $d$ plays no role and some terms in $\cE^\eps$ may be negligible depending on the value of $\al$.

Section \ref{sect:conv_sing_mart_part} focuses on the proof of the convergence of $r^\eps$. This term contains, at least for $\al >1$, a martingale with an asymptotically growing parameter. After proper choice of a number of free parameters we  show that the contribution of this term  weakly converges to zero if $\al < 2$ or to the limit $q^0$ if $\al > 2$.
Here we widely use the fact that $d=1$.

In Section \ref{sect:rest_init_cond} the trouble comes from the initial condition for $R^\eps$ when $\al > 2$. Here we construct an asymptotic expansion of the corresponding terms (see Eq. \eqref{eq:expansion_varrho}) and study their properties (Lemmata \ref{lmm:behaviour_beta_0}, \ref{lmm:behaviour_beta_ell} and \ref{lmm:behaviour_m_ell}). We prove that under our particular choice of the initial condition for the terms of the expansion
$\rho^\eps$ converges to zero in a strong sense (Proposition \ref{prop:rest_init_cond}) and does not contribute in the limit equation. In this section the dimension $d$ could be any positive integer.

To summarize, the conclusion of Theorem \ref{thm:main_result} follows from
\begin{itemize}
\item For $\al < 2$: Propositions \ref{prop:formal_exp_al<2}, \ref{prop:weak_conv_al<2} and \ref{prop:limit_ident_al<2}.
\item For $\al > 2$: Propositions \ref{prop:dev_al}, \ref{prop:limit_ident_al>2} and \ref{prop:rest_init_cond}.
\end{itemize}

 \section{Problem setup and main result}\label{s_1}

In this section we provide all the assumptions for Problem \eqref{ori_cauch}, introduce some notations and formulate the main results.
\subsection{Assumptions }
Concerning the coefficients of Equation \eqref{ori_cauch}, we assume that:
\begin{enumerate}
\item \label{a1} The initial condition $\imath$ belongs to space\footnote{In fact, this condition can be essentially relaxed (see Remark \ref{r_regu_ini}).} $C_0^\infty(\mathbb R)$.
\item \label{a2} Function $a$ is periodic in $z$ and smooth in both variables $z$ and $y$. Moreover, for each $N>0$ there exists $C_N>0$ such that
$$
\|a\|_{C^N(\mathbb T\times\mathbb R^n)}\leq C_N.
$$
Here and in what follows we identify periodic functions with functions on the torus $\mathbb T$.
\item \label{a3} Coefficient $a=a(z,y)$ satisfies the uniform ellipticity condition: there exists $\lambda > 0$ such that for any $z \in \mathbb T$ and any $y \in \mathbb R^n$:
$$
\lambda^{-1} \leq a(z,y)\leq \lambda;
$$
\end{enumerate}
The random noise $\xi=(\xi_s,\ s\geq 0)$ is a diffusion process in $\mathbb R^n$ with a generator
$$
\mathcal{L}= \frac{1}{2} \trace [q(y)D^2]+ b(y).\nabla
$$
($\nabla$ stands for the gradient, $D^2$ for the Hessian matrix).
Moreover we suppose that matrix-function $q$ and vector-function $b$ possess the following properties:
\begin{enumerate}
\setcounter{enumi}{3}
\item \label{a4} The matrix $q=q(y)$  satisfies the uniform ellipticity condition: there exists $\lambda>0$ such that
$$
\lambda^{-1}|\zeta|^2\leq q(y)\zeta\cdot\zeta\leq \lambda|\zeta|^2, \quad y,\,\zeta\in\mathbb R^n.
$$
Moreover there exists a matrix $\sigma=\sigma(y)$ such that $q(y) = \sigma^*(y)\sigma(y)$.
\item \label{a5} The matrix function $\sigma$ and vector function $b$ are smooth, that is for each $N>0$ there exists $C_N>0$ such that
$$
\|\sigma\|_{C^N(\mathbb R^n)}\leq C_N,\qquad \|b\|_{C^N(\mathbb R^n)}\leq C_N.
$$
\item \label{a6} The following inequality holds for some $R>0$ and $C_0>0$ and $p>-1$:
 $$
 b(y)\cdot y\leq -C_0|y|^p \quad \hbox{for all } y\in\{y\in\mathbb R^n\,:\,|y|\geq R\}.
 $$
\end{enumerate}
We say that Condition {\bf (A)} holds if \ref{a1} to \ref{a6} are satisfied.

Let us recall that according to \cite{pard:vere:01, pard:vere:03} under conditions \ref{a4} and \ref{a6} a diffusion process $\xi_\cdot$ with generator $\mathcal{L}$ has an invariant measure in $\mathbb R^n$ that has a smooth density $p=p(y)$. For any $N>0$ it holds
$$(1+|y|)^N p(y)\leq C_N$$
with some constant $C_N$. The function $p$ is the unique up to a multiplicative constant bounded solution  of
the equation $\mathcal{L}^* p=0$; here $\mathcal{L}^*$ denotes the formally adjoint operator. We assume that the process $\xi$ is stationary and distributed with the density $p$. In the rest of the paper
\begin{itemize}
\item $\overline  f$ denotes the mean w.r.t. the invariant measure $p$;
\item $\langle f \rangle$ is the mean on the torus $\mathbb T$.
\end{itemize}
$a^\eps$ and $\mathfrak{a}^\eps$ denote the matrices:
$$a^\eps=a\left( \frac{x}{\eps},\xi_{\frac{t}{\eps^\alpha}}\right),\quad \mathfrak{a}^\eps = a\left( z,\xi_{\frac{t}{\eps^{\alpha -2}}}\right).$$

From \cite{KP_1995} under Condition {\bf (A)}, we know that $u^\eps$ converges in probability in the space
$$V_T=L^2_w(0,T; H^1(\R))\cap C(0,T;L^2_w(\R))$$
to $u^0$, the solution of \eqref{eq:u_0}
\begin{equation*}
u^0_t = \mbox{div }( \aeff \nabla u^0 ),\quad u^0(x,0) = \imath(x),
\end{equation*}
where the effective matrix $\aeff$ is defined by:
\begin{equation}\label{eq:def_aeff}
\aeff = \overline{ \langle a + a \nabla_z \chi^{0} \rangle}.
\end{equation}
The symbol $w$ in the definition of $V_T$ means that the corresponding space is endowed with its weak topology.

The corrector $\chi^0$ is defined in different ways depending on whether $\alpha < 2$ (Equation \eqref{eq:aux_0_dif_al<2}), or $\alpha > 2$ (Equation \eqref{eq:def_chi_0_al>2}), thus the function $u^0$ is not the same for $\al > 2$ and $\al < 2$. More precisely, for $\al< 2$, the function $\chi^0=\chi^0(z,y)$ is a periodic solution of the equation
\begin{equation}\label{eq:aux_0_dif_al<2}
\mathrm{div}_z\big(a(z,y)\nabla_z \chi^0(z,y)\big)=-\mathrm{div}_z a(z,y);
\end{equation}
here $y\in\mathbb R^n$ is a parameter. We choose an additive constant in such a way that
\begin{equation}\label{eq:norm_cond_chi0}
\int_{\mathbb T}\chi^0(z,y)\,dz=0.
\end{equation}
When $\al > 2$, the corrector $\chi^0$ is the solution of
\begin{equation} \label{eq:def_chi_0_al>2}
\bar \cA \chi^0 = \mbox{div } \left[ \bar a \nabla \chi^0\right] = -  \bar a_x
\end{equation}
where $\bar a$ is the mean value of $a$ w.r.t. $y$:
$$\bar a (z) = \int_{\R^n} a(z,y) p(y) dy.$$
It is known that matrix $\aeff$ is positive definite in both cases (see, for instance, \cite{CKP_2001, KP_1995}).

\subsection{Main result}

In the rest of the paper we denote
\begin{itemize}
\item $\delta = |\alpha-2| >0$,
\item $J_0=\lfloor\frac\alpha{2\delta}\rfloor+1$, where $\lfloor\cdot\rfloor$ stands for the integer part,
\item $J_1 = \lfloor\frac\alpha{2}\rfloor$.
\end{itemize}
Let us remark that: $\min(\delta +1 , J_1+1 ,\delta J_0 ) >\alpha /2.$ For technical reasons, we also use $N_0= 2J_0+2$.

For any $\al \neq 2$, we construct a sequence of constants $\aeffk$, $k\geq 1$, and a sequence of functions $u^j$, $j\geq 1$,  as solutions of problems
\begin{equation}\label{eq:def_u_k}
\begin{array}{c}
\displaystyle
\frac\partial{\partial t}u^j = \mathrm{div}(\aeff\nabla u^j)+\sum\limits_{k=1}^{j} \aeffk\frac{\partial^2}{\partial x^2} u^{j-k} + w^j
\end{array}
\end{equation}
with initial condition $u^j(x,0) = 0$. The definition of the sequence $\aeffk$ depends on the sign of $\al - 2$ (see Eq. \eqref{eq:def_aeffk_al<2} and \eqref{eq:def_aeffk_al>2}). The smooth functions $w^j$ are defined recursively and depend also on the sign of $\al - 2$ (see Eq. \eqref{eq:def_w_k_al<2} and \eqref{eq:def_w_k_al>2} ). They are used to control the large martingale terms (see Section \ref{sect:conv_sing_mart_part} and Eq. \eqref{eq:mart_problem}).

For $\al > 2$ (that is $J_1\geq 1$), to obtain the desired convergence we need a second sequence of auxiliary functions with a different scaling. We construct two other sequences of constants $(\urmak)_{k\geq 1}$ and $(\cI_k)_{k\geq 1}$ and introduce $v^0 = u^0$ and then
\begin{equation} \label{eq:def_v_k_al>2}
\frac\partial{\partial t} v^j = \aeff \frac{\partial^2}{\partial x^2}v^j + S^j,\quad v^j(x,0)=\cI_j \partial_x^j u^0(x,0),
\end{equation}
with
\begin{equation*}
S^j(x,t) = \sum_{k=1}^{j} \urmak(\partial^{k+2}_x v^{j-k})\qquad\hbox{for } j\geq 1\ .
\end{equation*}
Finally the correctors $\chi^j$ are defined by \eqref{eq:def_chi_k_al>2}. These additional correctors $v^j$ and $\chi^j$ are used in particular to control the initial value of the remainder (see Section \ref{sect:rest_init_cond}).

Our main result is the following.
\begin{theorem}\label{thm:main_result}
Under Condition {\bf (A)}, there exists a non-negative constant $\Lambda$ (defined by \eqref{eq:def_Lambda_al<2} for $\al < 2$ and \eqref{eq:def_Lambda_al>2} for $\al > 2$), such that the normalized functions
\begin{eqnarray*}
&& q^\eps= \eps^{-\al/2} \Bigg\{ u^\eps (x,t) - u^0(x,t) - \sum_{k=1}^{J_0} \eps^{k\delta} u^k(x,t)   \\
& &\qquad  \qquad \left. - \sum_{k=1}^{J_1} \eps^{k} \left[ v^k(x,t)  +  \sum_{\ell=1}^k \chi^{\ell-1}\left( \frac{x}{\eps}\right) \partial^{\ell}_x v^{k-\ell} \left(x,t\right) \right]   \right\}
\end{eqnarray*}
converge in law, as $\eps\to0$, in $L_w^2(\mathbb R\times(0,T))$ to the unique solution of the following SPDE
 \begin{equation}\label{eq:eff_spde}
\begin{array}{c}
\displaystyle
dq^0=\mathrm{div}(\aeff\nabla q^0)\,dt+(\Lambda^{1/2}) \left( \frac{\partial^2}{\partial x^2}u^0\right) \,dW_t
\\[3mm]
q^0(x,0)=0;
\end{array}
\end{equation}
driven by a standard one-dimensional Brownian motion $W$.
\end{theorem}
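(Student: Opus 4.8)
The plan is to build the formal expansion $\cE^\eps$, peel the remainder into the five pieces announced in the introduction, and prove convergence of each piece separately, the only surviving term producing the stochastic integral in \eqref{eq:eff_spde}. Concretely I would proceed as follows.

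\textbf{Step 1: Formal expansion and the equation for the remainder.}
First I would substitute the ansatz
\[
\cE^\eps(x,t) = u^0(x,t) + \sum_{k=1}^{J_0}\eps^{k\delta}u^k(x,t) + \sum_{k=1}^{J_1}\eps^k\Big[v^k(x,t) + \sum_{\ell=1}^k\chi^{\ell-1}\big(\tfrac x\eps\big)\partial_x^\ell v^{k-\ell}(x,t)\Big]
\]
into the operator $\partial_t - \cA^\eps$, using the two-scale structure (the $\chi^{\ell-1}(x/\eps)$ terms absorb the $\eps^{-1}$ and $\eps^{-2}$ contributions coming from $\mathrm{div}[a^\eps\nabla\cdot]$ hitting the oscillating factor, while the martingale dynamics of the time process $\xi_{t/\eps^\alpha}$ forces the fractional powers $\eps^{k\delta}$ through the auxiliary problems \eqref{eq:def_u_k}). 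The cell problems \eqref{eq:aux_0_dif_al<2}, \eqref{eq:def_chi_0_al>2}, \eqref{eq:def_chi_k_al>2} and the definitions of $\aeffk$, $\urmak$, $\cC_{k,m}$, $\cI_k$ are exactly what is needed to cancel all terms up to order $\eps^{\alpha/2}$; this is the content of Propositions~\ref{prop:formal_exp_al<2} and~\ref{prop:dev_al}. The outcome is an equation of the form
\[
\partial_t R^\eps = \cA^\eps R^\eps + (\text{explicit source terms}), \qquad R^\eps = \eps^{-\alpha/2}(u^\eps-\cE^\eps),
\]
with a large martingale source (of order $\eps^{1-\alpha}$ for $\alpha<2$, of order $\eps^{-1}$ for $\alpha>2$) and, when $\alpha>2$, a non-trivial initial condition \eqref{eq:init_cond_R_eps} carrying negative powers of $\eps$.

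\textbf{Step 2: Splitting and a priori bounds.}
Then I would split $R^\eps = r^\eps + \check r^\eps + \hat r^\eps + \widetilde r^\eps + \rho^\eps$ so that $\rho^\eps$ solves the homogeneous equation with the bad initial datum, $r^\eps$ carries the singular martingale source, and $\hat r^\eps,\widetilde r^\eps$ collect lower-order remainders that vanish (weakly, resp.\ strongly). Uniform energy estimates in $V_T$ (multiply the equation by $R^\eps$, integrate, use uniform ellipticity \ref{a3} and the Itô isometry together with the mixing/moment bounds on $\xi$ furnished by \ref{a4}--\ref{a6} and \cite{pard:vere:01,pard:vere:03}) give tightness of $R^\eps$ in $L^2_w(\R\times(0,T))$. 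Here I would crucially exploit $d=1$: the key trick (Section~\ref{sect:conv_sing_mart_part}) is to control not $R^\eps$ directly but its antiderivative in $x$, which lies in $H^1(\R)$; the free parameters $w^k$ in \eqref{eq:def_u_k}, fixed by \eqref{eq:def_w_k_al>2}/\eqref{eq:def_w_k_al<2}, are precisely tuned to make this antiderivative uniformly bounded, killing the would-be $\eps^1$ randomness and leaving only the $\eps^{\alpha/2}$ contribution.

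\textbf{Step 3: Identification of the limit.}
Next, along a subsequence $R^\eps \rightharpoonup q$, I would identify $q = q^0$. The deterministic part of the limit dynamics is $\mathrm{div}(\aeff\nabla\cdot)$ by the standard homogenization argument (the oscillating test function method / compensated compactness, as in \cite{KP_1995}). The martingale part requires computing the limit quadratic variation of the rescaled martingale driving $r^\eps$: by ergodicity of $\xi$ and an averaging computation, $\eps^{-\alpha}$ times the bracket of the relevant martingale converges to $\Lambda\,(\partial_x^2 u^0)^2\,dt$ with $\Lambda$ given by \eqref{eq:def_Lambda_al<2}/\eqref{eq:def_Lambda_al>2}, and a martingale CLT (functional, with respect to a limiting Brownian motion $W$) upgrades convergence of brackets to convergence in law of $r^\eps$ (hence of $R^\eps$) to the solution of \eqref{eq:eff_spde}; this is Propositions~\ref{prop:weak_conv_al<2}, \ref{prop:limit_ident_al<2}, \ref{prop:limit_ident_al>2}. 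For $\alpha>2$ one additionally invokes Proposition~\ref{prop:rest_init_cond}: the constants $\cI_k$ in \eqref{eq:def_v_k_al>2} are chosen exactly so that $\rho^\eps\to0$ strongly, so the bad initial layer does not pollute the limit. Uniqueness of the SPDE solution (linear equation with additive noise, Lipschitz coefficients) then removes the subsequence.

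\textbf{Main obstacle.}
The hardest part will be Step 2 for $\alpha>2$: simultaneously taming the $\eps^{-1}$ martingale source \emph{and} the negative powers of $\eps$ in the initial condition, while keeping the final random term at scale $\eps^{\alpha/2}>\eps$ rather than at the naively expected CLT scale $\eps$. This is where the one-dimensionality is indispensable — the cancellation that demotes the $\eps^1$ term relies on integrating in $x$ and on the careful recursive choice of $w^k$ — and where the bookkeeping of the two interlocking expansions (the $u^k$ at scales $\eps^{k\delta}$ and the $v^k$ at scales $\eps^k$) is most delicate. The compatibility of the two families, i.e.\ that all intermediate powers of $\eps$ below $\eps^{\alpha/2}$ genuinely cancel in $\partial_t R^\eps$, is the crux of Propositions~\ref{prop:formal_exp_al<2}--\ref{prop:dev_al} and is what makes the clean statement of Theorem~\ref{thm:main_result} possible.
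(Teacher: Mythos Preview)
Your plan follows the paper's architecture faithfully and cites the right propositions, but Step~3 blurs an important asymmetry between the two regimes that the paper keeps separate.

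For $\alpha>2$ your description is accurate: the singular martingale in $r^\eps$ (equation~\eqref{eq:SPDE_eps}) survives in the limit, the antiderivative $v^\eps$ is shown to converge weakly to an SPDE (Proposition~\ref{prop:limit_ident_al>2}) via a martingale-problem argument whose quadratic variation gives the $\Lambda$ in~\eqref{eq:def_Lambda_al>2}, and $r^\eps\to q^0$.

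For $\alpha<2$ the mechanism is \emph{not} a martingale CLT applied to $r^\eps$. In fact $r^\eps\to 0$ (Proposition~\ref{prop:limit_ident_al<2}): the antiderivative trick and the choice of $w^k$ kill it entirely. The limit $q^0$ comes instead from $\check r^\eps$, whose source in~\eqref{eq:dyn_check_r_eps_al<2} is \emph{absolutely continuous}, namely $\eps^{-\alpha/2}\big[\widehat a^k(x/\eps,\xi_{t/\eps^\alpha})-a^{k,\mathrm{eff}}\big]\partial_x^2 u^j$. The Brownian motion $W_t$ in~\eqref{eq:eff_spde} arises from the functional CLT for the ergodic average $A^{\eps,0}(t)=\eps^{\alpha/2}\int_0^{t/\eps^\alpha}(\langle a\rangle^0(\xi_s)-\aeff)\,ds$ (diffusion approximation, via the Poisson equation~\eqref{eqdefQ0} for $Q^0$ and \cite{pard:vere:01}), and $\Lambda$ is the diffusion coefficient~\eqref{eq:def_Lambda_al<2} of this limit. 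The proof of Proposition~\ref{prop:weak_conv_al<2} makes this explicit: one writes $\check r^{\eps,0}=r^\eps_{\mathrm{aux}}+\mathcal Z^\eps$ where $r^\eps_{\mathrm{aux}}=A^{\eps,0}(t)\partial_x^2 u^0$ converges by the invariance principle and $\mathcal Z^\eps\to 0$ by a separate homogenization argument (Lemma~\ref{lmm:aux_conv_al<2}).

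So your sentence ``computing the limit quadratic variation of the rescaled martingale driving $r^\eps$\ldots this is Propositions~\ref{prop:weak_conv_al<2}, \ref{prop:limit_ident_al<2}, \ref{prop:limit_ident_al>2}'' conflates two distinct limit theorems. Proposition~\ref{prop:weak_conv_al<2} is about $\check r^\eps$ and an ergodic CLT; Proposition~\ref{prop:limit_ident_al<2} says $r^\eps\to 0$ for $\alpha<2$; only Proposition~\ref{prop:limit_ident_al>2} matches your description. This is not a fatal gap in the plan---each piece is handled correctly in the paper and you cite the right results---but if you try to execute Step~3 as written you will be looking for the limit in the wrong term when $\alpha<2$.
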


Note that the values of $J_0=\lfloor\frac\alpha{2\delta}\rfloor+1$ and of $\alpha$ are related as follows:
\begin{itemize}
\item $\al < 2$ and
$$2 - \frac{2}{2J_0-1}\leq\alpha < 2- \frac{2}{2J_0+1};$$
\item $2 < \al \leq 4$, $J_0\geq 2$ and
$$2+ \frac{2}{2J_0-1}< \alpha \leq 2+\frac{2}{2J_0-3};$$
\item $\al > 4$ and $J_0=1$.
\end{itemize}
In other words, $J_0$ becomes large as $\alpha$ approaches 2.
Let us specify more precisely what happens for $q^\eps$ in the four cases: $\al < 2$, $\al < 4$, $\al = 4$ and $\al > 4$.
\begin{itemize}
\item $\alpha < 2$: $J_1=0$ and $q^\eps$ can be written as follows:
$$q^\eps=  \eps^{-\al/2} \left\{ u^\eps (x,t) - u^0(x,t) - \sum_{k=1}^{J_0} \eps^{k\delta} u^k(x,t) \right\}.$$
Here the sequence $v^k$ is not involved.
\item $2 < \alpha < 4$: $J_1=1$ and
\begin{eqnarray*}
&& q^\eps= \eps^{-\al/2} \Bigg\{ u^\eps (x,t) - u^0(x,t) - \sum_{k=1}^{J_0} \eps^{k\delta} u^k(x,t)    \\
& &\qquad  \qquad  \qquad -  \eps \left[ v^1(x,t)  + \chi^0\left( \frac{x}{\eps}\right) \partial_x u^{0} \left(x,t\right) \right]  \Bigg\}
\end{eqnarray*}
\item $\alpha = 4$: $J_0 = 1 $ and $J_1=2$. Thereby $q^\eps$ becomes
\begin{align*}
q^\eps&=\eps^{-2} \Bigg\{ u^\eps (x,t) - u^0(x,t) - \eps  \left[ v^1(x,t)  +   \chi^0\left( \frac{x}{\eps}\right) \partial_x u^{0} \left(x,t\right) \right] \\
& \qquad \quad - \eps^2 \left[ u^1(x,t) +  v^2(x,t) + \chi^0\left( \frac{x}{\eps}\right)\partial^{2}_x u^{0}(x,t)
+\chi^1\left( \frac{x}{\eps}\right) \partial_x v^{1} \left(x,t\right) \right] \Bigg\} .
\end{align*}
$\alpha =4$ is a kind of critical value, since here $u^1$ and $v^2$ coexist.
\item $\alpha > 4$: $J_0 = 1$ and for any $m\geq 2$
$$ J_1=m \Leftrightarrow 2m \leq \alpha \leq 2(m+1).$$
Hence
\begin{eqnarray*}
&& q^\eps = \eps^{-\al/2} \Bigg\{ u^\eps (x,t) - u^0(x,t) -\eps^{\delta} u^1(x,t) \\
& &\qquad  \qquad \left. - \sum_{k=1}^{J_1} \eps^{k} \left[ v^k(x,t)  +  \sum_{\ell=1}^k \chi^\ell\left( \frac{x}{\eps}\right) \partial^{\ell}_x v^{k-\ell} \left(x,t\right) \right] \right\} .
\end{eqnarray*}

\end{itemize}

\begin{remark}[When $\al=2$]
In \cite{KPP_2015}, we prove that
$$
q^\eps(x,t):= \frac{u^\eps(x,t)-u^0(x,t)}{\eps}- \chi \big(\frac{x}{\eps},\frac{t}{\eps^2}\big)\cdot\nabla u^0(x,t)
$$
converges to the SPDE
$$d q^0=\left[\mathrm{div}\Big(a^{\rm eff}\nabla q^0\Big)+\mu \frac{\partial^3}{\partial x^3}u^0\right]dt+\Lambda^{1/2}\frac{\partial^2}{\partial x^2}u^0\,dW_t.$$
\end{remark}

\begin{remark}[When $J_0=1$] \label{rem:role_u^1}
For $\al > 4$ or $\al < 4/3$, we have $\delta > \al/2$ and $J_0=1$. Thus we may remove $u^1$ in the quantity $q^\eps$: $\eps^{\delta-\al/2} u^1$ tends to zero for the strong topology and thus does not contribute directly to the limit $q^0$ of $q^\eps$. Nevertheless we emphasize that $u^1$ and $w^1$ are used to obtain the weak convergence of $q^\eps$.
\end{remark}

\begin{remark}[When $\al < 1$]\label{rem:al<1_strong_convergence}
In this range for $\al$, we have a stronger convergence and the result can be extended to any dimension $d$, that is $a$ is periodic in $z$ with the period $[0,1]^d$ and we identify periodic functions with functions defined on the torus $\mathbb T^d$.
\end{remark}

\begin{remark}[Regularity of $\imath$]\label{r_regu_ini}
The regularity assumption on $\imath$ given in condition {\rm \ref{a1}} can be weakened. Namely, the statement of Theorem \ref{thm:main_result} holds if $\imath$ is $\max(J_0+1,J_1)$ times continuously differentiable and the corresponding partial derivatives decay at infinity sufficiently fast.
\end{remark}

\begin{remark}[Dimension $d$]\label{r_dim_pb}
All results in Sections \ref{ssect:aux_pb} and \ref{sect:formal_exp} are valid if the dimension of the problem is any integer $d \geq 1$, that is if $z \in \mathbb{T}^d$. However the trick used in Section \ref{sect:conv_sing_mart_part} works only in dimension 1.
\end{remark}


\subsection{Auxiliary problems}\label{ssect:aux_pb}

In this section, we define several functions required in Theorem \ref{thm:main_result} and study their main properties.

If {\bf (A)} holds, since $\aeff$ is positive, the problem \eqref{eq:u_0} is well posed, uniquely defined, smooth and satisfies the estimates
\begin{equation}\label{eq:est_uzero}
\Big|(1+|x|)^ N\frac{\partial^{\bf k}u^0(x,t)}{\partial t^{k_0}\partial x^{k_1}}\Big|\leq C_{N,{\bf k}}
\end{equation}
for all $N>0$ and all multi index ${\bf k}=(k_0,k_1)$, $k_i\geq 0$.

Now we distinguish the two cases $\al < 2$ and $\al > 2$.

\subsubsection*{Correctors and constants for $\al < 2$}

We begin by considering Problem \eqref{eq:aux_0_dif_al<2}. This equation has a unique up to an additive constant vector periodic solution. By the classical elliptic estimates, under our standing assumptions  for any $N>0$ there exists $C_N$ such that
\begin{equation}\label{eq:est_chi_al<2}
\|\chi^0\|_{C^N(\mathbb T\times\mathbb R^n)}\leq C_N.
\end{equation}
Indeed, multiplying equation \eqref{eq:aux_0_dif_al<2} by  $\chi^0$, using the Schwarz and Poincar\'e inequalities and considering \eqref{eq:norm_cond_chi0}, the estimate follows from \cite{GT}.

Higher order correctors are defined as periodic solutions of the equations
\begin{equation}\label{eq:aux_j_dif_al<2}
\mathrm{div}_z\big(a(z,y)\nabla_z \chi^j(z,y)\big)=-\mathcal{L}_y\chi^{j-1}(z,y), \quad j=1,\,2,\ldots,J_0.
\end{equation}
Notice that $\int_{\mathbb T}\chi^{j-1}(z,y)\,dz=0$ for all  $j=1,\,2,\ldots,J^0$, thus the compatibility condition is satisfied and the equations are solvable. By the similar arguments, the solutions $\chi^j$ defined by \eqref{eq:aux_j_dif_al<2} satisfy the same estimate as $\chi^0$.

We introduce the real numbers for $k\geq 1$:
\begin{equation} \label{eq:def_aeffk_al<2}
\aeffk=\int_{\mathbb R^n}\int_{\mathbb T} \big[a(z,y)\nabla_z\chi^k(z,y)+\nabla_z\big(a(z,y)\chi^k(z,y)\big)\big]p(y)\,dzdy.
\end{equation}
Arguing as for $u^0$ we conclude that  solutions $u^j$ of problems \eqref{eq:def_u_k} and functions $w^j$ defined by  \eqref{eq:def_w_k_al<2} are smooth functions that satisfy also  estimate \eqref{eq:est_uzero}.

Now we define:
\begin{eqnarray*} \nonumber
\widehat a^0(z,y)& =& a(z,y)+a(z,y)\nabla_z\chi^0(z,y)+\nabla_z\big(a(z,y)\chi^0(z,y)\big), \\ \nonumber
\langle a\rangle^0(y)& =&\int_{\mathbb T}\big(\widehat a^0(z,y)-\aeff\big)dz,
\end{eqnarray*}
and we consider the equation
\begin{equation}\label{eqdefQ0}
\mathcal{L}Q^0(y)=\langle {a}\rangle^0(y).
\end{equation}
According to \cite[Theorems 1 and 2]{pard:vere:01}, this equation has a unique up to an additive constant solution of at most polynomial growth. The constant $\Lambda$ is defined by:
\begin{equation}\label{eq:def_Lambda_al<2}
\Lambda=\int_{\mathbb R^n}\Big[\frac{\partial}{\partial y_{r_1}}(Q^{0})(y)\Big]q^{r_1r_2}(y)
\Big[\frac{\partial}{\partial y_{r_2}}(Q^{0})(y)\Big] p(y)\,dy.
\end{equation}
Note that the matrix $\Lambda$ is non-negative. Consequently its square root $\Lambda^{1/2}$ is well defined.

Finally we define $w^j$ recursively by $w^1=0$ and
\begin{equation}\label{eq:def_w_k_al<2}
\forall k \geq 0, \quad w^{k+2}(x,t) = -  \sum_{m=0}^{k} \cC_{k,m} u^m_{xx}(x,t) - \sum_{m=1}^{k} w^{m+1}(x,t).
\end{equation}
The triangular array of constants $(\cC_{k,m})_{0\leq m \leq k}$ is defined by \eqref{eq:cC_k_m}.

\subsubsection*{Correctors and constants for $\al > 2$}

Let start here with the definition of the functions $w^j$. Here we have
\begin{equation}\label{eq:def_w_k_al>2}
\forall k \geq 0, \quad w^{k+1}(x,t) = -  \sum_{m=0}^{k} \cC_{k,m} u^m_{xx}(x,t) - \sum_{m=1}^{k} w^{m}(x,t),
\end{equation}
where the constants $(\cC_{k,m})_{0\leq m \leq k}$ are again defined by \eqref{eq:cC_k_m}.
Note that these constants are not the same if $\al > 2$ or if $\al <2$ since the correctors used in \eqref{eq:cC_k_m} are different. Somehow the function $w^k$ for $\al>2$ is equal to the function $w^{k+1}$ for $\al <2$; there is a shift between them.

Here the first auxiliary problem \eqref{eq:def_chi_0_al>2} $\bar \cA \chi^0 = -\bar a_z$ reads
\begin{equation*}
\mathrm{div}\big(\bar a(z)\nabla \chi^0(z)\big)=-\mathrm{div}\, \bar a(z),\quad z\in\mathbb T;
\end{equation*}
where
$$\bar a(z) = \int_{\R^n} a(z,y) p(y) dy.$$
It has a unique up to an additive constant periodic solution. This constant is chosen in such a way that \eqref{eq:norm_cond_chi0} holds, namely $ \int_{\mathbb T}\chi^0(z)\,dz=0$.
By the classical elliptic estimates (see \cite{GT}), under {\bf (A)}, we have
\begin{equation}\label{eq:est_chi_al>2}
\|\chi^0\|_{L^\infty(\mathbb T)} + \|\chi^0\|_{C^k(\mathbb T)}\leq C.
\end{equation}

Then we can define recursively
\begin{eqnarray*} 
f^0(z,y) & = & a-\aeff + \left( a \chi^{0}_{z} + (a \chi^{0})_z \right), \\ 
\bar \cA \chi^{1}(z) &= & \overline{f^0},\quad \\ 
f^1(z,y) &= &\chi^{0} (a-\aeff)  +  \left( a \chi^{1}_{z} + (a \chi^{1})_z \right).
\end{eqnarray*}
Note that here we use $\langle \overline{f^{0}} \rangle = 0$ to obtain the periodic solution $\chi^1$. Now we define the constant $\urma^{1,\rm eff}$ by:
\begin{equation*}
\urma^{1,\rm eff}= \langle \overline{f^{1}} \rangle
\end{equation*}
and the corrector $\chi^2$ by:
$$\ocA \chi^{2}(z) = \overline{f^{1}} - \langle\overline{ f^{1}} \rangle.$$
Let us also define by induction the following quantities for $k\geq 2$
\begin{align} \nonumber
f^k(z,y) &= &\chi^{k-1} (a-\aeff)  +  \left( a \chi^{k}_{z} + (a \chi^{k})_z \right), \\  \nonumber
\urmak & =& \langle \overline{f^{k}} \rangle, \\  \label{eq:def_chi_k_al>2}
\ocA \chi^{k+1}(z) & = & \overline{f^{k}} - \langle\overline{ f^{k}} \rangle  + \left( \sum_{j=1}^{k-1} \urma^{k-j,{\rm eff}} \chi^{j-1}  \right).
\end{align}
All functions $\chi^k$, $k\geq 1$, are solution of an equation of the form $\ocA v = F$, where $F$ is a periodic  zero mean  bounded function. Hence all functions $\chi^k$ are well defined on $\mathbb{T}$ and satisfy \eqref{eq:norm_cond_chi0} and \eqref{eq:est_chi_al>2}.

\vspace{0.5cm}
Finally from $\chi^0$, we can define $\kappa^0$ as the solution of:
\begin{equation}\label{eq:def_kappa_0_al>2}
\cL \kappa^0(z,y)= (\mathrm{div}_z a(z,y) - \mathrm{div}_z \bar a(z)) +\mathrm{div}_z ((a(z,y)  - \bar a(z))\nabla_z \chi^0(z) ),
\end{equation}
$z \in \mathbb T$ being a parameter. Since the right-hand side has a zero mean value (w.r.t. $y$) and is bounded, according to \cite{pard:vere:01}, this equation has a unique up to an additive constant (w.r.t. $y$) solution of at most polynomial growth:
\begin{equation}\label{eq:est_kappa_1}
|\kappa^0(z,y)| \leq C(1+|y|^p), \quad \forall (z,y) \in\mathbb T \times \R^d.
\end{equation}
Moreover we can impose that
\begin{equation}\label{eq:norm_kappa_1}
 \int_{\mathbb T}\kappa^0(z,y)\,dz=0.
\end{equation}
Finally the right-hand side of \eqref{eq:def_kappa_0_al>2} being a smooth function w.r.t. $z$ with bounded derivatives, again according the representation of \cite{pard:vere:01}, $z \mapsto \kappa^0(z,y)$ is smooth. Indeed the operators $\cL$ and derivative w.r.t. $z$  commute. Then by induction, we introduce a sequence $\kappa^k$, $k \geq 0$ defined by:
\begin{equation*}
 \cL \kappa^{k+1}(z,y) =  (\cA-\ocA) \chi^{k+1} + (f^{k} -  \overline{f^{k}}).
\end{equation*}
for $k=1,2,\ldots, J_1+2$. These higher order correctors $\kappa^k$ exist and satisfy Estimates \eqref{eq:est_kappa_1} and \eqref{eq:norm_kappa_1}.

\vspace{0.5cm}
We also use the following functions or constants:
\begin{align} \label{eq:def_tau_1}
\ocA \tau^0(z) & =- \overline{(\cA \kappa^0)},\\ \nonumber
g^0(z,y) & =  a(\kappa^0+\tau^0)_z + (a(\kappa^0 +\tau^0))_z, \\ \label{eq:def_d_1}
\rma^{1,\rm eff} & = \langle \overline{g^0} \rangle = \overline{\langle a(\kappa^0+\tau^0)_z \rangle},
\end{align}
Now for any $k\geq 1$
\begin{equation}\label{eq:def_tau_k}
\ocA \tau^k(z) = -\overline{\cA \gamma^{k-1}},
 \end{equation}
and
\begin{align} \label{eq:def_gamma_1}
\cL \gamma^0(z,y)& = (\cA - \ocA) \tau^0 + (\cA \kappa^0 - \overline{\cA \kappa^0}), \\ \label{eq:def_gamma_k}
\cL \gamma^k(z,y) & = (\cA - \ocA)\tau^k + (\cA \gamma^{k-1} - \overline{\cA \gamma^{k-1}}).
\end{align}
For $k\geq 2$ we put
\begin{equation} \label{eq:def_aeffk_al>2}
\aeffk = \overline{\langle  a (\tau^{k-1}(z) + \gamma^{k-2}(z,y) )_z\rangle}.
\end{equation}
By the same arguments, the set of correctors $\tau^k$ and $\gamma^k$ defined respectively by \eqref{eq:def_tau_1}, \eqref{eq:def_tau_k}, \eqref{eq:def_gamma_1} and \eqref{eq:def_gamma_k} verify again Estimates \eqref{eq:norm_cond_chi0}, \eqref{eq:est_chi_al>2}, \eqref{eq:est_kappa_1} and \eqref{eq:norm_kappa_1}. Roughly speaking, they are smooth in both variables, bounded in $z$ and show polynomial growth in $y$.

\vspace{0.5cm}
To establish Lemmata \ref{lmm:first_abs_cont_part_al_geq_4} and \ref{lem:second_abs_term}, we introduce two sequences of functions
\begin{equation*} 
g^k(z,y) = a (\tau^k + \gamma^{k-1})_z,\quad h^k(z,y) = (a (\tau^k + \gamma^{k-1}))_z
\end{equation*}
and $\eta^k(z)$ and $\zeta^k(z,y)$, $k\geq 1$, solutions of
\begin{align} \label{eq:def_eta_1}
\ocA \eta^1 & =   - (\overline{\cA \kappa^1})  - (\overline{g^0} - \langle \overline{g^0} \rangle), \\ \label{eq:def_zeta_1}
\cL \zeta^1 &=  (\cA - \ocA) \eta^1 + (\cA \kappa^1 - \overline{\cA \kappa^1})+ (g^0-\overline{g^0}),
\end{align}
and for $k\geq 2$
\begin{align} \label{eq:def_eta_k}
\ocA \eta^k & =   - (\overline{\cA \zeta^{k-1}})  - \overline{h^{k-1}} - (\overline{g^{k-1}} - \langle \overline{g^{k-1}} \rangle), \\  \label{eq:def_zeta_k}
\cL \zeta^k &=  (\cA - \ocA) \eta^k + (\cA \zeta^{k-1} - \overline{\cA \zeta^{k-1}}) \\ \nonumber
& +  (h^{k-1}-\overline{h^{k-1}}) + (g^{k-1}-\overline{g^{k-1}}).
\end{align}
Again the same arguments show that $\eta^k$ and $\zeta^k$ defined by \eqref{eq:def_eta_1}, \eqref{eq:def_zeta_1}, \eqref{eq:def_eta_k} and \eqref{eq:def_zeta_k}, exist and are smooth.

Finally the constant $\Lambda$ is defined by
\begin{equation} \label{eq:def_Lambda_al>2}
\Lambda = \overline{ \langle \chi^0 \Upsilon^0 \rangle^2} > 0,
\end{equation}
where $\Upsilon^0$ is given by:
\begin{equation*}
\Upsilon^0 (z,y) = -\kappa^0_y (z,y)q(y).
\end{equation*}
Here $\kappa^0_y$ stands for the gradient of $\kappa^0$ w.r.t. $y$ (this notation is used in the sequel of the paper).

\section{Formal expansion for the solution}\label{sect:formal_exp}

In both cases $\al >2$ and $\al<2$, we begin by constructiong a formal expansion of $u^\eps$, where the functions $w^k$ in the definition \eqref{eq:def_u_k} of $u^k$ will be left as \textit{free parameters} in this first part. This development leads to a remainder $R^\eps$
that satisfies a problem with  large parameters (as $\eps$ tends to zero) in the equation and in the initial condition. Moreover this development gives the main part of $q^\eps$.

We denote by $B$ the $n$-dimensional standard Brownian motion driving the process $\xi$.

\subsection{The case $\al < 2$}

We consider the following expression:
$$\cE^\eps(x,t) =  \sum\limits_{k=0}^{J_0}\eps^{k\delta}\Big(u^k(x,t)+\sum\limits_{j=0}^{J_0-k}\eps^{(j\delta+1)}\chi^j\Big(\frac x\eps,\xi_\frac t{\eps^\alpha}\Big)\nabla u^k(x,t)\Big)$$
where $u^k(x,t)$  and $\chi^j(z,y)$ are respectively defined in  \eqref{eq:def_u_k} and \eqref{eq:aux_j_dif_al<2}. We show that $u^\eps - \cE^\eps$ is equal to $\eps^{\al/2} R^\eps$ and we split the remainder $R^\eps$ as follows: $R^\eps=r^\eps+\check r^\eps+\widetilde r^\eps + \rho^\eps$, such that $\widetilde r^\eps $ and $\rho^\eps$ strongly converge to zero and $\check r^\eps$ converges to $q^0$ (Proposition \ref{prop:weak_conv_al<2}). The first term $r^\eps$ contains large martingale terms and its weak convergence to zero is proved in Section \ref{sect:conv_sing_mart_part}. Note that for $\al<2$, there is no term $\widehat r^\eps$.

For $k\geq 1$ we define
\begin{align} \label{eq:def_widehat_a_k_al<2}
\widehat{a}^k(z,y) & = {a}(z,y)\nabla_z\chi^k(z,y)+\nabla_z\big({a}(z,y)\chi^k(z,y)\big), \\ \label{eq:def_mean_widehat_a_k_al<2}
\langle {a}\rangle^k(y)& = \int_{\mathbb T}\big(\widehat { a}^k(z,y)-a^{k,{\rm eff}}\big)dz.
\end{align}
\begin{proposition} \label{prop:formal_exp_al<2}
If we define:
$$R^\eps(x,t) =\eps^{-\alpha/2}\left[ u^\eps(x,t)-\cE^\eps (x,t) \right], $$
then $R^\eps$ is the sum $R^\eps=r^\eps+\check r^\eps+\widetilde r^\eps + \rho^\eps$ where:
\begin{itemize}
\item The dynamics of $r^\eps$ contains the terms with large parameters:
\begin{eqnarray} \label{eq:dyn_r_eps_al<2}
&&dr^\eps = (\cA^\eps r^\eps) dt-\eps^{-\al/2} \sum\limits_{k=1}^{J_0} \eps^{k\delta} w^k(x,t) \ dt  \\ \nonumber
&&\quad -\eps^{1-\alpha} \sum\limits_{k=0}^{J_0}\sum\limits_{j=0}^{J_0-k}\eps^{(k+j)\delta}\sigma(\xi_\frac t{\eps^\alpha})\nabla_y\chi^j\Big(\frac x\eps,\xi_\frac t{\eps^\alpha}\Big)\nabla u^k(x,t)\,dB_t
\end{eqnarray}
with $r^\eps(x,0)=0$.
\item The dynamics of $\check r^\eps$ is given by:
\begin{equation} \label{eq:dyn_check_r_eps_al<2}
\partial_t \check r^\eps- \cA^\eps \check r^\eps = \eps^{-\alpha/2}\sum\limits_{j=0}^{J_0}\sum\limits_{k=0}^{J_0-j}\eps^{(k+j)\delta}\Big[\widehat {\rm a}^k\Big(\frac x\eps, \xi_\frac t{\eps^\alpha}\Big)-a^{k,{\rm eff}}\Big] \frac{\partial^2 u^j}{\partial x^2}
\end{equation}
with $\check r^\eps (x,0)=0$.
\item The last terms $\widetilde r^\eps$ and $\rho^\eps$ satisfy:
\begin{equation} \label{eq:estim_L_2_al<2}
\mathbf{E}\| \widetilde r^\eps\|^2_{L^2(\mathbb R\times(0,T))} + \mathbf{E}\| \rho^\eps\|^2_{L^2(\mathbb R\times(0,T))}\leq C\eps^\delta.
\end{equation}
\end{itemize}
\end{proposition}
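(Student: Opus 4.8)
The plan is to substitute the ansatz $\cE^\eps$ into the equation for $u^\eps$ and to analyze $(\partial_t-\cA^\eps)(u^\eps-\cE^\eps)=-(\partial_t-\cA^\eps)\cE^\eps$, together with the mismatch $\imath-\cE^\eps(\cdot,0)$ of the initial data. Two calculus operations are involved. Since $\cE^\eps$ depends on time both through the functions $u^k(x,t)$ and, via the correctors, through $\xi_{t/\eps^\alpha}$, one applies Itô's formula to each term $\chi^j(x/\eps,\xi_{t/\eps^\alpha})$; using that $\xi_{t/\eps^\alpha}$ is a diffusion with generator $\eps^{-\alpha}\cL$, this produces a drift contribution $\eps^{-\alpha}(\cL_y\chi^j)$ and a martingale contribution $\eps^{-\alpha/2}(\sigma\nabla_y\chi^j)\,dB_t$. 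Expanding $\cA^\eps$ by the chain rule on products $\chi^j(x/\eps)\nabla u^k(x)$ generates terms of orders $\eps^{-2}$ (the pure fast part $\mathrm{div}_z(a\nabla_z\,\cdot\,)$), $\eps^{-1}$ (the cross terms) and $\eps^{0}$. Collecting everything, $(\partial_t-\cA^\eps)\cE^\eps$ becomes a finite sum indexed by the powers $\eps^{m\delta+n}$ and by the type of term, while $\imath-\cE^\eps(\cdot,0)=-\sum_{k,j}\eps^{(k+j)\delta+1}\chi^j(x/\eps,\xi_0)\nabla u^k(x,0)$.

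The cancellations are then carried out in three steps. First, because $1-\alpha=\delta-1$, the drift $\eps^{-\alpha}\cL_y\chi^{j-1}$ and the $\eps^{-2}$-term $\mathrm{div}_z(a\nabla_z\chi^{j})$ sit on the same power of $\eps$ and cancel in matched pairs along the corrector hierarchy \eqref{eq:aux_0_dif_al<2}--\eqref{eq:aux_j_dif_al<2} (at the bottom, the $\chi^0$-pair also absorbs the term $\eps^{-1}\,\mathrm{div}_z a\,\nabla u^k$ coming from $\cA^\eps u^k$); the only unpaired survivors are the top terms $\eps^{(J_0+1)\delta-1}\cL_y\chi^{J_0-k}\,\nabla u^k$. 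Second, at order $\eps^0$ one matches, level by level, the term $\partial_t u^k$ against the $(z,y)$-mean of the surviving $\cA^\eps$-contributions; by the definition \eqref{eq:def_aeffk_al<2} of $\aeffk$, by the identity $\overline{\langle\widehat a^0\rangle}=\aeff$ (see \eqref{eq:def_aeff}) and by the equation \eqref{eq:def_u_k} for $u^k$, these cancel, leaving only the artificially inserted $-\sum_{k\ge1}\eps^{k\delta}w^k$, which after the $\eps^{-\alpha/2}$ normalization is exactly the deterministic source of $r^\eps$ in \eqref{eq:dyn_r_eps_al<2}. Third, what remains of order $\eps^0$ are the mean-zero fluctuations $[\widehat a^k(x/\eps,\xi_{t/\eps^\alpha})-\aeffk]\,u^j_{xx}$, with $\widehat a^k$, $\langle a\rangle^k$ as in \eqref{eq:def_widehat_a_k_al<2}--\eqref{eq:def_mean_widehat_a_k_al<2}, which after normalization are exactly the source of $\check r^\eps$ in \eqref{eq:dyn_check_r_eps_al<2}, while the martingale contributions collected from the Itô corrections, carrying the prefactor $\eps^{1-\alpha}\sum\eps^{(k+j)\delta}$ after normalization, form the last term of \eqref{eq:dyn_r_eps_al<2}.

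Everything not matched above is collected into $\widetilde r^\eps$ and $\rho^\eps$, both of which will be shown to converge strongly to zero. The term $\widetilde r^\eps$ solves a parabolic equation with zero initial data whose source consists of the pieces of order $\eps^{(k+j)\delta+1}$ (schematically $a(x/\eps,\cdot)\,\chi^j(x/\eps)\,\partial_x^{3}u^k$) and the unpaired survivors $\eps^{(J_0+1)\delta-1}\cL_y\chi^{J_0-k}\,\nabla u^k$; the latter are mean-zero both over $\mathbb{T}$ and with respect to the stationary law of $\xi$, so that, exploiting the $z$-cell problems and the ergodic structure of $\xi$ (iteratively, the number of iterations being controlled by $J_0$, whence the requirement $\delta J_0>\alpha/2$), their contribution to the solution is of order $\eps^{\delta/2}$ in $L^2$. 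The term $\rho^\eps$ solves $\partial_t\rho^\eps=\cA^\eps\rho^\eps$ with $\rho^\eps(\cdot,0)=\eps^{-\alpha/2}[\imath-\cE^\eps(\cdot,0)]$; since $u^k(\cdot,0)=0$ for $k\ge1$ this initial datum is $O(\eps^{1-\alpha/2})=O(\eps^{\delta/2})$ in $L^2(\R)$, and a pathwise parabolic contraction estimate (the coefficient $a(x/\eps,\xi_{t/\eps^\alpha})$ is uniformly elliptic for every realization by \ref{a3}) propagates the same bound to $\R\times(0,T)$. Together with the uniform bounds \eqref{eq:est_uzero} on $u^0,u^k,w^k$, the corrector bounds \eqref{eq:est_chi_al<2}, and the moment bounds for $\xi$ from its rapidly decaying invariant density $p$, this yields \eqref{eq:estim_L_2_al<2}.

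I expect the main obstacle to be the combinatorial bookkeeping of the expansion: one must check that, after the three cancellation steps, the terms surviving at a power $\le\eps^{\alpha/2}$ (after the $\eps^{-\alpha/2}$ normalization) are precisely the four groups listed, and that the remainders $\widetilde r^\eps,\rho^\eps$ are indeed strongly small — this is exactly what forces the truncation index to grow like $J_0\sim\alpha/(2\delta)$ as $\alpha\to2$, so that finitely many corrector steps suffice. A secondary point is that the martingale source in \eqref{eq:dyn_r_eps_al<2} is not itself small (its prefactor $\eps^{1-\alpha}$ blows up when $\alpha>1$): it is deliberately left in the dynamics of $r^\eps$ and controlled only afterwards, in Section~\ref{sect:conv_sing_mart_part}, by the still-free functions $w^k$.
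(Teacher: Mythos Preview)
Your approach matches the paper's: apply It\^o's formula to $\cE^\eps$, telescope the $\eps^{-1}$ and $\eps^{1-\alpha}$ drifts via \eqref{eq:aux_0_dif_al<2}--\eqref{eq:aux_j_dif_al<2}, match the order-$\eps^0$ means against $\partial_t u^k$ using \eqref{eq:def_u_k} and \eqref{eq:def_aeffk_al<2}, and split the result by linearity into $r^\eps,\check r^\eps,\widetilde r^\eps,\rho^\eps$; your treatment of $\rho^\eps$ (parabolic contraction from an $O(\eps^{1-\alpha/2})$ initial datum, using $u^k(\cdot,0)=0$ for $k\ge1$) is exactly what the paper does.

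One clarification about the unpaired survivor $\eps^{-\alpha/2}\eps^{(J_0+1)\delta-1}(\cL_y\chi^{J_0-k})\nabla u^k$. You propose to control it by an ``iterative'' argument mixing $z$-cell problems and ergodicity in $\xi$, with the number of iterations tied to $J_0$; that is not the mechanism, and as stated it does not give what you claim. The paper simply lumps this term, together with the $\chi^j\partial_t\nabla u^k$ and $a\chi^j\partial_x^3 u^k$ leftovers, into a generic source $\eps^{1-\alpha/2}\sum_{j}\eps^{j\delta}\mathfrak b^j\mathfrak F^j$ and bounds it directly in $L^2$. If one wants this step to go through uniformly over the whole range $\alpha\in(0,2)$, the clean argument for the $\cL_y\chi^{J_0-k}$ piece is a \emph{single} integration by parts in $x$, using $\langle\cL_y\chi^{J_0-k}(\cdot,y)\rangle_{\mathbb T}=0$, which gains one factor of $\eps$ and places the term at order $\eps^{(J_0+1)\delta-\alpha/2}>\eps^{\delta}$ in $L^2(0,T;H^{-1}(\R))$ --- enough for the parabolic estimate behind \eqref{eq:estim_L_2_al<2}. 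No $y$-ergodicity and no iteration are needed here; the CLT/ergodic machinery you allude to belongs to the analysis of $\check r^\eps$ in Proposition~\ref{prop:weak_conv_al<2}, not to the present proposition.
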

\begin{proof}
We substitute $R^\eps$ for $u^\eps$ in \eqref{ori_cauch} using It\^o's formula:
\begin{align*}
& d R^\eps-\mathrm{div} \big[a\big(\frac x\eps,\xi_{\frac t{\eps^\alpha}}\big)\nabla R^\eps\big]dt \\ \nonumber
& =-\eps^{-\frac\alpha2}\sum\limits_{k=0}^{J_0}\eps^{k\delta}\Big[\partial_tu^k
+\sum\limits_{j=0}^{J_0-k}\eps^{(j\delta+1-\alpha)}\big(\cL_y \chi^j\big)\big(\frac x\eps,\xi_{\frac t{\eps^\alpha}}\big)\nabla u^k \\ \nonumber
&\qquad  \qquad \qquad \qquad +\sum\limits_{j=0}^{J_0-k}\eps^{(j\delta+1)}\chi^j\big(\frac x\eps,\xi_{\frac t{\eps^\alpha}}\big)\partial_t\nabla u^k
\Big] dt \\ \nonumber
&-\sum\limits_{k=0}^{J_0}\sum\limits_{j=0}^{J_0-k}\eps^{(1-\alpha+(k+j)\delta)}\sigma(\xi_\frac t{\eps^\alpha})\nabla_y\chi^j\Big(\frac x\eps,\xi_\frac t{\eps^\alpha}\Big)\nabla u^k(x,t)\,dB_t\\ \nonumber
&+\eps^{-\frac\alpha2}\sum\limits_{k=0}^{J_0}\eps^{k\delta-1}\Big[(\mathrm{div} a)\big(\frac x\eps,\xi_{\frac t{\eps^\alpha}}\big)
+\sum\limits_{j=0}^{J_0-k}\eps^{j\delta}\big(\mathrm{div}(a\nabla\chi^j)\big)\big(\frac x\eps,\xi_{\frac t{\eps^\alpha}}\big)\Big]
\nabla u^k\\ \nonumber
& +\eps^{-\frac\alpha2}\sum\limits_{k=0}^{J_0}\sum\limits_{j=0}^{J_0-k}\eps^{(k+j)\delta}\ \widehat a^{j,im}\big(\frac x\eps,\xi_{\frac t{\eps^\alpha}}\big)
\frac{\partial^2}{\partial x_i\partial x_m}u^k dt\\ \nonumber
&+\eps^{-\frac\alpha2}\sum\limits_{k=0}^{J_0}\sum\limits_{j=0}^{J_0-k}\eps^{(k+j)\delta+1}\ (a^{im}\chi^{j,l})\big(\frac x\eps,\xi_{\frac t{\eps^\alpha}}\big)
\frac{\partial^3}{\partial x_i\partial x_m\partial x_l}u^k dt.
\end{align*}
Due to \eqref{eq:aux_0_dif_al<2} and \eqref{eq:aux_j_dif_al<2}
\begin{align*}
& -\sum\limits_{k=0}^{J_0}\eps^{k\delta} \sum\limits_{j=0}^{J_0-k}\eps^{(j\delta+1-\alpha)}\big(\cL_y\chi^j\big)\big(\frac x\eps,\xi_{\frac t{\eps^\alpha}}\big)\nabla u^k\\
&\quad +\sum\limits_{k=0}^{J_0}\eps^{k\delta-1}\Big[(\mathrm{div} a)\big(\frac x\eps,\xi_{\frac t{\eps^\alpha}}\big)
+\sum\limits_{j=0}^{J_0-k}\eps^{j\delta}\big(\mathrm{div}(a\nabla\chi^j)\big)\big(\frac x\eps,\xi_{\frac t{\eps^\alpha}}\big)\Big]
\nabla u^k \\
&=-\eps^{(J_0+1)\delta-1}\sum\limits_{k=0}^{J_0}\big(\cL_y\chi^{J_0-k}\big)\big(\frac x\eps,\xi_{\frac t{\eps^\alpha}}\big)\nabla u^k.
\end{align*}
Considering equations \eqref{eq:def_u_k} and the definitions of $a^{k,{\rm eff}}$ and $\widehat{\rm a}^k(z,y)$,  we obtain
\begin{eqnarray}\label{eq:prob_V_dif_2_al<2}
&& d R^\eps\displaystyle(x,t)-\mathrm{div} \Big[{\rm a}\Big(\frac x\eps,\xi_\frac t{\eps^\alpha}\Big)\nabla R^\eps\Big]\,dt\\ \nonumber
&&\quad = \Big(\eps^{-\alpha/2}\sum\limits_{j=0}^{J_0}\sum\limits_{k=0}^{J_0-j}\eps^{(k+j)\delta}\Big[\widehat {\rm a}^k\Big(\frac x\eps, \xi_\frac t{\eps^\alpha}\Big)-a^{k,{\rm eff}}\Big]^{im} \frac{\partial^2 u^j}{\partial x_i\partial x_m}\Big)\,dt\\ \nonumber
&&\quad - \sum\limits_{k=0}^{J_0}\sum\limits_{j=0}^{J_0-k}\eps^{(1-\alpha+(k+j)\delta)}\sigma(\xi_\frac t{\eps^\alpha}) \nabla_y\chi^j\Big(\frac x\eps,\xi_\frac t{\eps^\alpha}\Big)\nabla u^k(x,t)\,dB_t\\ \nonumber
&&\quad -  \sum\limits_{k=1}^{J_0} \eps^{k\delta-\alpha/2} w^k(x,t) \ dt \\ \nonumber
&&\quad +\eps^{1-\alpha/2} \sum\limits_{j=0}^{J_0}\eps^{j\delta}\mathfrak b^j\Big(\frac x\eps,\xi_\frac t{\eps^\alpha}\Big)\mathfrak F^j(x,t) \,dt,
\end{eqnarray}
with $\rma^{0,{\rm eff}} = \aeff$ and with periodic in $z$ smooth functions $\mathfrak b^j=\mathfrak b^j(z,y)$ of at most polynomial growth in $y$, and $\mathfrak F^j$ satisfying \eqref{eq:est_uzero}, that is
$$\left| (1+|x|)^{N} D^{\bf k}\mathfrak F^j \right| \leq C_{{\bf k},N}.$$
The initial condition for $R^\eps$ is given by:
$$R^\eps(x,0)=\eps^{1-\al/2} \sum\limits_{k=0}^{J_0}\sum\limits_{j=0}^{J_0-k}\eps^{j\delta}\chi^j\Big(\frac x\eps,\xi_0\Big)\nabla u^k(x,0).$$

By the linearity of \eqref{eq:prob_V_dif_2_al<2}, we represent $R^\eps$ as the sum $R^\eps=r^\eps+\check r^\eps+\widetilde r^\eps + \rho^\eps$ where $r^\eps$ and $\check r^\eps$ are given by \eqref{eq:dyn_r_eps_al<2} and \eqref{eq:dyn_check_r_eps_al<2}, and $\widetilde r^\eps$ contains all negligible terms:
\begin{equation*}
\partial_t \widetilde r^\eps- \cA^\eps \widetilde r =\eps^{1-\alpha/2} \sum\limits_{j=0}^{J_0}\eps^{j\delta}\mathfrak b^j\Big(\frac x\eps,\xi_\frac t{\eps^\alpha}\Big)\mathfrak F^j(x,t) = \cB^\eps(x,t)
\end{equation*}
together with $\widetilde r^\eps(x,0)=0$. For the right-hand side here we have the following estimate:
\begin{eqnarray*}
{\bf E}\|\mathcal{B}^\eps\|^2_{L^2(\mathbb R \times(0,T))} & \leq & C\eps^{1-\alpha/2}\int_0^T\int_{\mathbb R}\int_{\mathbb R^n}
(1+|y|)^{N_1}(1+|x|)^{-2n}p(y)\,dydxdt \\
& \leq & C \eps^{1-\alpha/2}.
\end{eqnarray*}
Similarly, ${\bf E}\|\widetilde r^\eps(\cdot,0)\|^2_{L^2(\mathbb R)}\leq C\eps^{1-\alpha/2}$. Therefore, $ \widetilde r^\eps$ satisfies \eqref{eq:estim_L_2_al<2} and thus does not contribute in the limit. Finally $\rho^\eps$ satisfies the evolution equation
$$\partial_t \rho^\eps =\cA^\eps \rho^\eps$$
with the initial condition $\rho^\eps(x,0)=R^\eps(x,0)$. Since $\al < 2$, we deduce that ${\bf E}\|\rho^\eps(\cdot,0)\|^2_{L^2(\mathbb R)}\leq C\eps^{1-\alpha/2}$. Thereby this term does not contribute in the limit as well.
\end{proof}

The second term $\check r^\eps$ gives the limit in Theorem \ref{thm:main_result}.
\begin{proposition} \label{prop:weak_conv_al<2}
The solution $\check r^\eps$ of Problem \eqref{eq:dyn_check_r_eps_al<2} converges in law, as $\eps$ goes to $0$, in $L^2(\mathbb R\times(0,T))$
equipped with strong topology, to the solution of \eqref{eq:eff_spde}.
\end{proposition}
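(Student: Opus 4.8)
The plan is to read \eqref{eq:dyn_check_r_eps_al<2} as a diffusion--approximation problem. Although the source carries the large factor $\eps^{-\al/2}$, it is a rapidly oscillating mean--zero functional, and the Poisson equation \eqref{eqdefQ0} converts it into an order--one martingale plus negligible remainders. For each summand (with $k+j=m$) use \eqref{eq:def_widehat_a_k_al<2}--\eqref{eq:def_mean_widehat_a_k_al<2} to split
\[
\widehat a^k\big(\tfrac x\eps,\xi_{t/\eps^\al}\big)-a^{k,\rm eff}=\big(\widehat a^k-\langle\widehat a^k\rangle\big)\big(\tfrac x\eps,\xi_{t/\eps^\al}\big)+\langle a\rangle^k(\xi_{t/\eps^\al}).
\]
The first, spatially oscillating and torus--mean--zero, part equals, in dimension one, $\eps\,\partial_x[\,\cdot\,]-\eps(\,\cdot\,)$ with bounded $z$--periodic coefficients, so the corresponding term of \eqref{eq:dyn_check_r_eps_al<2} is $O(\eps^{m\delta+1-\al/2})=O(\eps^{1-\al/2})$ (as $\al<2$) in $L^2(0,T;H^{-1}(\R))$, hence drops out. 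The second part has zero $p$--mean (since $a^{k,\rm eff}=\overline{\langle\widehat a^k\rangle}$), so by \cite{pard:vere:01} it equals $\cL Q^k$ for a corrector $Q^k$ of at most polynomial growth ($Q^0$ being the one of \eqref{eqdefQ0}), and It\^o's formula applied to $Q^k(\xi_{t/\eps^\al})$ gives
\begin{multline*}
\eps^{m\delta-\al/2}\langle a\rangle^k(\xi_{t/\eps^\al})\,\partial_x^2 u^j\,dt=d\big[\eps^{m\delta+\al/2}Q^k(\xi_{t/\eps^\al})\,\partial_x^2 u^j\big]\\
-\eps^{m\delta}\,(\sigma\nabla_y Q^k)(\xi_{t/\eps^\al})\,\partial_x^2 u^j\,dB_t-\eps^{m\delta+\al/2}Q^k(\xi_{t/\eps^\al})\,\partial_t\partial_x^2 u^j\,dt.
\end{multline*}
Only the term $m=0$, $k=j=0$ produces a non--vanishing martingale; every other contribution, after this rewriting, carries a positive power of $\eps$ (in $L^2$ or in $L^2(0,T;H^{-1}(\R))$), the bookkeeping being consistent with the relation $\delta J_0>\al/2$ built into $J_0$.

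Subtract from $\check r^\eps$ the finite sum of correctors $\eps^{m\delta+\al/2}Q^k(\xi_{t/\eps^\al})\partial_x^2 u^j$ to obtain a corrected remainder $\check s^\eps$. Since $Q^k(\xi_{t/\eps^\al})$ does not depend on $x$, $\cA^\eps$ applied to such a corrector is $\partial_x$ of a flux of size $O(\eps^{m\delta+\al/2})$ in $L^2$ --- the factor $\eps^{-1}$ coming from $\partial_x a(\tfrac x\eps,\cdot)$ remaining absorbed inside the divergence --- so $\check s^\eps$ solves a parabolic problem whose finite--variation part has drift bounded in $L^2(\Omega;L^2(0,T;H^{-1}(\R)))$, whose It\^o part is bounded in $L^2(\Omega;L^2(0,T;L^2(\R)))$, and with $\check s^\eps(\cdot,0)=O(\eps^{\al/2})$ in $L^2$. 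A standard energy estimate then gives $\mathbf{E}\|\check s^\eps\|_{L^2(0,T;H^1(\R))}^2\le C$, and, the subtracted correctors being $O(\eps^{\al/2})$ in $L^2(\Omega\times\R\times(0,T))$ (polynomial growth of $Q^k$, fast decay of $p$, and \eqref{eq:est_uzero}), also $\mathbf{E}\|\check r^\eps\|_{L^2(0,T;H^1(\R))}^2\le C$, uniformly in $\eps$. Combining the Aubin--Lions lemma for the finite--variation part, the spatial smoothness of the martingale part (whose only non--negligible piece is $x$--independent up to the factor $\partial_x^2 u^0$), and the uniform decay in $x$ inherited from $\partial_x^2 u^j$, one obtains tightness of the laws of $\check r^\eps$ in $L^2(\R\times(0,T))$ equipped with the strong topology.

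To identify the limit, fix $\phi\in C^\infty_0(\R)$ and apply It\^o's formula to $t\mapsto\int_\R\check r^\eps(x,t)\,\phi^\eps(x,t)\,dx$, where $\phi^\eps(x,t)=\phi(x)+\eps\,\chi^0\big(\tfrac x\eps,\xi_{t/\eps^\al}\big)\phi'(x)+\cdots$ is the oscillating test function built from the correctors \eqref{eq:aux_0_dif_al<2}, \eqref{eq:aux_j_dif_al<2}. The corrector hierarchy makes $\cA^\eps\phi^\eps+\partial_t\phi^\eps$ coincide with $\mathrm{div}(\aeff\nabla\phi)$ up to terms that vanish once tested against $\check r^\eps$ --- this is where $\al<2$ enters crucially, e.g.\ the identity $(\cL_y\chi^0)\big(\tfrac x\eps,\cdot\big)=-\eps\,\partial_x\big[(a\,\partial_z\chi^1)\big(\tfrac x\eps,\cdot\big)\big]$ turns the $\eps^{1-\al}$ prefactor in $d\phi^\eps$ into $\eps^{2-\al}$ times bounded quantities, and the $\eps^{1-\al/2}$ It\^o term produced by $d\phi^\eps$ is likewise negligible. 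Using the rewriting of the source of the first paragraph and $\check r^\eps(\cdot,0)=0$, one obtains that
\begin{multline*}
\int_\R\check r^\eps(x,t)\,\phi(x)\,dx-\int_0^t\int_\R\check r^\eps(x,s)\,\mathrm{div}(\aeff\nabla\phi)(x)\,dx\,ds\\
+\int_0^t\Big(\int_\R\phi(x)\,\partial_x^2 u^0(x,s)\,dx\Big)\,(\sigma\nabla_y Q^0)(\xi_{s/\eps^\al})\,dB_s
\end{multline*}
tends to $0$ in $L^2(\Omega)$, uniformly in $t\in[0,T]$. The last term is a continuous martingale whose bracket converges in probability --- by the ergodic theorem for $\xi$ and since $q=\sigma^*\sigma$ --- to $\Lambda\int_0^t\big(\int_\R\phi\,\partial_x^2 u^0(x,s)\,dx\big)^2\,ds$, with $\Lambda$ as in \eqref{eq:def_Lambda_al<2}; by the martingale central limit theorem it converges in law, jointly over finitely many $\phi$ and jointly with the tight family $\check r^\eps$, to $\Lambda^{1/2}\int_0^t\big(\int_\R\phi\,\partial_x^2 u^0(x,s)\,dx\big)\,dW_s$ for a single standard one--dimensional Brownian motion $W$ (one $W$ suffices because all these martingales are stochastic integrals of deterministic integrands against the common scalar noise $(\sigma\nabla_y Q^0)(\xi_{s/\eps^\al})\,dB_s$). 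Hence every subsequential limit $\check r$ of $\check r^\eps$ satisfies the weak formulation of \eqref{eq:eff_spde}.

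Since \eqref{eq:eff_spde} is a linear parabolic SPDE with additive noise of deterministic intensity, its weak solution is unique, equal to the stochastic convolution $t\mapsto\Lambda^{1/2}\int_0^t e^{(t-s)\,\mathrm{div}(\aeff\nabla)}\,\partial_x^2 u^0(\cdot,s)\,dW_s$; with the tightness above this upgrades the convergence to convergence in law of the whole family $\check r^\eps$ in the strong topology of $L^2(\R\times(0,T))$, as claimed. I expect the main obstacle to be the second paragraph: the genuinely large drift --- the temporal, $m=0$ piece --- has no $x$--divergence structure, so it must be removed via the Poisson corrector $Q^0$, after which one must check that the feedback of the highly oscillatory operator $\cA^\eps$ onto the resulting corrector, which a priori blows up like $\eps^{-1}$, is in fact negligible because it is an exact $x$--divergence of a small flux. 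This is exactly what makes the argument work for $\al<2$ and why it must be carried out in the $H^{-1}$/weak--in--$x$ formulation even though the final convergence is strong in $L^2(\R\times(0,T))$; a secondary, purely combinatorial point is to verify, uniformly in $\al<2$, that every $(j,k)$--summand with $j+k\ge1$ is negligible, which uses $\delta J_0>\al/2$.
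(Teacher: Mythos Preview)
Your approach is correct but takes a genuinely different route from the paper. You and the paper share the first step --- splitting $\widehat a^k-a^{k,\rm eff}$ into a spatially oscillating, torus--mean--zero part (handled by the divergence trick in $H^{-1}$) and a temporally oscillating, $p$--mean--zero part $\langle a\rangle^k(\xi_{t/\eps^\al})$. From there you diverge. You convert the temporal part via the Poisson corrector $Q^k$ and It\^o's formula into an order--one martingale plus small remainders, subtract the correctors $\eps^{m\delta+\al/2}Q^k(\xi_{t/\eps^\al})\partial_x^2 u^j$, derive uniform energy bounds, argue tightness in strong $L^2$ by Aubin--Lions--type compactness, and identify the limit through the martingale problem with oscillating test functions and the martingale CLT. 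The paper instead invokes the functional CLT directly on $A^{\eps,0}(t)=\eps^{\al/2}\int_0^{t/\eps^\al}\langle a\rangle^0(\xi_s)\,ds$, observes that the auxiliary problem with $\aeff$ in place of $a^\eps$ has the \emph{explicit} solution $r^\eps_{\rm aux}=A^{\eps,0}(t)\,\partial_x^2 u^0$ (whose convergence in $C([0,T];L^2(\R))$ is then immediate from the convergence of $A^{\eps,0}$), and reduces everything to showing that the difference $\mathcal{Z}^\eps=\check r^{\eps,0}-r^\eps_{\rm aux}$ tends to zero in probability in $L^2$; the latter is a purely deterministic homogenization statement (Lemma~\ref{lmm:aux_conv_al<2}) proved by oscillating test functions and compactness for a fixed right--hand side. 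The paper's route buys you a clean separation: the stochastic limit is read off from the explicit auxiliary solution, and the passage from oscillating to effective operator is handled once, deterministically, without ever needing tightness of $\check r^\eps$ itself. Your route is more self--contained and closer in spirit to a direct martingale--problem argument; its weakest link is the tightness paragraph, where the Aubin--Lions step for the corrected $\check s^\eps$ (which is now a genuine semimartingale, not an absolutely continuous function of $t$) deserves a more careful statement, though it can certainly be made rigorous.
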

\begin{proof}
Recall that from the very definition of $\widehat a_k$ in \eqref{eq:def_widehat_a_k_al<2} and $\langle{\rm a}\rangle^k$  in \eqref{eq:def_mean_widehat_a_k_al<2} the problem
$$
\Delta_z\zeta^{k}(z,y)=   (\widehat a^k(z,y)-\langle a\rangle^k(y))
$$
has a periodic solution which is unique up to an additive constant. Letting $\Theta^{k}(z,y)=\nabla\zeta^{k}(z,y)$, we obtain a vector functions $\Theta^{k}$ such that
$$
\mathrm{div}\,\Theta^{k}(z,y)=(\widehat a^k(z,y)-\langle a\rangle^k(y)), \qquad \|\Theta^{k}\|^N_{C^\ell(\mathbb T\times\R^n)}
\leq C_{N,\ell}.
$$
It is then  straightforward to check that for the functions
\begin{eqnarray*}
H^\eps(x,t) & = & \eps^{-\alpha/2}\sum\limits_{j=0}^{J_0}\sum\limits_{k=0}^{J_0-j}\eps^{(k+j)\delta}\Big[\widehat {\rm a}^k\Big(\frac x\eps, \xi_\frac t{\eps^\alpha}\Big)-\langle{\rm a}\rangle^k\big(\xi_\frac t{\eps^\alpha}\big)\Big]
\frac{\partial^2 u^j}{\partial x^2} \\
& = & \eps^{1-\frac\alpha2}\sum\limits_{j=0}^{J_0}\sum\limits_{k=0}^{J_0-j}\eps^{(k+j)\delta}\Big\{
\mathrm{div}\Big[\Theta^{k}\Big(\frac x\eps,\frac t{\eps^\alpha}\Big)\frac{\partial^2}{\partial x^2}u^j(x,t)\Big] \\
&& \qquad-\Theta^{k}\Big(\frac x\eps,\frac t{\eps^\alpha}\Big)\nabla\Big(\frac{\partial^2}{\partial x^2}u^j(x,t)\Big)\Big\}
\end{eqnarray*}
the following estimate is fulfilled:
\begin{equation*} 
{\bf E}\|H^{\eps}\|^2_{L^2(0,T; H^{-1}(\mathbb R))}\leq C\eps^{2-\alpha}.
\end{equation*}
We split $\check r^\eps =\check r^{\eps,1}+\check r^{\eps,2}$, where
\begin{itemize}
\item $\check r^{\eps,1}$ solves \eqref{eq:dyn_check_r_eps_al<2} with $H^\eps$ on the right-hand side; it admits the estimate:
$$\mathbf{E}\|\check r^{\eps,1}\|^2_{L^2(0,T;H^1(\mathbb R))}\leq C\eps^\delta.$$
\item $\check r^{\eps,2}$ solves also \eqref{eq:dyn_check_r_eps_al<2}, but with
\begin{equation}\label{def_check r2_ap}
\eps^{-\alpha/2}\sum\limits_{j=0}^{J_0}\sum\limits_{k=0}^{J_0-j}\eps^{(k+j)\delta}\Big[\langle{\rm a}\rangle^k\big(\xi_\frac t{\eps^\alpha}\big)-a^{k,{\rm eff}}\Big] \frac{\partial^2 u^j}{\partial x^2}
\end{equation}
on the right-hand side.
\end{itemize}
According to Assumption \ref{a6} and to \cite[Theorem 3]{pard:vere:01} (see also \cite[Lemma VIII.3.102 and Theorem  VIII.3.97]{JaShi}), the processes
$$
A^k(t)=\int_0^t (\langle a\rangle^k(\xi_s)- a^{k,{\rm eff}} )ds
$$
satisfy the functional central limit theorem (invariance principle), that is the process
$$
A^{\eps,k}(t)=\eps^{\frac\alpha2}\int_0^{\eps^{-\alpha}t} (\langle a\rangle^k(\xi_s)- a^{k,{\rm eff}} )ds
$$
converges in law in $C([0,T]; \R)$ to a one-dimensional Brownian motion with variance coefficient
$$
(\Lambda_k)=\int_{\mathbb R^n}\Big[\frac{\partial}{\partial y_{r_1}}(Q^{k})(y)\Big]q^{r_1r_2}(y)
\Big[\frac{\partial}{\partial y_{r_2}}(Q^{k})(y)\Big] \rho(y)\,dy.
$$
with $Q^0$ defined in \eqref{eqdefQ0} and $Q^k$ given by
\begin{equation*}
\mathcal{L}Q^k(y)=\langle {\rm a}\rangle^k(y), \qquad k=1,\ldots.
\end{equation*}
In the expression of $\Lambda_k$ the summation over the indices $r_1$ and $r_2$ is assumed.

Denote by $\check r^{\eps,0}$ the solution of the following problem
\begin{equation*}
\partial_t \check r^{\eps,0} - \cA^\eps \check r^{\eps,0} = \eps^{-\alpha/2}\Big[\langle{\rm a}\rangle^0\big(\xi_\frac t{\eps^\alpha}\big)-\aeff \Big] \frac{\partial^2 u^0}{\partial x^2}.
\end{equation*}
Obviously if $\check r^{\eps,0} $ converges, then $\check r^{\eps,2}$ solving the equation with the right-hand side defined in \eqref{def_check r2_ap} also converges to the same limit. We consider an auxiliary problem
\begin{equation*}
\left\{\begin{array}{l}
\displaystyle
\partial_t r_{\rm aux}^{\eps}-\mathrm{div}\displaystyle \Big[\aeff \nabla r_{\rm aux}^{\eps}\Big]
=\eps^{-\alpha/2}\Big[\langle{\rm a}\rangle^0\big(\xi_\frac t{\eps^\alpha}\big)-\aeff \Big]
\frac{\partial^2u^0}{\partial x^2}\\[4mm]
r_{\rm aux}^{\eps}(x,0)=0,
\end{array}\right.
\end{equation*}
and notice that this problem admits an explicit solution
$$
r_{\rm aux}^{\eps}=\eps^{\alpha/2}A^0 \Big(\frac{t}{\eps^\alpha}\Big)\frac{\partial^2u^0}{\partial x^2}
= A^{\eps,0}(t) \frac{\partial^2u^0}{\partial x^2}.
$$
Since $u^0$ satisfies estimates \eqref{eq:est_uzero}, the convergence of $A^{\eps,0}$ implies that $r_{\rm aux}^{\eps}$   converges in law in $C((0,T);L^2(\mathbb R))$ to the solution of problem \eqref{eq:eff_spde}.

Next, we represent $ \check r^{\eps,0}$ as $ \check r^{\eps,0}(x,t)=\mathcal{Z}^{\eps}(x,t)+r_{\rm aux}^{\eps}(x,t)$. Then $\mathcal{Z}^{\eps}$
solves the problem
\begin{equation*}
\left\{\begin{array}{l}
\displaystyle
\partial_t \mathcal{Z}^{\eps}- \cA^\eps \mathcal{Z}^{\eps}
= \mathrm{div}\Big(\Big[a\Big(\frac x\eps,\frac t{\eps^\alpha}\Big)-\aeff \Big]\nabla r_{\rm aux}^{\eps}(x,t)\Big)\\[4mm] \mathcal{Z}^{\eps}(x,0)=0.
\end{array}\right.
\end{equation*}
The conclusion of the proposition can be deduced from the next result.
\end{proof}
\begin{lemma} \label{lmm:aux_conv_al<2}
The quantity $\mathcal{Z}^{\eps}$ goes to zero in probability in $L^2((0,T)\times\mathbb R)$,
as $\eps$ tends to $0$.
\end{lemma}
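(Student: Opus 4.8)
\noindent\emph{Proof strategy.} I would read \eqref{prob_Zeps} as a linear parabolic equation whose right-hand side is the classical homogenization residual of the smooth deterministic profile $\partial_x^2u^0$, modulated in time by the bounded scalar process $A^{\eps,0}$, and show that this residual disappears in the limit.

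\smallskip
\noindent\textit{A priori bound and reduction.} Testing \eqref{prob_Zeps} with $\mathcal Z^\eps$, integrating by parts in $x$, and using $\mathcal Z^\eps(\cdot,0)=0$, the ellipticity \ref{a3} and Young's inequality, one gets
\[
\sup_{t\in[0,T]}\|\mathcal Z^\eps(\cdot,t)\|_{L^2(\R)}^2+\int_0^T\|\partial_x\mathcal Z^\eps\|_{L^2(\R)}^2\,dt\ \le\ C\int_0^T\|\partial_x r_{\rm aux}^\eps(\cdot,t)\|_{L^2(\R)}^2\,dt\ \le\ C\Big(\sup_{[0,T]}|A^{\eps,0}|\Big)^2,
\]
using $\partial_x r_{\rm aux}^\eps=A^{\eps,0}(t)\,\partial_x^3u^0$ and \eqref{eq:est_uzero}. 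Since $A^{\eps,0}$ converges in law in $C([0,T])$ by the invariance principle recalled above, the right-hand side is bounded in probability, so $(\mathcal Z^\eps)$ is bounded in probability in $C([0,T];L^2(\R))\cap L^2(0,T;H^1(\R))$, hence tight in $L^2((0,T)\times\R)$ with the weak topology. It therefore suffices to prove that $\mathcal Z^\eps\to0$ in probability (strongly in $L^2$) along any subsequence.

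\smallskip
\noindent\textit{Corrector and identification of the limit.} Since $r_{\rm aux}^\eps=A^{\eps,0}(t)\,\partial_x^2u^0$ solves the constant-coefficient problem \eqref{prob_V_auxi}, equation \eqref{prob_Zeps} reads $\partial_t\mathcal Z^\eps-\cA^\eps\mathcal Z^\eps=A^{\eps,0}(t)\,\big(\cA^\eps-\mathrm{div}(\aeff\nabla\cdot)\big)(\partial_x^2u^0)$, i.e.\ the source is $A^{\eps,0}$ times the residual obtained by inserting the homogenized profile $\partial_x^2u^0$ into $\cA^\eps$. I would correct $\mathcal Z^\eps$ with the correctors of \eqref{eq:aux_0_dif_al<2}--\eqref{eq:aux_j_dif_al<2}, exactly as in the construction of $\cE^\eps$ in Section~\ref{sect:formal_exp}, setting
\[
\Phi^\eps:=\mathcal Z^\eps-A^{\eps,0}(t)\sum_{j=0}^{J_0}\eps^{\,j\delta+1}\,\chi^j\!\Big(\tfrac x\eps,\xi_{t/\eps^\alpha}\Big)\,\partial_x^3u^0(x,t),
\]
which is $O_{\mathbf P}(\eps)$ in $L^2((0,T)\times\R)$ by \eqref{eq:est_chi_al<2}, \eqref{eq:est_uzero} and $\sup_{[0,T]}|A^{\eps,0}|=O_{\mathbf P}(1)$, so that it is equivalent to prove $\Phi^\eps\to0$ in probability. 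Computing $\partial_t$ of the corrector part through It\^o's formula for $\chi^j(x/\eps,\xi_{t/\eps^\alpha})$ (its time-dependence being only H\"older, the "time derivative'' is understood stochastically) and using \eqref{eq:aux_0_dif_al<2}--\eqref{eq:aux_j_dif_al<2} to telescope the singular drifts -- the $\eps^{-1}$ terms from $x$-differentiation and the $\eps^{1-\alpha}\mathcal L_y\chi^{j}$ terms from the fast time scale -- the equation for $\Phi^\eps$ becomes $\partial_t\Phi^\eps-\cA^\eps\Phi^\eps=\mathfrak R^\eps_1+\mathfrak R^\eps_2+\mathfrak R^\eps_3$, where $\mathfrak R^\eps_1$ gathers divergence-form and bulk terms of order $\eps$ together with the "end'' drift of order $\eps^{(J_0+1)\delta-1}$ carrying $\mathcal L_y\chi^{J_0}$ (whose exponent exceeds $1-\tfrac\alpha2>0$ because $\delta J_0>\tfrac\alpha2$), $\mathfrak R^\eps_2$ is a martingale forcing of order $\eps^{1-\alpha/2}$, and $\mathfrak R^\eps_3=A^{\eps,0}(t)\big[\langle a+a\,\nabla_z\chi^0\rangle(\xi_{t/\eps^\alpha})-\aeff\big]\,\partial_x^4u^0$.

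\smallskip
\noindent\textit{Estimates and conclusion.} The energy estimate of the first step, applied to the equation for $\Phi^\eps$, disposes at once of $\mathfrak R^\eps_1$ and -- using It\^o's isometry for the corresponding stochastic convolution -- of $\mathfrak R^\eps_2$, each contributing $O_{\mathbf P}(\eps^{\theta})$ with $\theta=\min(1,\,(J_0+1)\delta-1,\,1-\tfrac\alpha2)>0$. The term $\mathfrak R^\eps_3$ is the genuinely new one, absent in time-independent homogenization (where the $z$-homogenized flux equals the constant $\aeff$) and not small pointwise; but the $z$-homogenized flux $\langle a+a\,\nabla_z\chi^0\rangle(y)$ has mean $\aeff$ against the invariant measure, so writing $\langle a+a\,\nabla_z\chi^0\rangle-\aeff=\cL Q^0$ as in \eqref{eqdefQ0} and applying It\^o's formula to $Q^0(\xi_{\cdot/\eps^\alpha})$ -- then integrating by parts against the propagator of $\partial_t-\cA^\eps$ (equivalently, invoking the invariance principle together with $\sup_{[0,T]}|A^{\eps,0}|=O_{\mathbf P}(1)$) -- shows that the contribution of $\mathfrak R^\eps_3$ to $\Phi^\eps$ is $O_{\mathbf P}(\eps^{\alpha/2})$. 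Collecting the bounds yields $\mathbf E\|\Phi^\eps\|_{L^2((0,T)\times\R)}^2\to0$ along the subsequence, hence $\Phi^\eps\to0$ and finally $\mathcal Z^\eps\to0$ in probability, which is the assertion of the lemma (and, with the computations preceding it, closes the proof of Proposition~\ref{prop:weak_conv_al<2}). The delicate point is the interaction of the two fast scales: the It\^o "time derivative'' of the spatial corrector produces a drift of order $\eps^{1-\alpha}$ -- genuinely divergent for $\alpha\ge1$, which is exactly why the cascade $\chi^1,\dots,\chi^{J_0}$ with $J_0=\lfloor\alpha/(2\delta)\rfloor+1$ is needed -- and a martingale of order $\eps^{1-\alpha/2}$, while the residual $\mathfrak R^\eps_3$ surviving the cascade is small only in the central-limit sense, so that beating it requires the diffusion-approximation estimates of \cite{pard:vere:01} applied to the $A^{\eps,0}$-modulated source; making every remainder genuinely $o_{\mathbf P}(1)$ in $L^2((0,T)\times\R)$, uniformly in the randomness carried by $A^{\eps,0}$, is the heart of the argument.
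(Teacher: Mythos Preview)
Your approach is correct in outline but takes a genuinely different route from the paper. The paper's proof is \emph{soft}: it first freezes the source as a deterministic $\Xi\in L^2$, obtains a uniform energy bound and hence compactness of $\mathcal{Y}^\eps$ in $L^2((0,T)\times\R)$ (via a Lions--Aubin argument plus Aronson estimates), and then identifies any weak limit as zero using the classical oscillating test function $\varphi+\eps\chi^0(x/\eps,\xi_{t/\eps^\alpha})\nabla\varphi$; a density argument extends this to arbitrary $\Xi$, and the passage to the random source $\nabla r_{\rm aux}^\eps$ follows from the uniform continuity of the solution map together with the convergence in law of $A^{\eps,0}$. No corrector expansion for $\mathcal Z^\eps$ is ever written down, and no quantitative rate is obtained. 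Your argument is instead \emph{constructive}: you build the full corrector cascade inside $\mathcal Z^\eps$, reproduce the telescoping of Proposition~\ref{prop:formal_exp_al<2} modulated by $A^{\eps,0}$, and estimate each remainder explicitly. This buys you quantitative $O_{\mathbf P}(\eps^\theta)$ rates, at the cost of longer computations and two points that deserve more care than your sketch gives them. First, the product rule applied to $A^{\eps,0}(t)\chi^j(x/\eps,\xi_{t/\eps^\alpha})$ produces, besides the terms you list, the cross term $\eps^{j\delta+1-\alpha/2}\langle a\rangle^0(\xi_{t/\eps^\alpha})\,\chi^j\,\partial_x^3u^0$ coming from $\partial_t A^{\eps,0}$; it is harmless (order $\eps^{1-\alpha/2}$) but should be recorded. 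Second, and more substantially, your treatment of $\mathfrak R_3^\eps$ is terse: the Poisson-equation trick with $Q^0$ does work, but the cleanest way to see the $O_{\mathbf P}(\eps^{\alpha/2})$ bound is to note $A^{\eps,0}\langle a\rangle^0(\xi_{t/\eps^\alpha})=\tfrac{\eps^{\alpha/2}}{2}\partial_t\big((A^{\eps,0})^2\big)$, subtract the explicit profile $\tfrac{\eps^{\alpha/2}}{2}(A^{\eps,0})^2\partial_x^4u^0$, and estimate the resulting source in $L^2(0,T;H^{-1})$---the divergence form of $\cA^\eps$ then avoids the apparent $\eps^{-1}$ loss. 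Either way the conclusion is the same; the paper's compactness argument is shorter and avoids these bookkeeping issues, while yours gives rates.
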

\begin{proof}
The arguments are the same as in the proof of \cite[Lemma 5.1]{KPP_2015}. 
\end{proof}

\vspace{0.5cm}

To finish the proof of Theorem \ref{thm:main_result}, we need to control $r^\eps$, solution of problem \eqref{eq:dyn_r_eps_al<2}.

\subsection{The case $\al > 2$}

The main idea is basically the same as that in the previous case. We construct an expansion $ \cE^\eps$ such that $u^\eps -  \cE^\eps = \eps^{\al/2} R^\eps$ and study the remainder to obtain Theorem \ref{thm:main_result}. However,  the asymptotic of $u^\eps$ is more complicated in this case,  and $\cE^\eps$ consists of two parts $\cE^\eps_1$ and $\cE^\eps_2$ (see Lemmata \ref{lmm:first_abs_cont_part_al_geq_4} and \ref{lem:second_abs_term} below). The second difference is the breakdown of $R^\eps$ in four parts: $R^\eps = r^\eps + \hat r^\eps  +  \widetilde r^\eps+ \rho^\eps$, such that
\begin{itemize}
\item $r^\eps$ again contains large parameters and now converges to $q^0$ (See Section \ref{sect:conv_sing_mart_part});
\item $\hat r^\eps$ weakly converges to zero (a direct consequence of the convergence of $r^\eps$);
\item $\widetilde r^\eps$ is negligible (as for $\al < 2$);
\item $\rho^\eps$ deals with the initial value of the discrepancy $R^\eps$ and is not a priori negligible for $\al > 2$. Its convergence to zero is studied in Section  \ref{sect:rest_init_cond}.
\end{itemize}
For $\al > 2$, the term $\check r^\eps$ does not exist.

%
In this section the following notations are used for $k\geq 1$:
\begin{align} \label{eq:def_phi_k}
\phi^k\left( \frac{x}{\eps},x,t\right) & =  \sum_{n=1}^{k} \chi^{n-1}\left( \frac{x}{\eps}\right) \partial^{n}_x v^{k-n} \left(x,t\right), \\  \label{eq:def_Phi_k}
\Phi^k\left( \frac{x}{\eps},\xi_{t/\eps^\al},x,t\right) & = \sum_{n=1}^{k} \kappa^{n-1}\left( \frac{x}{\eps},\xi_{t/\eps^\al}\right) \partial^{n}_x v^{k-n} \left(x,t\right) ,
\end{align}
and
\begin{align} \label{eq:def_theta_k}
& \theta^k\left( \frac{x}{\eps},x,t\right) =\widehat \theta^k\left( \frac{x}{\eps},x,t\right) + \chi^0\left( \frac{x}{\eps}\right) u^k_x(x,t) \\ \nonumber
& \quad =  \left[ \sum_{n=0}^{k-1} \tau^{k-1-n} \left( \frac{x}{\eps}\right) u^n_x(x,t) \right] + \chi^0\left( \frac{x}{\eps}\right) u^k_x(x,t),\\ \label{eq:def_Theta_k}
& \Theta^k\left( \frac{x}{\eps},\xi_{t/\eps^\al},x,t\right) = \widehat \Theta^k\left( \frac{x}{\eps},\xi_{t/\eps^\al},x,t\right) + \kappa^0 \left( \frac{x}{\eps},\xi_{t/\eps^\al}\right) u^k_x (x,t) \\ \nonumber
& \quad =  \sum_{n=0}^{k-1} \gamma^{k-n-1} \left( \frac{x}{\eps},\xi_{t/\eps^\al}\right) u^n_x (x,t)+ \kappa^0\left( \frac{x}{\eps},\xi_{t/\eps^\al}\right) u^k_x (x,t).
\end{align}
Remember that $v^0 = u^0$. Finally
\begin{align} \label{eq:def_psi_k}
&\quad\psi^k\left( \frac{x}{\eps},x,t\right)= \widehat \psi^k \left( \frac{x}{\eps},x,t\right) + \chi^1 \left( \frac{x}{\eps}\right) u^k_{xx}(x,t) \\  \nonumber
&\quad =\sum_{n=0}^{k-1} \eta^{k-n} \left( \frac{x}{\eps}\right)u^n_{xx}(x,t)  + \tau^{k-1} \left( \frac{x}{\eps}\right) v^1_{x}(x,t) + \chi^1 \left( \frac{x}{\eps}\right) u^k_{xx}(x,t),\\  \label{eq:def_Psi_k}
&\quad\Psi^k\left( \frac{x}{\eps},\xi_{t/\eps^\al},x,t\right) = \widehat \Psi^k\left( \frac{x}{\eps},\xi_{t/\eps^\al},x,t\right) + \kappa^1\left( \frac{x}{\eps},\xi_{t/\eps^\al}\right) u^k_{xx}(x,t) \\  \nonumber
&\quad = \sum_{n=0}^{k-1} \zeta^{k-n} \left( \frac{x}{\eps},\xi_{t/\eps^\al}\right) u^n_{xx} + \gamma^k \left( \frac{x}{\eps},\xi_{t/\eps^\al}\right)v^1_{x}(x,t)\\ \nonumber
& \qquad \qquad + \kappa^1 \left( \frac{x}{\eps},\xi_{t/\eps^\al}\right) u^k_{xx}(x,t).
\end{align}
The functions $\phi^k$, $\theta^k$, $\psi^k$ are bounded and smooth functions, whereas the random functions $\Phi^k$, $\Theta^k$ and $\Psi^k$ are bounded and smooth w.r.t. $x$ and of at most polynomial growth w.r.t. $\xi_{t/\eps^\al}$.

As in \cite{KPP_2015}, Eq. (21), we consider a first principal part of the asymptotic of $u^\eps$ of the form:

\begin{multline}\label{eq:operator_E_1}
\cE_1^\eps(x,t)  =   u^0(x,t) +  \sum_{k=1}^{J_1+1} \eps^{k} v^k(x,t)\\
+\sum_{k=1}^{J_1+2} \eps^k \phi^k\left( \frac{x}{\eps},x,t\right)
  +  \sum_{k=1}^{J_1    +2} \eps^{k+\delta}\Phi^k\left( \frac{x}{\eps},\xi_{t/\eps^\al},x,t\right) \\
 +  \eps^{\delta} u^1(x,t) +\eps^{\delta +1} \theta^1\left( \frac{x}{\eps},x,t\right)\\
 + \eps^{2\delta+1} \Theta^1 \left( \frac{x}{\eps},\xi_{t/\eps^\al},x,t\right)
+ \eps^{\delta+2} \psi^1\left( \frac{x}{\eps},x,t\right) + \eps^{2\delta+2} \Psi^1\left( \frac{x}{\eps},\xi_{t/\eps^\al},x,t\right).
\end{multline}
with $\phi^k$ and $\Phi^k$ defined by \eqref{eq:def_phi_k} and \eqref{eq:def_Phi_k}. The functions $v^k$ are given as the solution of Eq. \eqref{eq:def_v_k_al>2}.
\begin{lemma} \label{lmm:first_abs_cont_part_al_geq_4}
The decomposition of the quantity $\cS^\eps_1 = (\partial_t - \cA^\eps) (\cE^\eps_1)$ is given by:
\begin{multline}  \label{eq:Ito_S_1}
\cS^\eps_1  =   \left[ \sum_{k=1}^{J_1+2} \eps^{k+\delta-\al/2} \Phi^k_y + \eps^{2\delta+1-\al/2} \Theta^1_y +  \eps^{2\delta+2-\al/2} \Psi^1_y \right] \sigma(\xi_{t/\eps^\al}) dB_t \\
 +  \left[ \eps^{J_1+1} r^{a,1,\eps} + \eps^{\delta+1} r^{a,2,\eps} \right] dt +\eps^\delta w^1 dt \\
 -   \left[ \eps^{2\delta -1} \cA^\eps \Theta^1 + \eps^{2\delta} ( a^\eps \Theta^1_{xz} + (a^\eps \Theta^1_x)_z) + \eps^{2\delta} \cA^\eps \Psi^1 \right] dt
\end{multline}
where the two remainders $r^{a,1,\eps}=r^{a,1,\eps}(x,t)$ and $r^{a,2,\eps}=r^{a,2,\eps}(x,t)$ only contain non-negative powers of $\eps$, and are bounded and smooth functions.
\end{lemma}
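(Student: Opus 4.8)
The plan is to substitute $z=x/\eps$ and $y=\xi_{t/\eps^\al}$ into each summand of $\cE^\eps_1$ in \eqref{eq:operator_E_1}, apply It\^o's formula in time, expand $\cA^\eps$ in powers of $\eps$, and then show that every negative power of $\eps$ occurring in $(\partial_t-\cA^\eps)\cE^\eps_1$ is either killed by the defining equation of one of the correctors or is one of the terms displayed in \eqref{eq:Ito_S_1}.

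First I would record the two elementary expansions that drive the whole computation. For a function $F(x/\eps,x,t)$ one has $\cA^\eps F=\eps^{-2}\cA_{zz}F+\eps^{-1}\cA_{zx}F+\cA_{xx}F$, where $\cA_{zz}F=\mathrm{div}_z(a\nabla_z F)$, $\cA_{zx}F=\mathrm{div}_z(a\nabla_x F)+\mathrm{div}_x(a\nabla_z F)$ and $\cA_{xx}F=\mathrm{div}_x(a\nabla_x F)$, all evaluated after substitution. For a random summand $F(x/\eps,\xi_{t/\eps^\al},x,t)$, since $t\mapsto\xi_{t/\eps^\al}$ is a diffusion with generator $\eps^{-\al}\cL$, It\^o's formula adds the drift $\eps^{-\al}(\cL_y F)\,dt$ and the martingale $\eps^{-\al/2}(\nabla_y F)\,\sigma(\xi_{t/\eps^\al})\,dB_t$. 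Collecting these martingale contributions over the random summands $\Phi^k$ ($k=1,\dots,J_1+2$), $\Theta^1$, $\Psi^1$ with their prefactors $\eps^{k+\delta}$, $\eps^{2\delta+1}$, $\eps^{2\delta+2}$ produces precisely the Brownian line of \eqref{eq:Ito_S_1}.

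The heart of the proof is the cancellation of the drift. The singular spatial terms $\eps^{m-2}\cA_{zz}(\cdot)$ and $\eps^{m-1}\cA_{zx}(\cdot)$, acting on the $\chi$-, $\tau$- and $\eta$-layers of $\cE^\eps_1$, are removed layer by layer by the cell equations \eqref{eq:def_chi_0_al>2}, \eqref{eq:def_f_k}--\eqref{eq:def_chi_k_al>2}, \eqref{eq:def_tau_1}--\eqref{eq:def_tau_k}, \eqref{eq:def_eta_1}--\eqref{eq:def_eta_k}, each of the form $\ocA(\cdot)=(\text{known right-hand side})$. Since the genuine coefficient $a^\eps$ differs from the $y$-averaged coefficient $\bar a$, every such cancellation leaves a mean-zero (in $y$) fluctuating defect which is exactly the right-hand side of one of the $\kappa$-, $\gamma$- or $\zeta$-equations \eqref{eq:def_kappa_0_al>2}--\eqref{eq:def_kappa_k}, \eqref{eq:def_gamma_1}--\eqref{eq:def_gamma_k}, \eqref{eq:def_zeta_1}--\eqref{eq:def_zeta_k}; these in turn cancel the singular It\^o drifts $\eps^{-\al}\cL_y(\cdot)$ generated by $\Phi^k$, $\Theta^1$, $\Psi^1$ (note $\eps^{k+\delta-\al}=\eps^{k-2}$ because $\delta=\al-2$, so these drifts are of order $\eps^{-1}$ and lower). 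The order-one residues left over by this double cancellation are absorbed into the definitions of $v^k$, $u^1$ and $w^1$ through \eqref{eq:def_v_k_al>2}, \eqref{eq:S_k}, \eqref{eq:def_aeffk_al>2}, \eqref{eq:def_w_k_al>2}. Since the $v$-hierarchy is imposed only up to index $J_1+1$ and the $u^1$-layer is closed after $\psi^1$ and $\Psi^1$, the leftover is a finite sum of bounded smooth terms of the form $\eps^{\ell}\,\mathfrak b(x/\eps,\xi_{t/\eps^\al})\,\mathfrak F(x,t)$ with $\ell\ge 0$, coming from the truncation of the $v$-hierarchy at order $\eps^{J_1+1}$ and from the truncation of the $u^1$-layer at order $\eps^{\delta+1}$; regrouping them gives the bounded smooth remainders $r^{a,1,\eps}$ and $r^{a,2,\eps}$, whereas $\eps^{\delta}w^1$ is kept untouched. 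The only singular drift terms not matched by a further corrector are the $\cA^\eps$-images of the top random correctors, namely $\eps^{2\delta-1}\cA^\eps\Theta^1$, $\eps^{2\delta}(a^\eps\Theta^1_{xz}+(a^\eps\Theta^1_x)_z)$ and $\eps^{2\delta}\cA^\eps\Psi^1$; these are the remaining terms of \eqref{eq:Ito_S_1} and are dealt with in Lemma \ref{lem:second_abs_term}.

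I expect the main obstacle to be purely organizational: matching, power of $\eps$ by power of $\eps$, each corrector identity to the term it annihilates, in particular keeping in step the spatial cancellations (governed by $\ocA$ and the $\chi,\tau,\eta$-correctors) and the temporal ones (governed by $\cL$ and the $\kappa,\gamma,\zeta$-correctors), and verifying that the cross operator $\eps^{-1}\cA_{zx}$ acting on the random layers contributes no unanticipated negative power. This is the ``straightforward but cumbersome'' computation referred to in the introduction; in practice I would carry out the full bookkeeping in the Appendix and state here only the resulting identity \eqref{eq:Ito_S_1}.
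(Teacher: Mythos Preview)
Your proposal is correct and follows essentially the same route as the paper: apply It\^o's formula to $\cE^\eps_1$, expand $\cA^\eps$ in powers of $\eps$ via the chain rule in the fast variable, and then match each negative power of $\eps$ against the defining equation of the appropriate corrector ($\chi^k,\kappa^k$ for the $v$-hierarchy, $\tau^0,\gamma^0,\eta^1,\zeta^1$ for the $u^1$-layer), with the order-one residues absorbed by the equations for $u^0$, $v^k$ and $u^1$. The paper carries out exactly this bookkeeping in the Appendix, order by order from $\eps^{-1}$ up to $\eps^{J_1}$ and then $\eps^{\delta-1}$, $\eps^\delta$, and collects the truncation leftovers into $r^{a,1,\eps}$, $r^{a,2,\eps}$ just as you describe.
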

\begin{proof}
To prove this claim, we simply apply the It\^o formula. Using the definitions of the functions introduced in Section \ref{ssect:aux_pb}, and after some straightforward but cumbersome computations, we obtain the desired equality (also see the beginning of the proof of Proposition \ref{prop:formal_exp_al<2}). Boundedness and smoothness are consequences of the properties justified in Section \ref{ssect:aux_pb}.
\end{proof}

\vspace{0.3cm}
Since we need to control the last term in \eqref{eq:Ito_S_1}, we consider a second expansion:
\begin{multline}\label{eq:operator_E_2}
\cE^\eps_2(x,t)  =  \sum_{k=2}^{N_0} \eps^{k\delta} u^k(x,t) +\sum_{k=2}^{N_0} \eps^{k\delta +1} \theta^k\left( \frac{x}{\eps},x,t\right) \\
+  \sum_{k=2}^{N_0} \eps^{(k+1)\delta+1} \Theta^k \left( \frac{x}{\eps},\xi_{t/\eps^\al},x,t\right) \\
+ \sum_{k=2}^{N_0} \eps^{k\delta+2} \psi^k\left( \frac{x}{\eps},x,t\right) +  \sum_{k=2}^{N_0} \eps^{(k+1)\delta+2} \Psi^k\left( \frac{x}{\eps},\xi_{t/\eps^\al},x,t\right)
\end{multline}
where $N_0= 2J_0+2$. Adding the last term in \eqref{eq:Ito_S_1} we define
$$\cS^\eps_2 = (\partial_t - \cA^\eps) \cE^\eps_2 -  \left[ \eps^{2\delta -1} \cA^\eps \Theta^1 + \eps^{2\delta} ( a^\eps \Theta^1_{xz} + (a^\eps \Theta^1_x)_z) + \eps^{2\delta} \cA^\eps \Psi^1 \right] dt$$
In $\cS^\eps_2$ there is a martingale term of the from
\begin{eqnarray*}
&& \left[ \  \sum_{k=2}^{N_0} \eps^{(k+1)\delta+1-\al/2} \Theta^k_y +  \sum_{k=2}^{N_0} \eps^{(k+1)\delta+2-\al/2} \Psi^k_y\right]  \sigma(\xi_{t/\eps^\al}) dB_t \\
&&\quad = \frac{\eps^{\al/2}}{\eps} \sum_{k=2}^{N_0} \eps^{k\delta} \Theta^k_y \sigma(\xi_{t/\eps^\al})  dB_t+ \eps^{\al/2+\delta}   \sum_{k=2}^{N_0} \eps^{(k-1)\delta} \Psi^k_y \sigma(\xi_{t/\eps^\al})   dB_t.
\end{eqnarray*}

\begin{lemma} \label{lem:second_abs_term}
For a given sequence of smooth functions $w^k$, there exist a unique sequence of correctors $u^k$, $\theta^k$, $\Theta^k$, $\psi^k$ and $\Psi^k$ given respectively by \eqref{eq:def_u_k}, \eqref{eq:def_theta_k}, \eqref{eq:def_Theta_k}, \eqref{eq:def_psi_k}, \eqref{eq:def_Psi_k} and a unique sequence of constants $\aeffk$ such that
\begin{itemize}
\item The sequence $\aeffk$, given by \eqref{eq:def_d_1} and \eqref{eq:def_aeffk_al>2}, does not depend on the choice of $w^k$.
\item The absolutely continuous part of $\cS^\eps_2$ is given by
$$ \sum_{k=2}^{N_0} \eps^{k\delta} w^k(x,t) + \eps^{\delta+1} r^{a,3,\eps} + \eps^{(2J_0+3)\delta -1} r^{a,4,\eps}$$
where the two remainders $r^{a,3,\eps}=r^{a,3,\eps}(x,t)$ and $r^{a,4,\eps}=r^{a,4,\eps}(x,t)$ only contain non-negative powers of $\eps$ and are smooth and bounded in $(x,t)$.
\end{itemize}
\end{lemma}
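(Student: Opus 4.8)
The strategy is to apply It\^o's formula to $\cE^\eps_2$ exactly as in the proof of Lemma \ref{lmm:first_abs_cont_part_al_geq_4}, collect the resulting drift terms by powers of $\eps$, and then show that the definitions of the correctors in Section \ref{ssect:aux_pb} have been tuned precisely so that, at each order $\eps^{k\delta}$ (and each order $\eps^{k\delta+1}$, $\eps^{k\delta+2}$) the "fast" contributions cancel, leaving only the prescribed term $\eps^{k\delta}w^k$ plus higher-order junk. Concretely, I would proceed by induction on $k$. Applying $(\partial_t-\cA^\eps)$ to $\eps^{k\delta}u^k$ produces, via \eqref{eq:def_u_k}, the term $\eps^{k\delta}(w^k + \sum_{m}\aeff^{m,\rm eff}u^{k-m}_{xx})$ together with the singular terms $-\eps^{k\delta-2}\,\mathrm{div}_z(a^\eps\nabla_z(\cdot))$ and cross terms $-\eps^{k\delta-1}(\cdots)$ coming from the $x$-dependence being split into slow and fast variables. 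The terms $\eps^{k\delta+1}\theta^k$ and $\eps^{(k+1)\delta+1}\Theta^k$ (with $\theta^k$, $\Theta^k$ as in \eqref{eq:def_theta_k}, \eqref{eq:def_Theta_k}) are designed exactly to absorb the $\eps^{k\delta-1}$ and the deterministic part of the $\eps^{k\delta}$ fast terms: the equations $\ocA\tau^k = -\overline{\cA\gamma^{k-1}}$ in \eqref{eq:def_tau_k} and the equations \eqref{eq:def_gamma_1}, \eqref{eq:def_gamma_k} for $\gamma^k$ are the compatibility/cell problems that guarantee this, and the leftover averaged coefficient is precisely $\aeff^{k,\rm eff}$ as defined in \eqref{eq:def_aeffk_al>2}. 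Similarly, $\eps^{k\delta+2}\psi^k$ and $\eps^{(k+1)\delta+2}\Psi^k$ with $\eta^k$, $\zeta^k$ from \eqref{eq:def_eta_k}, \eqref{eq:def_zeta_k} kill the next layer of fast terms, including the contribution $-[\eps^{2\delta-1}\cA^\eps\Theta^1 + \eps^{2\delta}(a^\eps\Theta^1_{xz}+(a^\eps\Theta^1_x)_z) + \eps^{2\delta}\cA^\eps\Psi^1]$ carried over from $\cS^\eps_1$, which enters at order $k=2$.

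For the independence of $\aeff^{k,\rm eff}$ from the choice of $w^k$: note that in \eqref{eq:def_u_k} the function $w^j$ enters only as an additive source term with initial data zero, so $u^j$ depends affinely on $(w^1,\ldots,w^j)$; but $\aeff^{k,\rm eff}$ is defined in \eqref{eq:def_d_1}--\eqref{eq:def_aeffk_al>2} purely through the correctors $\tau^{k-1}$, $\gamma^{k-2}$, $\kappa^0$, which solve cell problems whose data involve only $a$, $\chi^j$, $\kappa^j$ and the invariant density $p$ — never $u^k$ or $w^k$. Hence the $\aeff^{k,\rm eff}$ are fixed once and for all, and then the recursive structure of \eqref{eq:def_u_k} shows that specifying the $w^k$ determines the $u^k$ uniquely, and through \eqref{eq:def_theta_k}--\eqref{eq:def_Psi_k} the remaining correctors $\theta^k,\Theta^k,\psi^k,\Psi^k$ uniquely as well (each being a finite sum of previously-defined correctors against derivatives of $u^n$ and $v^1$). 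Uniqueness of each cell-problem solution is pinned down by the normalization $\int_\mathbb T (\cdot)\,dz=0$ in \eqref{eq:norm_cond_chi0}, \eqref{eq:norm_kappa_1}.

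What remains is purely bookkeeping: after all the designed cancellations, the surviving drift in $\cS^\eps_2$ consists of $\sum_{k=2}^{N_0}\eps^{k\delta}w^k$ together with boundary terms of the truncation — the fast terms of order $\eps^{(N_0+1)\delta-1}$ and higher coming from $u^{N_0}$ not having an $u^{N_0+1}$-type partner, which we group into $\eps^{(2J_0+3)\delta-1}r^{a,4,\eps}$ (recall $N_0=2J_0+2$), plus the $x$-derivative remainders $\eps^{\delta+1}r^{a,3,\eps}$ analogous to $r^{a,2,\eps}$ in Lemma \ref{lmm:first_abs_cont_part_al_geq_4}. Boundedness and smoothness of $r^{a,3,\eps}$, $r^{a,4,\eps}$ follow from the estimates \eqref{eq:est_uzero}, \eqref{eq:est_chi_al>2}, \eqref{eq:est_kappa_1} on $u^k$ and all correctors, exactly as in the previous lemma; I would relegate the explicit term-matching to the Appendix. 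The main obstacle is purely organizational: keeping track of the three interleaved powers $\eps^{k\delta}$, $\eps^{k\delta+1}$, $\eps^{k\delta+2}$ and their "shifted" random companions $\eps^{(k+1)\delta+1}$, $\eps^{(k+1)\delta+2}$, and verifying that at every order the cell-problem data has mean zero so that the next corrector actually exists — this is where Assumption \ref{a6} and the Pardoux--Veretennikov solvability results are invoked repeatedly.
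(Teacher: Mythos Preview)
Your proposal is correct and follows essentially the same approach as the paper: apply It\^o's formula to $\cE^\eps_2$, sort the drift by powers of $\eps$, and verify that the cell problems \eqref{eq:def_tau_k}, \eqref{eq:def_gamma_k}, \eqref{eq:def_eta_k}, \eqref{eq:def_zeta_k} together with \eqref{eq:def_u_k} force the terms of order $\eps^{k\delta-1}$ to vanish and those of order $\eps^{k\delta}$ to reduce to $w^k$, with the truncation residues collected into $r^{a,3,\eps}$ and $r^{a,4,\eps}$. One small slip: the singular $\eps^{k\delta-2}$ contributions do not arise from $u^k$ (which has no fast variable) but from the corrector terms $\theta^k,\Theta^k,\psi^k,\Psi^k$; in fact the paper's bookkeeping tracks only the two critical scales $\eps^{k\delta-1}$ and $\eps^{k\delta}$, everything at $\eps^{k\delta+1}$ or higher being absorbed directly into $r^{a,3,\eps}$---so your ``three interleaved powers'' picture is slightly more elaborate than what is actually needed.
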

\begin{proof}
Again the proof is a long and awkward application of the It\^o formula. The definition of all correctors leads to the cancellation of a lot of terms.
\end{proof}

\vspace{0.3cm}
Using the definition of $J_0$, $J_1$ and $\delta$, we conclude that
the absolutely continuous term
$$\eps^{-\al/2} \left[\eps^{J_1+1} r^{a,1,\eps} + \eps^{\delta+1} r^{a,2,\eps} + \eps^{\delta+1} r^{a,3,\eps} +  \eps^{(2J_0+3)\delta -1} r^{a,4,\eps}\right]$$
 tends to zero as $\eps\to0$  in probability\footnote{Even in $\bL^q(\Omega)$ for $q \geq 1$.} in $C(0,T;L^p(\R^n))$ for any $p\geq 1$.

%

Assume again that we represent $u^\eps$ as follows:
\begin{equation*}
u^\eps = \cE^\eps_1 + \cE^\eps_2 + \eps^{\al/2} R^\eps
\end{equation*}
where $\cE^\eps_1$ is given by \eqref{eq:operator_E_1} and $\cE^\eps_2$ by \eqref{eq:operator_E_2}. Recall that from the definition of $\delta$, $J_0$ and $J_1$, we have
$$\nu =  \min( J_1+1, J_0\delta , \delta + 1) > \al/2.$$
From the definition of $\phi^k$ given by \eqref{eq:def_phi_k}, we obtain the development:
\begin{align*}
u^\eps(x,t) & =   u^0(x,t) +\sum_{k=1}^{J_1} \eps^k \left[ v^k(x,t) + \sum_{\ell=1}^k \chi^{\ell-1} \left( \frac{x}{\eps}\right) \partial^{\ell}_x v^{k-\ell} \left(x,t\right)  \right]  \\ \nonumber
& +  \sum_{k=1}^{J_0} \eps^{k\delta} u^k(x,t)  + \eps^{\al/2}R^\eps(x,t) + \eps^{\al/2} \widetilde R^\eps(x,t),
\end{align*}
As was shown in Section \ref{ssect:aux_pb}, the residual $\widetilde R^{\eps}(x,t) $ converges to zero when $\eps$ goes to zero (at least) in probability in $C(0,T;L^p(\R^n))$ for any $p\geq 1$.

Let us state our result on $R^\eps$.
\begin{proposition} \label{prop:dev_al}
The discrepancy $R^\eps$ can be split in four parts:
\begin{equation*} 
R^\eps = r^\eps + \hat r^\eps  +  \widetilde r^\eps+ \rho^\eps
\end{equation*}
such that
\begin{itemize}
\item the dynamics of $r^\eps$ contains the terms with large parameters:
\begin{eqnarray} \label{eq:SPDE_eps}
&& d r^\eps =  (\cA^\eps r^\eps) dt + \sum_{k=1}^{N_0} \eps^{k\delta-\al/2} w^k(x,t) dt - \frac{1}{\eps} \Bigg[ \kappa^1_y\left( \frac{x}{\eps},\xi_{t/\eps^\al}\right) u^0_x(x,t) \\ \nonumber
&&\hspace{4cm} \left. + \sum_{k=1}^{N_0} \eps^{k\delta} \Theta^k_y\left( \frac{x}{\eps},\xi_{t/\eps^\al},x,t\right)  \right]  \sigma(\xi_{t/\eps^\al}) dB_t, \\ \nonumber
&& r^\eps(x,0) = 0,
\end{eqnarray}
\item the dynamics of $\hat r^\eps$ contains all other terms:
\begin{eqnarray}  \label{eq:def_hat_rho_eps}
d \hat r^\eps & = & (\cA^\eps \hat r^\eps) dt - \left(\kappa^1_y u^1_{x} + \kappa^2_y u^0_{xx} \right) \sigma(\xi_{t/ \eps^\al}) dB_t \\ \nonumber
\hat r^\eps(x,0) & = & 0,
\end{eqnarray}
\item $\rho^\eps$ satisfies
\begin{eqnarray} \label{eq:SPDE_eps_1}
d \rho^\eps & = & (\cA^\eps \rho^\eps) dt,
\end{eqnarray}
and has the initial condition $\rho^\eps(x,0) =  R_0^\eps(x)$ with
\begin{eqnarray} \label{eq:init_cond_R_eps}
\rho^\eps(x,0) & = & R_0^\eps(x) \\ \nonumber
&= &-\sum_{k=1}^{J_1}  \eps^{k-\al/2} \left[ \cI_k  + \sum_{\ell=1}^k \cI_{k-\ell}  \chi^{\ell-1}\left( \frac{x}{\eps}\right) \right] \partial_x^k u^0(x,0) .
\end{eqnarray}
\item $\widetilde r^\eps$ contains all negligible terms and satisfies
$$\mE \| \widetilde r^\eps\|^2_{L^2(\R \times (0,T))} \leq C \eps^{ \nu}.$$
\end{itemize}
Moreover if $r^\eps$ has a limit, then $\hat r^\eps$ defined by \eqref{eq:def_hat_rho_eps} converges to zero.
\end{proposition}
\begin{proof}
Indeed, gathering Lemmas \ref{lmm:first_abs_cont_part_al_geq_4} and \ref{lem:second_abs_term}, the remainder $R^\eps$ satisfies:
\begin{eqnarray} \label{eq:reste}
d R^\eps & =  & (\cA^\eps R^\eps) dt - \cM^\eps \sigma(\xi_{t/\eps^\al}) dB_t - \sum_{k=1}^{N_0}\eps^{k\delta-\al/2} w^k (x,t)dt \\ \nonumber
& - &  (m^\eps q(\xi_{t/\eps^\al}) dW_t  + r^{a,\eps} dt)
\end{eqnarray}
with
\begin{itemize}
\item a martingale term $\cM^\eps$ with ``large parameters'':
$$\cM^\eps =  \left[  \frac{1}{\eps} \Phi^1_y + \frac{1}{\eps} \sum_{k=1}^{N_0} \eps^{k\delta} \Theta^k_y +  \Phi^2_y   \right] $$
\item a martingale term $m^\eps$ of order smaller than $\eps^{\al/2}$:
$$m^\eps  =  \left[\eps \sum_{k=3}^{J_1+2} \eps^{k-3} \Phi^k_y  + \eps^{\delta}   \sum_{k=1}^{N_0} \eps^{(k-1)\delta} \Psi^k_y\right],$$
\item and a negligible term $r^{a,\eps}$ of order $\cO(\eps^\nu)$ (that is, convergence to zero in strong topology).
\end{itemize}
With $\cI_0=1$, $\cI_1=0$, we have for any $k\geq 1$, $v^k(x,0) = \cI_k \partial_x^k u^0(x,0)$ and thus:
\begin{eqnarray*}
&&\sum_{k=1}^{J_1} \eps^k \left[ v^k(x,0) + \sum_{\ell=1}^k \chi^{\ell-1}\left( \frac{x}{\eps}\right) \partial^{\ell}_x v^{k-\ell} \left(x,0\right)  \right] \\
&& \quad = \sum_{k=1}^{J_1} \eps^k \left[ \cI_k \partial_x^k u^0(x,0) + \sum_{\ell=1}^k\cI_{k-\ell} \chi^{\ell-1}\left( \frac{x}{\eps}\right)  \partial^{k}_x u^{0} \left(x,0\right)  \right] \\
&& \quad = \sum_{k=1}^{J_1} \eps^k \left[ \cI_k  + \sum_{\ell=1}^k\cI_{k-\ell} \chi^{\ell-1} \left( \frac{x}{\eps}\right) \right] \partial_x^k u^0(x,0) = -\eps^{\al/2}R_0^\eps(x).
\end{eqnarray*}
 Since $u^0(x,0)=u^\eps(x,0)=\imath(x)$, we deduce that $R^\eps$ satisfies the initial condition:
\begin{eqnarray*}
&&R^\eps(x,0) = R_0^\eps(x) + r_0^\eps(x)\\ \nonumber
&&= -\sum_{k=1}^{J_1}  \eps^{k-\al/2} \left[ \cI_k  + \sum_{\ell=1}^k \cI_{k-\ell}  \chi^{\ell-1}\left( \frac{x}{\eps}\right) \right] \partial_x^k u^0(x,0)+ r_0^\eps(x) .
\end{eqnarray*}
where $r^\eps_0 = \cO(\eps^{\nu} )$. By the linearity of equation \eqref{eq:reste} we obtain the desired decomposition.
In particular $\widetilde r^\eps$ satisfies
\begin{eqnarray*}
d \widetilde r^\eps & = & (\cA^\eps \widetilde r^\eps) dt - (m^\eps q(\xi_{t/\eps^\al}) dW_t  + r^{a,\eps} dt), \quad \widetilde r^\eps(x,0)=r_0^\eps(x).
\end{eqnarray*}
Very classical arguments and standard parabolic estimates prove that $\widetilde r^\eps$ goes to zero when $\eps$ goes to zero: $\mE \| \widetilde r^\eps\|^2_{L^2(\R \times (0,T))} \leq C \eps^{ \nu}.$

For the last assertion, we can apply the result concerning $r^\eps$ to $(1/\eps) \hat r^\eps$. This last quantity will converge in the same sense as $r^\eps$.
\end{proof}

Note that the term $R^\eps_0$ contains negative powers of $\eps$. Hence this term could have a priori a non trivial contribution in the behaviour of $\rho^\eps$ and thus of $R^\eps$. Nevertheless in Section \ref{sect:rest_init_cond} we show that we can choose the constants $\cI_k$ in such a way that the remainder $\rho^\eps$ converges strongly in $L^2(\R \times (0,T))$ to zero.

\section{Limit behaviour of the remainder} \label{sect:conv_sing_mart_part}

The goal of this section is to pass to the limit in the term  $r^\eps$.
Let us recall that in any case $r^\eps(x,0)=0$ and that the evolution of $r^\eps$ is given by \eqref{eq:dyn_r_eps_al<2} for $\al < 2$ and by \eqref{eq:SPDE_eps} for $\al > 2$. We can summarize these equations as follows:
\begin{eqnarray}  \label{eq:mart_problem}
dr^\eps &=& (\cA^\eps r^\eps) dt-\sum\limits_{k=1}^{K_0} \eps^{k\delta-\al/2} w^k(x,t) \, dt  \\ \nonumber
&-&\eps^{\varpi-1} \sum\limits_{k=0}^{K_0}\eps^{k\delta}\Upsilon^k\Big(\frac x\eps,\xi_\frac t{\eps^\alpha}\Big) u_x^{k}(x,t)\sigma(\xi_\frac t{\eps^\alpha}) \,dB_t
\end{eqnarray}
where
\begin{itemize}
\item $K_0$ is an integer such that: $(K_0+1)\delta \geq \max(2,\al/2)$,
\item $\Upsilon^k$ are defined on $\bT \times \R^n$ and smooth functions satisfying \eqref{eq:est_chi_al<2} and such that $\langle \Upsilon^k \rangle = 0$,
\item $\varpi = \max( 2-\alpha, 0)\geq 0$.
\end{itemize}
If $\varpi > 1$, i.e. $\al < 1$, we obtain a stronger convergence result (see Part \ref{ssect:conv_al<1}).

Let $\widetilde \Upsilon^k$ be a function such that $\partial_z \widetilde \Upsilon^k = \Upsilon^k$ with zero mean value w.r.t. $z$. And $\widetilde w^k_x = w^k$. Define $v^\eps$ as the solution of
\begin{eqnarray*}  \nonumber
dv^{\eps}  & = & a \left( \frac{x}{\eps},\xi_{t/\eps^\al}\right)v^{\eps}_{xx}  dt - \sum_{k=1}^{K_0} \eps^{k\delta-\al/2} \widetilde w^k(x,t) dt\\ 
& - & \eps^{\varpi} \sum_{k=0}^{K_0} \eps^{k\delta}    \widetilde \Upsilon^k\left( \frac{x}{\eps},\xi_{t/\eps^\al}\right) u^{k}_x(x,t) \sigma(\xi_\frac t{\eps^\alpha}) dB_t.  
\end{eqnarray*}
In the rest of this section, $\widetilde \cG^k$, for $k=0,\ldots,K_0$, are defined by
\begin{eqnarray*} 
\widetilde \cG^k\left( z,y,x,t\right) & = & \widetilde \Upsilon^k \left( z,y\right) u^{k}_x(x,t).
\end{eqnarray*}
Then $v^\eps_x = r^{\eps} + \check{v}^\eps$ where
\begin{eqnarray*}
&& d \check{v}^{\eps} = (\cA^\eps \check{v}^\eps)  dt -  \eps^{\varpi} \sum_{k=0}^{K_0} \eps^{k\delta}   \widetilde \Upsilon^k\left( \frac{x}{\eps},\xi_{t/\eps^\al}\right) u^{k}_{xx}(x,t)\sigma(\xi_\frac t{\eps^\alpha}) dB_t.
\end{eqnarray*}
Since $r^\eps(x,0)=0$, we assume that $v^\eps(x,0)=\check{v}^\eps(x,0)=0$. We are going to show that $\check{v}^\eps$ asymptotically vanishes and thus  the limit  behaviour of $r^{\eps}$ depends only on $v^\eps$.
\begin{lemma}
$\check{v}^\eps$ tends to 0 in $\bL^2(\R \times (0,T))$  in probability.
\end{lemma}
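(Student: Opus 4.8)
The plan is to obtain an energy estimate for $\check v^\eps$ directly from its defining SPDE, exploiting the uniform ellipticity of the coefficient $a(x/\eps,\xi_{t/\eps^\al})$ and the explicit power $\eps^\varpi\ge 1$ (in fact $\varpi\ge 0$, with the small exponents only helping) in front of the stochastic forcing. First I would apply It\^o's formula to $\tfrac12\|\check v^\eps(\cdot,t)\|_{L^2(\R)}^2$. The drift term $\int_\R a(x/\eps,\xi_{t/\eps^\al})\check v^\eps_{xx}\check v^\eps\,dx=-\int_\R a(x/\eps,\xi_{t/\eps^\al})|\check v^\eps_x|^2\,dx$ (after integration by parts, using the compact spatial support / decay) is bounded above by $-\lambda^{-1}\|\check v^\eps_x(\cdot,t)\|_{L^2}^2$ by assumption \ref{a3}. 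The quadratic variation term is $\eps^{2\varpi}\sum_{k}\eps^{2k\delta}\int_\R |\widetilde\Upsilon^k(x/\eps,\xi_{t/\eps^\al})|^2\,|u^k_{xx}(x,t)|^2\,|\sigma(\xi_{t/\eps^\al})|^2\,dx$, which, because $\widetilde\Upsilon^k$ satisfies \eqref{eq:est_chi_al<2}, $\sigma$ satisfies \ref{a5}, and $u^k$ obeys the weighted estimate \eqref{eq:est_uzero}, is bounded (in expectation) by $C\eps^{2\varpi}\bE[(1+|\xi_{t/\eps^\al}|)^{N}]\le C\eps^{2\varpi}$ uniformly in $t$, using that $\xi$ is stationary with the rapidly decaying invariant density $p$.

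Next I would take expectations, killing the martingale part $\int \check v^\eps\,d(\text{mart})$ (a localization argument, standard since the integrand has all moments), and obtain
\begin{equation*}
\bE\|\check v^\eps(\cdot,t)\|_{L^2}^2 + \tfrac{2}{\lambda}\int_0^t\bE\|\check v^\eps_x(\cdot,s)\|_{L^2}^2\,ds \le C\eps^{2\varpi}\,T.
\end{equation*}
Since $\varpi\ge 0$, this already gives $\bE\|\check v^\eps\|_{L^2(\R\times(0,T))}^2\le C\eps^{2\varpi}T^2\le CT^2$, which is a uniform bound but not yet convergence to zero when $\varpi=0$ (the case $\al\ge 2$). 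To get genuine smallness I would instead exploit the structure $\widetilde\Upsilon^k=\partial_z\widetilde{\widetilde\Upsilon}{}^k$ is not available; rather the right move is to integrate by parts in the stochastic forcing: write $\widetilde\Upsilon^k(x/\eps,\xi)u^k_{xx}(x,t)=\eps\,\partial_x[\Xi^k(x/\eps,\xi)u^k_{xx}] - \eps\,\Xi^k(x/\eps,\xi)u^k_{xxx}$ where $\partial_z\Xi^k=\widetilde\Upsilon^k$ (this is exactly the divergence-form trick used for $H^\eps$ in the proof of Proposition~\ref{prop:weak_conv_al<2}). This produces an extra factor $\eps$, so that the effective forcing is of order $\eps^{\varpi+1}$ in $H^{-1}$, i.e. the estimate improves to $\bE\|\check v^\eps\|_{L^2(0,T;H^1)}^2\le C\eps^{2(\varpi+1)-?}$; in any event the gain of one power of $\eps$ from the oscillation upgrades the bound from $O(1)$ to $O(\eps^{2})$ (or better) in $L^2(\R\times(0,T))$, giving convergence to zero in $\bL^2$ and, via Markov's inequality, in probability.

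The main obstacle I anticipate is the spatial integration by parts at infinity: $\check v^\eps$ is not compactly supported, so one must justify the boundary terms vanish, which requires an a priori control of the spatial decay of $\check v^\eps$ — this follows by propagating the fast decay of $u^k$ and $u^k_{xx}$ (from \eqref{eq:est_uzero}) through the parabolic equation, but it needs a weighted energy estimate (multiply by $(1+|x|)^{2N}$ and absorb the commutator with $a^\eps\partial_{xx}$). A secondary technical point is the moment bound $\bE[(1+|\xi_{t/\eps^\al}|)^N]\le C_N$ uniformly in $t$ and $\eps$, which is immediate from stationarity and the estimate $(1+|y|)^Np(y)\le C_N$ quoted after assumption \ref{a6}. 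Once these two points are in place, the argument is a routine Gr\"onwall-type estimate, exactly parallel to the treatment of $\widetilde r^\eps$ in Propositions~\ref{prop:formal_exp_al<2} and~\ref{prop:dev_al}.
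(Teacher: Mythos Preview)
Your direct energy estimate works cleanly when $\varpi>0$ (i.e.\ $\al<2$): the It\^o correction in $d\|\check v^\eps\|_{L^2}^2$ is $O(\eps^{2\varpi})$ and you are done. The difficulty is entirely in the case $\varpi=0$ ($\al>2$), and here your proposed fix has a genuine gap. Writing $\widetilde\Upsilon^k(x/\eps,\cdot)\,u^k_{xx}=\eps\,\partial_x\bigl[\Xi^k(x/\eps,\cdot)u^k_{xx}\bigr]-\eps\,\Xi^k u^k_{xxx}$ does put the forcing in $H^{-1}$ with an extra $\eps$, but the energy identity for an SPDE $du=\cA^\eps u\,dt+G\,dB_t$ involves the It\^o correction $\|G\|_{L^2}^2\,dt$, \emph{not} $\|G\|_{H^{-1}}^2\,dt$. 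Concretely, the correction coming from the divergence part is $\eps^2\|\partial_x(\Xi^k u^k_{xx})\|_{L^2}^2$, and since $\partial_x(\Xi^k(x/\eps,\cdot))=\eps^{-1}\widetilde\Upsilon^k(x/\eps,\cdot)$, this is again $O(1)$. The $H^{-1}$ gain that works for deterministic right-hand sides (as in the treatment of $H^\eps$ in Proposition~\ref{prop:weak_conv_al<2}) does not transfer to stochastic forcing in a plain $L^2$ energy estimate.

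The paper takes a completely different route: it does not estimate $\check v^\eps$ directly at all. Instead it observes that the equation for $\check v^\eps$ has exactly the structure of the $r^\eps$ problem \eqref{eq:mart_problem}, but with the stochastic forcing carrying one more power of~$\eps$ (prefactor $\eps^{\varpi}$ instead of $\eps^{\varpi-1}$). Hence the entire machinery of Section~\ref{sect:conv_sing_mart_part} --- the correctors $P^k,Q^k,U^{k,\ell}$, the $H^1$ bound of Proposition~\ref{prop:behaviour_v_eps}, tightness, and limit identification --- applies to $\eps^{-1}\check v^\eps$ and shows that this rescaled quantity converges in law. Slutsky's theorem then gives $\check v^\eps=\eps\cdot(\eps^{-1}\check v^\eps)\to 0$ in probability. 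This is the same reduction used for $\hat r^\eps$ at the end of Proposition~\ref{prop:dev_al}. In short, the paper leverages the full weak-convergence argument rather than a naive energy bound, and that is precisely what is needed to extract the extra smallness from the spatial oscillation of $\widetilde\Upsilon^k$ when $\varpi=0$.

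A minor point: your integration by parts $\int a\,\check v^\eps_{xx}\check v^\eps\,dx=-\int a|\check v^\eps_x|^2\,dx$ is only correct if the operator is the divergence-form $\cA^\eps=\partial_x(a\partial_x)$; with the non-divergence $a\partial_{xx}$ written in the paper there is an extra term $-\int a_x\check v^\eps\check v^\eps_x\,dx$ of size $O(\eps^{-1})$. Differentiating the $v^\eps$ equation shows the correct operator for $\check v^\eps$ is indeed $\cA^\eps$, so your step is fine in substance, but be aware of the discrepancy.
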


\begin{proof}
{\color{blue}
Clearly, it suffices to prove that the statement of Lemma holds for the function $\check{v}_m^\eps$ being a solution to the following
problem:
\begin{equation*}
d \check{v}_m^{\eps} = {(\cA^\eps \check{v}_m^\eps)}  dt -  \eps^{\varpi}    \widetilde \Upsilon^0\left( \frac{x}{\eps},\xi_{t/\eps^\al}\right) u^{0}_{xx}(x,t)\sigma(\xi_\frac t{\eps^\alpha}) dB_t,\quad \check{v}_m^{\eps}(x,0)=0.
\end{equation*}
If  $\alpha<2$, then $\varpi>0$ and the required statement is evident. So we only dwell on the case $\alpha>2$. In this case
$\varpi=0$.
Observe that the expectation of $\bL^2(\R \times (0,T))$ norm of $\check{v}_m^\eps$ is bounded.  Indeed,
denoting $\mathcal N^0_m= \eps^\varpi\widetilde \Upsilon^0\left( \frac{x}{\eps},\xi_{t/\eps^\al}\right) u^{0}_{xx}(x,t)\sigma(\xi_\frac t{\eps^\alpha})$, and applying Ito's formula to $\|\check{v}_m^\eps\|^2_{L^2(\mathbb R)}$ we obtain
$$
d\|\check{v}_m^\eps\|^2_{L^2(\mathbb R)}=-2\big({\textstyle a(\frac \cdot\eps,\xi_{t/\eps^\alpha})}\check{v}_{m,x}^\eps,
\check{v}_{m,x}^\eps\big)_{L^2(\mathbb R)}dt+2\big(\check{v}_{m}^\eps,
\mathcal{N}^0_m\big)_{L^2(\mathbb R)}dB_t
$$
$$
+\big(\mathcal{N}^0_m,
\mathcal{N}^0_m\big)_{L^2(\mathbb R)}dt
$$
Taking the expectation and using the BDG inequality yields
$$
\mathbf{E}\Big(\|\check{v}_m^\eps\|^2_{L^\infty(0,T;L^2(\mathbb R)})
+\int_0^T\|\check{v}_{m,x}^\eps\|^2_{L^2(\mathbb R)}dt\Big)\leq C.
$$
Next, for an arbitrary $\varphi\in C_0^\infty(\mathbb R)$ consider a function
$$
\boldsymbol{\mu}_t^\eps=\big(\check{v}_m^\eps, \varphi\big)+\sum\limits_{k=1}^{K_0}\eps^{1+k\delta}
\big({\textstyle \mathtt{Q}^{k-1}(\frac\cdot\eps,\xi_{t/\eps^\alpha})}\varphi_x,\check{v}_m^\eps\big)
+\sum\limits_{k=0}^{K_0}\eps^{1+k\delta}
\big({\textstyle \boldsymbol{\chi}^{k}(\frac\cdot\eps)}\varphi_x,\check{v}_m^\eps\big),
$$
where $\boldsymbol{\chi}^0$ coincides with  ${\chi}^0$ defined in
\eqref{eq:def_chi_0_al>2}, \
$\boldsymbol{\chi}^k$, $k\geq 1$, solves the equation
$$
(\overline a\boldsymbol{\chi}^k_z)_z=-\overline{A\mathtt{Q}^{k-1}},\qquad \langle\boldsymbol{\chi}^k\rangle=0,
$$
$\mathtt{Q}^0$ and $\mathtt{Q}^k$ are solutions of
$$
\mathcal{L}\mathtt{Q}^0=-a_z(z,y)+\overline{a}_z(z)+(A-\overline{A})\boldsymbol{\chi}^0,\qquad
\langle\mathtt{Q}^0\rangle=0,
$$
and
$$
\mathcal{L}\mathtt{Q}^k=-(A-\overline{A})\boldsymbol{\chi}^k-(A\mathtt{Q}^{k-1}-\overline{A\mathtt{Q}^{k-1}}),\qquad
\langle\mathtt{Q}^k\rangle=0,
$$
By the Ito formula, considering the definition of $\boldsymbol{\chi}^k$ and $\mathtt{Q}^k$, we obtain
$$
{\textstyle
d\boldsymbol{\mu}_t^\eps=\big[\big(a^\eps,\varphi_{xx}\check v_m^\eps\big)-\big([a^\eps\boldsymbol{\chi}^0\big(\frac\cdot\eps\big)]_z,\varphi_{xx}\check v_m^\eps\big)-
\big(a^\eps_z\boldsymbol{\chi}^0\big(\frac\cdot\eps\big),\varphi_{xx}\check v_m^\eps\big)
\big]dt
}
$$
$$
+\big(\mathcal{N}^0_m,\varphi\big)dB_t +d\zeta^\eps(t),
$$
where $\mathbf{E}\big(\sup\limits_{0\leq t\leq T}|\zeta^\eps(t)|\big)\longrightarrow 0$ as $\eps\to0$. Since
$\langle\mathcal{N}^0_m\rangle=0$, 
$$
\sup\limits_{0\leq t\leq T}\Big|\int_0^t (\mathcal{N}^0_m, \varphi)dB_s\Big| \to 0
$$
in probability.
Therefore, $\check v^\eps_m$ converges in law in $C_w(0,T;L^2(\mathbb R))\cap L^2(0,T;H_w^1(\mathbb R))$ to a solution of the problem
$$
\partial_t \check v^0_m=a^{\rm eff}\check v^0_{m, xx},    \qquad \check v^0_{m}(x,0)=0.
$$
Since the only solution of this problem is equal to zero, the required statement follows.
}
%
\end{proof}

\subsection{Construction of correctors}

The correctors $P^k$ and $Q^k$ are given by the equations
\begin{equation} \label{eq:def_P_0_Q_0}
(\bar a(z) P^0(z) )_{zz} = 0, \quad \langle P^0 \rangle = 1,\qquad \cL Q^0 = ((\bar a - a) P^0)_{zz}
\end{equation}
and for $k\geq 1$
\begin{align} \label{eq:def_P_k}
(\bar a(z) P^k(z) )_{zz} & =  -  \overline{(Q^{k-1} a)_{zz}}, \quad \langle P^k \rangle = 1,\\ \label{eq:def_Q_k}
\cL Q^k(z,y) & =  ((\bar a - a) P^k)_{zz} + \overline{(Q^{k-1} a)_{zz}} - (Q^{k-1} a)_{zz}.
\end{align}

\begin{lemma}
The functions $P^k$ are smooth periodic functions defined on $\mathbb{T}$ and $Q^k$ are smooth functions on $\R\times \R^n$, bounded in $z$ and of at most linear growth w.r.t. $y$.
\end{lemma}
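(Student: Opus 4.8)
The plan is to argue by induction on $k$, at each level first solving the periodic second–order ODE for $P^k$ on $\mathbb{T}$ by direct integration, and then the Poisson equation for $Q^k$ by invoking the solvability and regularity theory of \cite{pard:vere:01} for the operator $\cL$ — exactly as was done in Section~\ref{ssect:aux_pb} for $Q^0$ in \eqref{eqdefQ0} and for $\kappa^0$ in \eqref{eq:def_kappa_0_al>2}.

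For the base case, the equation $(\bar a(z)P^0(z))_{zz}=0$ integrates to $\bar a P^0=c_1z+c_2$, and periodicity forces $c_1=0$, so $P^0=c_2/\bar a$. Since $a$ is smooth and satisfies \ref{a3}, the average $\bar a(z)=\int_{\R^n}a(z,y)p(y)\,dy$ is smooth and bounded below by $\lambda^{-1}$, so $P^0$ is smooth and periodic, with $c_2=\langle 1/\bar a\rangle^{-1}$ fixed by $\langle P^0\rangle=1$. For $Q^0$, note that $\bar a$ is by definition the $p$–average of $a$, hence $\overline{(\bar a-a)}\equiv 0$; since $\partial_z$ commutes with the $p$–average, the right–hand side $((\bar a-a)P^0)_{zz}$ has zero mean with respect to $p$ for each $z$ and is smooth in $z$ and bounded. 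By \cite{pard:vere:01}, under \ref{a4} and \ref{a6} the equation $\cL Q^0=((\bar a-a)P^0)_{zz}$ has a solution, unique once an additive constant in $y$ is fixed (say by $\int_{\mathbb T}Q^0\,dz=0$), of at most linear growth in $y$; differentiating in $z$ (which commutes with $\cL$) shows each $\partial_z^m Q^0$ solves a Poisson equation with smooth bounded data, so $Q^0$ is smooth in $z$ with all $z$–derivatives of at most linear growth, while smoothness in $y$ follows from hypoellipticity of $\cL$ under \ref{a4}, \ref{a5}.

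For the induction step, assume $P^{k-1},Q^{k-1}$ enjoy the stated properties. The function $G(z):=-\overline{(Q^{k-1}a)_{zz}}$ is well defined and smooth on $\mathbb{T}$, because $p$ decays faster than any polynomial so the $y$–integral of the at most linearly growing $(Q^{k-1}a)_{zz}$ converges, and $\langle G\rangle=0$ since the torus average of a second $z$–derivative of a periodic function vanishes. Integrating $(\bar a P^k)_{zz}=G$ twice and choosing the two integration constants so that $\bar a P^k$ is periodic and $\langle P^k\rangle=1$ (possible since $\langle 1/\bar a\rangle>0$) yields a unique smooth periodic $P^k$. For $Q^k$, the right–hand side $((\bar a-a)P^k)_{zz}+\overline{(Q^{k-1}a)_{zz}}-(Q^{k-1}a)_{zz}$ has zero $p$–mean — the first term as in the base case, the last two cancelling on averaging — is smooth in $z$, and grows at most linearly in $y$ by the induction hypothesis; \cite{pard:vere:01} then provides the unique normalised solution $Q^k$, and its $z$– and $y$–regularity and linear growth follow as for $Q^0$.

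The routine parts are the ODE integrations and the citation of the Poisson theory; the delicate points are the verification at every level of the two centering identities $\overline{(\bar a-a)}=0$ and $\langle(\cdot)_{zz}\rangle=0$ that make the $Q^k$– and $P^k$–equations solvable, and the control of the growth in $y$ along the recursion. The latter is precisely where the second $z$–derivative structure of the right–hand sides, the subtraction of the $p$–mean, and the recurrence assumption \ref{a6} (through the estimates of \cite{pard:vere:01}, compare \eqref{eq:est_kappa_1}) are used to keep all $Q^k$ of (at most) linear growth.
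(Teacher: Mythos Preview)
Your proof is correct and follows essentially the same inductive approach as the paper: explicit integration of the periodic ODE for $P^k$ on $\mathbb{T}$, followed by the Pardoux--Veretennikov solvability theory for the Poisson equation defining $Q^k$. You are in fact more explicit than the paper, which after handling $k=0$ simply writes ``by recursion we obtain the desired result''; in particular, you spell out the two compatibility conditions $\langle(\cdot)_{zz}\rangle=0$ and $\overline{(\bar a-a)}=0$ that make the $P^k$-- and $Q^k$--equations solvable at each step, and you note (as the paper does elsewhere for $\kappa^0$) that commuting $\partial_z$ with $\cL$ propagates the $z$--regularity.
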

\begin{proof}
Indeed let us begin with $k=0$ (Equation \eqref{eq:def_P_0_Q_0}). $P_0$ satisfies:
\begin{equation*}
(\bar a(z) P^0 )_{zz} = 0, \quad \langle P^0 \rangle = 1.
\end{equation*}
Hence $P^0 = 1 + \chi^1_z$ and classical computations for the dimension one show that
$$P^0(z) = \aeff \frac{1}{\bar a(z)}.$$
Next for $Q^0$ we have:
$$ \cL Q^0 (z,y)= ((\bar a - a) P^0)_{zz}.$$
Again here $z$ is a parameter of the equation. The right-hand side has zero mean value w.r.t. $y$ and is a smooth bounded function of the two variables $y$ and $z$. As was already shown there exists a unique solution $Q^0$ which is smooth w.r.t. $y$ and $z$,  bounded w.r.t. $z$ and of at most linear growth w.r.t. $y$. Then from \eqref{eq:def_P_k} and \eqref{eq:def_Q_k} and by recursion we obtain the desired result.
\end{proof}

Let us introduce the following notations: for $k\geq 0$ and $m\geq 0$
\begin{equation*} 
\Xi^{k,m} (x,t) = \overline{ \langle Q^k_y\left( .,. \right) \widetilde \cG^m \left(.,.,x,t\right)  \rangle \sigma (.) }.
\end{equation*}
The correctors $U^{k,\ell}$ are solution of the problem:
\begin{equation}\label{eq:def_U_k}
\cL U^{k,\ell}(x,t,y) +2 \langle Q^k_y\left( .,y \right) \widetilde \cG^\ell \left(.,y,x,t\right)  \rangle \sigma (y) - \Xi^{k,\ell} (x,t) = 0.
\end{equation}
Moreover
$$\widetilde P^k_z = P^k - \langle P^k\rangle = P^k -1\quad \mbox{and} \quad \widetilde Q^k_z = Q^k.$$
The correctors $\widetilde \Upsilon^{k}$ verify the inequality \eqref{eq:est_kappa_1}
\begin{equation*}
|\widetilde \Upsilon^{k}(z,y)| \leq C(1+|y|^p), \quad \forall (z,y) \in \mathbb T \times \R^n.
\end{equation*}
Thereby from the previous lemma, we deduce  the following result:
\begin{lemma}
The functions $U^{k,\ell}$ given by \eqref{eq:def_U_k} are well defined and smooth and also exhibit at most
polynomial growth in $y$.
\end{lemma}

Finally we define the constant $\cC_{k,m}$, $0\leq k\leq m$ by
\begin{equation} \label{eq:cC_k_m}
 \cC_{k,m} =  \overline{ \langle Q^{\ell}_y\left( .,. \right) \widetilde \Upsilon^{k} \left(.,.\right) \rangle \sigma (.) } .
 \end{equation}

\subsection{Convergence of the antiderivative $v^\eps$}

We first prove the boundedness of $v^\eps$ in $H^1(\R)$, then obtain a tightness result and finally we identify the weak limit.

\subsubsection{Bound in $H^1(\R)$ for $v^\eps$} \label{ssect:mild_sol}

Let us consider the quantity
\begin{eqnarray*}
\cV^\eps_t & = & \sum_{k=0}^{K_0} \eps^{k\delta} \left[ \blla P^k\left( \frac{.}{\eps}\right) v^\eps(.,t) ,  v^{\eps}(.,t) \brra + \eps^\delta \blla Q^k\left( \frac{.}{\eps},\xi_{t/\eps^\alpha} \right)  v^\eps(.,t), v^{\eps}(.,t) \brra \right] \\
& + & \eps^\varpi \sum_{k=0}^{K_0} \sum_{\ell=0}^{K_0}\eps^{(k+\ell+1)\delta+\al/2}\blla U^{k,\ell}(.,t,\xi_{t/\eps^\al}),v^\eps(.,t)\brra
\end{eqnarray*}
where $P^k$, $Q^k$ and $U^{k,\ell}$ are the correctors defined respectively by \eqref{eq:def_P_0_Q_0}, \eqref{eq:def_P_k}, \eqref{eq:def_Q_k} and \eqref{eq:def_U_k}. The bracket $\lla .,.\rra$ stands for the scalar product in $L^2(\R)$.

Again by It\^o's formula and the very definition of all correctors, we deduce:
\begin{lemma}\label{lmm:dynamic_V_eps}
The quantity $\cV^\eps$ satisfies:
\begin{eqnarray}\label{eq:V_eps_final}
&&d\cV^\eps_t = \cB^\eps_t dt + M^\eps_t \sigma(\xi_\frac t{\eps^\alpha}) dB_t \\ \nonumber
&& \quad +  \eps^{(K_0+1)\delta - 2} \blla (Q^{K_0}\left( \frac{.}{\eps},\xi_{t/\eps^\alpha} \right) a^\eps)_{zz}   v^\eps(.,t), v^\eps(.,t) \brra dt\\ \nonumber
&& \quad -   2 \sum_{k=0}^{K_0} \eps^{k\delta}  \blla\left(P^k\left( \frac{.}{\eps}\right) + \eps^\delta Q^k\left( \frac{.}{\eps},\xi_{t/\eps^\alpha} \right) \right) v_x^\eps(.,t) , a^\eps v^\eps_{x}(.,t)   \brra  dt \\ \nonumber
&& \quad  +  2 \sum_{k=0}^{K_0} \sum_{m =0}^{K_0-1} \eps^{(k+m+1) \delta-\al/2}  \lla   \widetilde w^{m+1} \left( .,t\right)+ \eps^{\varpi} \Xi^{k,m} (.,t), v^\eps(.,t) \rra dt \\ \nonumber
&& \quad  +   \lla \mathcal{NT}^{1,\eps}(.,t) , v^\eps(.,t) \rra  dt - \lla\mathcal{NT}^{2,\eps}(.,t) , v^\eps_{x}(.,t) \rra dt,
\end{eqnarray}
where:
\begin{itemize}
\item $M^\eps$ 
stands for the integrand in the stochastic integral w.r.t. the Brownian motion $B$:
\begin{equation} \label{eq:def_martin_term_H_1}
M^\eps_t =  \sum_{k=0}^{K_0} \eps^{(k+1)\delta} \eps^{-\al/2}\blla  Q^k_y\left( \frac{.}{\eps},\xi_{t/\eps^\alpha} \right) v^\eps\left( .,t \right) ,v^\eps(.,t) \brra   + \eps^{\varpi} \widetilde M^\eps_t
\end{equation}
where in $\widetilde M^\eps$, all powers of $\eps$ are non-negative.
\item the term $\cB^\eps$ 
does not depend on $v^\eps$ and is bounded uniformly w.r.t. $\eps$: for any $p \geq 1$, there exists a constant $\hat p \geq 1$ such that
$$\mE |\cB^\eps_t |^p \leq C\mE (1+|\xi_{t/\eps^\al}|^{\hat p})\, ;$$
\item there exists $\nu > 0$ such that for any $N>0$ the two terms $\mathcal{NT}^{1,\eps}$ and $ \mathcal{NT}^{2,\eps}$ (and their derivatives w.r.t. $x$) 
satisfy
\begin{equation} \label{eq:estim_neglig_terms}
\mE |\mathcal{NT}^{.,\eps}(x,t) |^p \leq \frac{C \eps^{\nu} }{(1+|x|)^{N}} \mE (1+|\xi_{t/\eps^\al}|^{\hat p}).
\end{equation}
\end{itemize}
The constant $C$ here does not depend on $\eps$.
\end{lemma}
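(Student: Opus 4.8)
The plan is to derive \eqref{eq:V_eps_final} by applying It\^o's formula to the functional $\cV^\eps_t$ and then sorting the resulting terms according to their order in $\eps$. Throughout one uses the dynamics \eqref{eq:dynamic_theta} of $v^\eps$ and the elementary fact that $s\mapsto\xi_{s/\eps^\al}$ is a diffusion with generator $\eps^{-\al}\cL$, so that for any smooth $g$ of at most polynomial growth $d\big[g(\xi_{t/\eps^\al})\big]=\eps^{-\al}(\cL g)(\xi_{t/\eps^\al})\,dt+\eps^{-\al/2}\big((\nabla_y g)\,\sigma\big)(\xi_{t/\eps^\al})\,dB_t$. Since the quadratic blocks of $\cV^\eps$ pair $v^\eps$ against $a^\eps v^\eps_{xx}$, two integrations by parts are needed; I would first carry out the whole computation on smooth, fast-decaying (Galerkin) approximants of $v^\eps$, for which $v^\eps\in H^2(\R)$, and then pass to the limit in the identity obtained, which is legitimate because under Condition~{\bf (A)} the correctors and the data are smooth.

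The first step is to locate the drift terms carrying negative powers of $\eps$. For fixed $g=g(z)$, two integrations by parts give $\lla g(\tfrac{\cdot}{\eps})v^\eps,a^\eps v^\eps_{xx}\rra=-\lla g(\tfrac{\cdot}{\eps})v^\eps_x,a^\eps v^\eps_x\rra+\tfrac12\eps^{-2}\lla (ga)_{zz}(\tfrac{\cdot}{\eps},\xi_{t/\eps^\al})v^\eps,v^\eps\rra$; this yields both the term $-2\sum_k\eps^{k\delta}\lla(P^k+\eps^\delta Q^k)v^\eps_x,a^\eps v^\eps_x\rra$ of \eqref{eq:V_eps_final} and a family of drifts of order $\eps^{k\delta-2}$. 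The $\cL$-part of the It\^o derivative of the $\xi$-dependent correctors $Q^k(\tfrac{\cdot}{\eps},\xi)$ appearing in $\cV^\eps$ produces drifts of order $\eps^{(k+1)\delta-\al}$. The crucial point is that the choice $\delta=|\al-2|$ makes $(k+1)\delta-\al$ and $k\delta-2$ coincide (up to a unit shift in~$k$), so these two families sit at the same orders in $\eps$; the defining relations $(\bar aP^0)_{zz}=0$, $(\bar aP^k)_{zz}=-\overline{(Q^{k-1}a)_{zz}}$ and $\cL Q^k=((\bar a-a)P^k)_{zz}+\overline{(Q^{k-1}a)_{zz}}-(Q^{k-1}a)_{zz}$ of \eqref{eq:def_P_0_Q_0}--\eqref{eq:def_Q_k} are then exactly what is needed for these drifts to telescope: the operator $\cL Q^k$ absorbs the fluctuating part $(a-\bar a)$, the equation for $P^k$ handles the mean $\bar a$, and everything cancels except the single leftover attached to the top corrector, namely $\eps^{(K_0+1)\delta-2}\lla(Q^{K_0}a^\eps)_{zz}v^\eps,v^\eps\rra\,dt$; by the defining property of $K_0$ this exponent is non-negative.

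The second step is the cross terms and the bookkeeping of the remaining pieces. Pairing $v^\eps$ against the $\widetilde w^k$-drift of \eqref{eq:dynamic_theta}, after splitting $P^k=1+\widetilde P^k_z$ (so the oscillating part of $P^k$ gains an extra factor $\eps$ and falls into the remainder), produces the $v^\eps$-linear drift $2\sum_{k,m}\eps^{(k+m+1)\delta-\al/2}\lla\widetilde w^{m+1},v^\eps\rra$. The cross-variation between the noise of $v^\eps$ (the $\widetilde\Upsilon^k$-forcing) and the noise of the $\xi$-dependent correctors generates further $v^\eps$-linear drifts built from $Q^k_y\,\widetilde\cG^m\,\sigma$ evaluated at $(\tfrac{\cdot}{\eps},\xi)$; their averaged part is precisely what the linear correctors $U^{k,\ell}$ are built to remove, since the $\cL$-part of the It\^o derivative of $\lla U^{k,\ell}(\cdot,t,\xi),v^\eps\rra$ equals $\eps^{\varpi+(k+\ell+1)\delta-\al/2}\lla(\cL U^{k,\ell})v^\eps\rra$ with $\cL U^{k,\ell}=\Xi^{k,\ell}-2\langle Q^k_y\widetilde\cG^\ell\rangle\sigma$ by \eqref{eq:def_U_k}: the second piece cancels the averaged cross-variation and the first leaves exactly the $\eps^\varpi\Xi^{k,m}$ contribution of \eqref{eq:V_eps_final}. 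All remaining pairings --- $v^\eps$ against residual drifts with a strictly positive power of $\eps$, truncation remainders involving the $w^k$ and derivatives of the $u^k$ beyond the retained range, and the mixed oscillation/forcing cross-variations --- come with a factor $\eps^\nu$, $\nu>0$, together with the spatial decay of $\partial_x^j u^k$ inherited from \eqref{eq:est_uzero}; assembling them gives $\lla\mathcal{NT}^{1,\eps},v^\eps\rra-\lla\mathcal{NT}^{2,\eps},v^\eps_x\rra$ with the bound \eqref{eq:estim_neglig_terms}, the polynomial factor in $\xi_{t/\eps^\al}$ stemming from the at-most-polynomial growth in $y$ of $Q^k$, $U^{k,\ell}$ and $\widetilde\Upsilon^k$. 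Finally, $M^\eps$ collects all $dB_t$-integrands: the $v^\eps$-quadratic ones $\eps^{(k+1)\delta-\al/2}\lla Q^k_y(\tfrac{\cdot}{\eps},\xi)v^\eps,v^\eps\rra$ from the martingale part of $d[Q^k(\xi)]$, displayed separately because $(k+1)\delta-\al/2$ may be negative, and the rest (the $v^\eps$-linear noise from the $\widetilde\Upsilon^k$-forcing and the $U^{k,\ell}$-martingale), which carry a factor $\eps^\varpi$, $\varpi\ge0$, and form $\eps^\varpi\widetilde M^\eps$; while $\cB^\eps$ is the unique $v^\eps$-independent term, namely the quadratic-variation correction $\sum_k\eps^{k\delta}\int\big(P^k+\eps^\delta Q^k\big)(\tfrac{x}{\eps},\xi)\,d[v^\eps(x)]\,dx$ with $d[v^\eps(x)]/dt=\eps^{2\varpi}\big|\sum_k\eps^{k\delta}u^k_x(x)\widetilde\Upsilon^k(\tfrac{x}{\eps},\xi)\sigma(\xi)\big|^2$, so that all $\eps$-powers in it are non-negative and $\mE|\cB^\eps_t|^p\le C\,\mE(1+|\xi_{t/\eps^\al}|^{\hat p})$. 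Putting these pieces together yields \eqref{eq:V_eps_final}.

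I expect the main obstacle to be purely the bookkeeping: making sure that no It\^o term has been dropped --- in particular both integration-by-parts contributions and every cross-variation between the noise of $v^\eps$ and the noise carried by the $\xi$-dependent correctors --- and checking that the choice $\delta=|\al-2|$ really aligns the $\eps^{-\al}$-generator drifts with the $\eps^{-2}$-curvature drifts so that the recursion \eqref{eq:def_P_0_Q_0}--\eqref{eq:def_Q_k} telescopes them all the way down to the single surviving term of order $\eps^{(K_0+1)\delta-2}$, with everything else landing at order $\eps^\nu$ for some $\nu>0$.
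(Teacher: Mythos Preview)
Your approach is correct and mirrors the paper's: apply It\^o's formula to $\cV^\eps$, use two integrations by parts on the quadratic pairings $\lla g(\tfrac{\cdot}{\eps})v^\eps,a^\eps v^\eps_{xx}\rra$, let the recursions \eqref{eq:def_P_0_Q_0}--\eqref{eq:def_Q_k} telescope the singular drifts down to the single leftover $\eps^{(K_0+1)\delta-2}\lla(Q^{K_0}a^\eps)_{zz}v^\eps,v^\eps\rra$, and let \eqref{eq:def_U_k} absorb the $z$-averaged cross-variation, the $z$-fluctuating part being pushed into $\mathcal{NT}^{1,\eps}$ after gaining an $\eps$ via an antiderivative in $z$. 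One minor omission in your bookkeeping: $\cB^\eps$ is not only the It\^o correction $\sum_k\eps^{k\delta}\lla(P^k+\eps^\delta Q^k)\,d[v^\eps]/dt\rra$; the $v^\eps$-independent pairings $\lla U^{k,\ell},a^\eps v^\eps_{xx}\rra$-free pieces arising from the $U^{k,\ell}$-block (namely $\lla U^{k,\ell},\widetilde\cG^m\rra$ and $\lla U^{k,\ell},\widetilde w^m\rra$) also land there, all with non-negative $\eps$-exponents.
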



The last double sum in \eqref{eq:V_eps_final} can be written as:
\begin{eqnarray*}
&& 2 \sum_{k=0}^{K_0} \sum_{m =0}^{K_0-1}  \eps^{(k+m+1)\delta - \al /2} \lla \eps^\varpi \Xi^{k,m}(.,t)+  \widetilde w^{m+1}(.,t), v^\eps(.,t)\rra\\
&& \quad = 2 \sum_{\ell=0}^{K_0-1}  \eps^{(\ell+1)\delta - \al /2} \sum_{m=0}^{\ell} \lla \eps^{\varpi} \Xi^{\ell-m,m}(.,t) + \widetilde w^{m+1}(.,t), v^\eps(.,t)\rra \\
&& \quad + \lla \mathcal{NT}^{3,\eps}(.,t), v^\eps(.,t)\rra,
\end{eqnarray*}
with
\begin{equation} \label{eq:def_neglig_3}
 \mathcal{NT}^{3,\eps} (.,t) =   2 \sum_{k+m\geq K_0}  \eps^{(k+m+1)\delta - \al /2} \left( \eps^\varpi \Xi^{k,m}(.,t)+  \widetilde w^{m+1}(.,t)\right).
 \end{equation}
Again since $(K_0+1)\delta > \al /2 $, all powers of $\eps$ in \eqref{eq:def_neglig_3} are positive and thus $ \mathcal{NT}^{3,\eps} $ also verifies \eqref{eq:estim_neglig_terms}.

If for $\ell \geq 0$
\begin{equation*}
Z^\ell (x,t)= \sum_{m=0}^{\ell} \Xi^{\ell-m,m}(x,t),
\end{equation*}
then
\begin{eqnarray*}
&& 2 \sum_{k=0}^{K_0} \sum_{m =0}^{K_0-1}  \eps^{(k+m+1)\delta - \al /2} \lla \eps^\varpi \Xi^{k,m}(.,t)+  \widetilde w^{m+1}(.,t), v^\eps(.,t)\rra\\
&& \quad = 2 \sum_{\ell=0}^{K_0-1}  \eps^{(\ell+1)\delta - \al /2}  \lla \eps^{\varpi} Z^{\ell}(.,t) + \sum_{m=0}^{\ell} \widetilde w^{m+1}(.,t), v^\eps(.,t)\rra \\
&& \quad + \lla \mathcal{NT}^{3,\eps}(.,t), v^\eps(.,t)\rra.
\end{eqnarray*}
Here we distinguish two cases.
\begin{description}
\item[Case $\al < 2$] Then $\varpi = \delta$ and choosing $\widetilde w^1=0$ yields
\begin{eqnarray*}
&&  \sum_{\ell=0}^{K_0-1}  \eps^{(\ell+1)\delta - \al /2}  \lla \eps^{\varpi} Z^{\ell}(.,t) + \sum_{m=0}^{\ell} \widetilde w^{m+1}(.,t), v^\eps(.,t)\rra \\
&& \quad = \sum_{\ell=0}^{K_0-1}  \eps^{(\ell+2)\delta - \al /2}  \lla  Z^{\ell}(.,t) + \sum_{m=0}^{\ell} \widetilde w^{m+2}(.,t), v^\eps(.,t)\rra .
\end{eqnarray*}
Let us remark that from the definition of the sequences $w^k$ and $\cC_{k,m}$ by \eqref{eq:def_w_k_al<2} and \eqref{eq:cC_k_m}, we have:
\begin{eqnarray*}
Z^0(x,t) & = &  \overline{ \langle Q^0_y\left( .,. \right) \widetilde \Upsilon^0\left(.,.\right)  \rangle \sigma (.) } \ u^0_x(x,t) \\
& = & - \cC_{0,0} u^0_x(x,t) = -\widetilde w^2(x,t).
\end{eqnarray*}

And for $\ell = 2,\ldots, K_0-1$
\begin{eqnarray*}
\widetilde w^{\ell+2} &= &   -  \sum_{m=0}^{\ell} \cC_{\ell,m} u^m_x(x,t) - \sum_{m=2}^{\ell+1}  \widetilde w^{m}\\
&= &   -  \sum_{m=0}^{\ell} \overline{ \langle Q^{\ell}_y\left( .,. \right) \widetilde \Upsilon^{m} \left(.,.\right) \rangle \sigma (.) } u^{m}_x(x,t) - \sum_{m=0}^{\ell-1}   \widetilde w^{m+2}\\
& = &- \sum_{m=0}^{\ell } \overline{ \langle Q^{\ell}_y\left( .,. \right) \widetilde \cG^m \left(.,.,x,t\right)  \rangle \sigma (.) }- \sum_{m=0}^{\ell-1}  \widetilde w^{m+2} = - Z^\ell - \sum_{m=0}^{\ell-1}  \widetilde w^{m+2}.
\end{eqnarray*}
Thereby we obtain immediately that for any $\ell = 0,\ldots,K_0-1$
$$Z^\ell(.,t)+  \sum_{k=0}^{\ell}  \widetilde w^{k+2}(.,t) =0. $$
\item[Case $\al > 2$] Then $\varpi = 0$ and the same arguments lead to
$$Z^\ell(.,t)+  \sum_{k=0}^{\ell}  \widetilde w^{k+1}(.,t) =0. $$
\end{description}
In both cases, the equation \eqref{eq:V_eps_final} can be written as:
\begin{align}\label{eq:V_eps_final_2}
&\quad d\cV^\eps_t = \cB^\eps_t dt + M^\eps_t \sigma(\xi_\frac t{\eps^\alpha})dB_t \\ \nonumber
& \quad +  \eps^{(K_0+1)\delta - 2} \blla (Q^{K_0}\left( \frac{.}{\eps},\xi_{t/\eps^\alpha} \right) a^\eps)_{zz}   v^\eps(.,t), v^\eps(.,t) \brra dt\\ \nonumber
& \quad -   2 \sum_{k=0}^{K_0} \eps^{k\delta}  \blla\left(P^k\left( \frac{.}{\eps}\right) + \eps^\delta Q^k\left( \frac{.}{\eps},\xi_{t/\eps^\alpha} \right) \right) v_x^\eps(.,t) , a^\eps v^\eps_{x}(.,t)   \brra  dt \\ \nonumber
& \quad  +   \lla \mathcal{NT}^{1,\eps}(.,t) + \mathcal{NT}^{3,\eps}(.,t), v^\eps(.,t) \rra  dt - \lla\mathcal{NT}^{2,\eps}(.,t) , v^\eps_{x}(.,t) \rra dt.
\end{align}

\begin{proposition} \label{prop:behaviour_v_eps}
The quantity $v^\eps$ is bounded in $L^2((0,T)\times \Omega;H^1(\R))$, uniformly w.r.t. $\eps$: there exists a constant $C_{H^1}$ independent of $\eps$ such that
\begin{equation} \label{eq:bound_H1}
\mE \int_0^T \|v^\eps(.,t) \|_{H^1(\R)}^2 dt \leq C_{H^1} .
\end{equation}
Moreover the expectation of $L^\infty(0,T;L^2(\R))$ norm of $v^\eps$  is also bounded: there exists a constant $C_{L^\infty}$ again independent of $\eps$ such that
\begin{equation} \label{eq:Linfty_estim_v_eps}
\mE \left[ \sup_{t\in [0,T]} \|v^\eps(.,t) \|_{L^2(\R)}^2  \right] \leq C_{L^\infty} .
\end{equation}
\end{proposition}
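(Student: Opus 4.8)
The plan is to use the identity for $d\cV^\eps_t$ established in Lemma~\ref{lmm:dynamic_V_eps}, as rewritten in \eqref{eq:V_eps_final_2}, as an energy identity. The key observation is that, because the correctors $P^k$ are bounded with $\langle P^k\rangle=1$ and $P^0(z)=\aeff/\bar a(z)>0$, the functional $\cV^\eps_t$ is, up to lower-order corrections in $\eps$, comparable to $\|v^\eps(.,t)\|_{L^2(\R)}^2$: there are constants $0<c_1\le c_2$ and $\eps_0>0$ so that for $\eps<\eps_0$,
\begin{equation*}
c_1\|v^\eps(.,t)\|_{L^2(\R)}^2 - \text{(bounded)} \;\le\; \cV^\eps_t \;\le\; c_2\|v^\eps(.,t)\|_{L^2(\R)}^2 + \text{(bounded)}.
\end{equation*}
Indeed $Q^k$ are bounded in $z$ with linear growth in $y$, so the terms carrying $\eps^\delta Q^k$ and the $\eps^{\varpi}U^{k,\ell}$ terms are $O(\eps^\delta)$ (or smaller) multiples of $\|v^\eps\|_{L^2}^2$ times a polynomial in $|\xi_{t/\eps^\al}|$, hence absorbed for $\eps$ small after taking expectations and using the moment bounds on $\xi$. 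First I would make this coercivity/domination precise.

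Next I would integrate \eqref{eq:V_eps_final_2} from $0$ to $t$, using $\cV^\eps_0=0$ (since $v^\eps(x,0)=0$), and take expectations. The martingale term $M^\eps_t\sigma(\xi_{t/\eps^\al})dB_t$ vanishes in expectation. The crucial gain comes from the fourth line: since $P^k(z/\eps)+\eps^\delta Q^k(z/\eps,\xi)$ is, for $\eps$ small, bounded below by a positive constant (dominated by $P^0\ge \aeff/\lambda$), and $a^\eps\ge\lambda^{-1}$, the term
\begin{equation*}
-2\sum_{k=0}^{K_0}\eps^{k\delta}\blla\Big(P^k+\eps^\delta Q^k\Big)v^\eps_x,\,a^\eps v^\eps_x\brra\,dt
\end{equation*}
is bounded above by $-c\,\|v^\eps_x(.,t)\|_{L^2(\R)}^2\,dt$ for some $c>0$. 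This is the coercive dissipation that will produce the $H^1$ bound. The term $\eps^{(K_0+1)\delta-2}\blla (Q^{K_0}a^\eps)_{zz}v^\eps,v^\eps\brra$ has exponent $(K_0+1)\delta-2\ge0$ by the choice of $K_0$, so it is an $O(\|v^\eps\|_{L^2}^2)$ contribution (times a polynomial in $|\xi|$), and I would handle it by Gr\"onwall. The remaining $\cB^\eps$ and $\mathcal{NT}^{j,\eps}$ terms are either uniformly bounded or $O(\eps^\nu)$ after the moment estimates on $\xi$ and Cauchy--Schwarz (for the $\mathcal{NT}$ terms: $|\lla \mathcal{NT}^{j,\eps},v^\eps\rra|\le \|\mathcal{NT}^{j,\eps}\|_{L^2}\|v^\eps\|_{L^2}\le C\eps^\nu(1+\|v^\eps\|_{L^2}^2)$). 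Putting this together yields
\begin{equation*}
\mE\,\cV^\eps_t + c\int_0^t\mE\|v^\eps_x(.,s)\|_{L^2(\R)}^2\,ds \;\le\; C + C\int_0^t\mE\|v^\eps(.,s)\|_{L^2(\R)}^2\,ds,
\end{equation*}
and combining with $\mE\cV^\eps_t\ge c_1\mE\|v^\eps(.,t)\|_{L^2}^2-C$, Gr\"onwall's lemma gives $\sup_{t\le T}\mE\|v^\eps(.,t)\|_{L^2(\R)}^2\le C$, and then the dissipation integral gives \eqref{eq:bound_H1}.

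For the pathwise $L^\infty(0,T;L^2)$ bound \eqref{eq:Linfty_estim_v_eps}, I would go back to the same It\^o identity but keep the martingale term, take a supremum in $t$, and apply the Burkholder--Davis--Gundy inequality to $\sup_{t\le T}|\int_0^t M^\eps_s\sigma\,dB_s|$. The quadratic variation is controlled by $\int_0^T |M^\eps_s|^2|\sigma|^2\,ds$, and from \eqref{eq:def_martin_term_H_1} one has $|M^\eps_s|\le C\eps^{\delta-\al/2}(1+|\xi_{s/\eps^\al}|)\|v^\eps(.,s)\|_{L^2}^2 + \eps^\varpi|\widetilde M^\eps_s|$; here one uses $\delta-\al/2$ could be negative, so one estimates $\eps^{\delta-\al/2}\le \eps^{-\al/2}$ and absorbs using $\sup_s\|v^\eps(.,s)\|_{L^2}^2$ with a small coefficient after BDG (the standard trick: $\mE\sup_t|\int_0^t M^\eps\sigma dB|\le \tfrac12\mE\sup_t\cV^\eps_t + C\mE\int_0^T|M^\eps_s|^2|\sigma_s|^2 ds$, and the latter integral is bounded by $C$ times $\mE\int_0^T\|v^\eps(.,s)\|_{L^2}^4\cdot(\text{moments of }\xi)\,ds$, which we would need to control separately). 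The main obstacle is precisely this last point: the martingale quadratic variation is \emph{quartic} in $v^\eps$, so the naive argument does not close. The remedy is to first establish \eqref{eq:bound_H1} and $\sup_t\mE\|v^\eps\|_{L^2}^2\le C$ (which need only the expectation, not the sup inside, so no BDG is required), and \emph{then} use those bounds to control $\mE\int_0^T\|v^\eps(.,s)\|_{L^2}^2\cdot\|v^\eps(.,s)\|_{?}^2\,ds$; more carefully, one rewrites $|M^\eps_s|^2 \le C(1+|\xi_{s/\eps^\al}|^2)\,\cV^\eps_s\cdot\|v^\eps(.,s)\|_{L^2}^2$ and uses that $\cV^\eps_s$ itself is what we are bounding, yielding a quadratic-in-$\sup\cV^\eps$ term that is absorbed via Young's inequality once we know $\mE\int_0^T(1+|\xi_{s/\eps^\al}|^{\hat p})\|v^\eps(.,s)\|_{L^2}^2\,ds$ is finite — which follows from \eqref{eq:bound_H1} together with independence-type or moment bounds on $\xi$. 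I would also use stationarity of $\xi$ so that $\mE(1+|\xi_{t/\eps^\al}|^{\hat p})$ is a constant independent of $t$ and $\eps$, which simplifies all the above estimates. Dimension $d=1$ enters only through the explicit formula $P^0(z)=\aeff/\bar a(z)$ and the one-dimensional elliptic computations behind the correctors; the energy argument itself is dimension-agnostic.
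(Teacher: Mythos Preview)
Your treatment of the first bound \eqref{eq:bound_H1} is correct and matches the paper's argument: integrate \eqref{eq:V_eps_final_2} from $0$ to $t$, take expectations (the martingale drops), use that $P^0(z)=\aeff/\bar a(z)$ together with uniform ellipticity makes the $k=0$ term of the dissipation coercive, and absorb the remaining $O(\eps^\delta)$ and $\mathcal{NT}$ corrections via Gr\"onwall.

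For \eqref{eq:Linfty_estim_v_eps} there is a genuine gap. You correctly isolate the dangerous piece of $M^\eps$, namely
\[
\sum_{k=0}^{K_0}\eps^{(k+1)\delta-\al/2}\,\blla Q^k_y\Big(\frac{\cdot}{\eps},\xi_{t/\eps^\al}\Big)\,v^\eps,\,v^\eps\brra,
\]
whose leading exponent $\delta-\al/2$ may be negative and whose quadratic variation is quartic in $v^\eps$. Your proposed workaround (crudely bounding $\eps^{\delta-\al/2}$, then trying to control $\mE\int_0^T(1+|\xi|^{\hat p})\|v^\eps\|_{L^2}^4\,ds$ from the $H^1$ bound and moment estimates on $\xi$) does not close: nothing you have established controls a fourth moment of $v^\eps$ against a polynomial in $\xi$, and stationarity of $\xi$ alone cannot decouple these factors since $v^\eps$ depends on the whole path of~$\xi$.

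The paper's resolution is structural and much simpler. Because the right-hand sides of \eqref{eq:def_P_0_Q_0} and \eqref{eq:def_Q_k} are total $z$-derivatives, one has $\cL\langle Q^k\rangle=0$, so $\langle Q^k\rangle$ is constant in $y$ and hence $\langle Q^k_y\rangle=0$. One may therefore choose a periodic $z$-antiderivative $\widehat Q^k$ with $\widehat Q^k_z=Q^k_y$ and integrate by parts in the fast variable:
\[
\blla Q^k_y\,v^\eps,\,v^\eps\brra \;=\; -\,2\eps\,\blla \widehat Q^k\,v^\eps,\,v^\eps_x\brra.
\]
This gains a full factor of $\eps$, turning the exponent into $(k+1)\delta-\al/2+1\ge \delta-\al/2+1$, which is strictly positive for every $\al\ne 2$ (it equals $3-3\al/2$ if $\al<2$ and $\al/2-1$ if $\al>2$). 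After this step every contribution to $M^\eps$ carries a non-negative power of $\eps$, and the quadratic-in-$v^\eps$ part has become $\eps^\nu\lla \widehat Q^k v^\eps,v^\eps_x\rra$ with $\nu>0$. Now BDG combined with the standard splitting
\[
\mE\Big(\int_0^T \eps^{2\nu}C(\xi_s)\|v^\eps\|_{L^2}^2\|v^\eps_x\|_{L^2}^2\,ds\Big)^{1/2}
\le \tfrac14\,\mE\sup_{s\le T}\|v^\eps(.,s)\|_{L^2}^2
+C\eps^{2\nu}\,\mE\!\int_0^T C(\xi_s)\|v^\eps_x\|_{L^2}^2\,ds
\]
lets you absorb the first piece on the left and control the second via \eqref{eq:bound_H1}. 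Replace your workaround by this integration-by-parts step; it is precisely the missing idea.
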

\begin{proof}
Using \eqref{eq:V_eps_final_2}, we have for any $s \in [0,T]$
\begin{eqnarray*}
&& \cV^\eps_s + 2 \sum_{k=0}^{K_0} \int_0^s \eps^{k\delta} \left[ \lla P^k a^\eps  v_x^\eps, v^\eps_{x} \rra + \eps^\delta \lla Q^k  v_x^\eps,  a^\eps v^\eps_{x} \rra \right] dt \\ \nonumber
&& \quad =\int_0^s \cB^\eps_t dt +  \int_0^s M^\eps_t \sigma(\xi_\frac t{\eps^\alpha})dB_t +  \eps^{(K_0+1)\delta - 2}  \int_0^s  \lla (Q^{K_0} a^\eps)_{zz} v^\eps , v^\eps \rra dt\\ \nonumber
&& \quad +  \int_0^s \lla \mathcal{NT}^{1,\eps}(.,t)+\mathcal{NT}^{3,\eps}(.,t), v^\eps(.,t)\rra dt - \int_0^s  \lla \mathcal{NT}^{2,\eps}(.,t) , v^\eps_{x}(.,t)\rra dt.
\end{eqnarray*}
Remember that $\cV^\eps_0 = 0$ since $ v^\eps(.,0)= 0$. Hence
\begin{eqnarray*}
&&  \lla P^0 v^\eps,v^{\eps} \rra  + 2 \int_0^s  \lla P^0 a^\eps  v_x^\eps, v^\eps_{x} \rra dt  \\
&& \qquad + \sum_{k=1}^{K_0} \eps^{k\delta}\left[ \lla P^k v^\eps, v^{\eps} \rra + 2 \int_0^s  \lla P^k a^\eps  v_x^\eps, v^\eps_{x} \rra dt \right]\\
&& \qquad + \sum_{k=0}^{K_0} \eps^{(k+1)\delta }\left[ \lla Q^k  v^\eps, v^{\eps} \rra +2\int_0^s \lla Q^k  v_x^\eps,  a^\eps v^\eps_{x} \rra dt   \right] \\
&& \qquad +\eps^{\varpi} \sum_{k=0}^{K_0} \sum_{\ell=0}^{K_0}\eps^{(k+\ell+1)\delta+\al/2} \lla U^{k,\ell},v^\eps\rra \\
&& \quad = \int_0^s \cB^\eps_t dt +  \int_0^s M^\eps_t \sigma(\xi_\frac t{\eps^\alpha})dB_t +  \eps^{(K_0+1)\delta - 2}  \int_0^s  \lla (Q^{K_0} a^\eps)_{zz} v^\eps , v^\eps \rra dt\\ \nonumber
&& \qquad +  \int_0^s \lla \mathcal{NT}^{1,\eps}(.,t)+\mathcal{NT}^{3,\eps}(.,t), v^\eps(.,t)\rra dt - \int_0^s  \lla \mathcal{NT}^{2,\eps}(.,t) , v^\eps_{x}(.,t)\rra dt.
\end{eqnarray*}
Taking the expectation of all the terms in the last relation we deduce that there exists a constant $K$ such that for any $t \in [0,T]$
$$\mE \left[   \blla P^0\left( \frac{.}{\eps}\right)  v^\eps(.,t), v^{\eps}(.,t) \brra  + \int_0^t  \blla P^0\left( \frac{.}{\eps}\right) a^\eps(.,s)  v_x^\eps(.,s), v^\eps_{x}(.,s) \brra ds \right] \leq K.$$
Since $a$ is bounded and uniformly elliptic and $P^0(z) = \frac{\aeff}{\bar a (z)}$, this proves that $v^\eps$ is bounded in $L^2((0,T)\times \Omega;H^1(\R))$.

To obtain \eqref{eq:Linfty_estim_v_eps}, note that in the martingale term $M^\eps$ given by \eqref{eq:def_martin_term_H_1}, all powers of $\eps$ are non-negative except for the first sum:
$$\sum_{k=0}^{K_0} \eps^{(k+1)\delta-\al/2}\lla Q^k_y v^\eps ,v^\eps \rra .$$
We define $\widehat Q^k_z = Q^k_y$ (recall that $\langle Q^k_y\rangle = 0$) and integrate by parts:
$$\sum_{k=0}^{K_0} \eps^{(k+1)\delta-\al/2} \lla Q^k_y v^{\eps} , v^{\eps}\rra = - 2\sum_{k=0}^{K_0} \eps^{k\delta+ \delta - \al/2+1}  \lla \widehat Q^k v^\eps, v^{\eps}_x\rra .$$
In any case $ \delta - \al/2+1 > 0$.
The conclusion follows from the Burkholder-Davis-Gundy inequality.
\end{proof}

\subsubsection{Weak convergence of $v^\eps$}

Here we prove that the sequence $v^\eps$ is tight in $$V_T= L^2_w(0,T; H^1(\R))\cap C(0,T;L^2_w(\R)).$$
Remenber that the index $w$ means that the corresponding space is equipped with the weak topology.
For any function $\phi \in C^\infty_0(\R)$ we define
\begin{align}\label{eq:def_cV_eps}
&\qquad \widehat \cV^\eps_t  = \sum_{k=0}^{K_0} \eps^{k\delta} \left[ \blla P^k\left( \frac{.}{\eps}\right)  \phi, v^{\eps}(.,t) \brra + \eps^{\delta}  \blla Q^k\left( \frac{.}{\eps},\xi_{t/\eps^\alpha} \right)  \phi, v^{\eps}(.,t) \brra \right] \\ \nonumber
&+ \sum_{k=0}^{K_0} \eps^{k\delta+1} \left[ \blla \widehat P^k\left( \frac{.}{\eps}\right)  \phi_x, v^{\eps}(.,t) \brra + \eps^{\delta}  \blla \widehat Q^k\left( \frac{.}{\eps},\xi_{t/\eps^\alpha} \right)  \phi_x, v^{\eps}(.,t) \brra \right] \\ \nonumber
&+\eps^{\varpi} \sum_{k=0}^{K_0} \sum_{\ell=0}^{J_0}\eps^{(k+\ell+1)\delta+\al/2} \blla U^{k,\ell}(.,t,\xi_{t/\eps^\al}),\phi\brra
\end{align}
where $P^k$, $Q^k$ and $U^{k,\ell}$ are defined again by \eqref{eq:def_P_k}, \eqref{eq:def_Q_k} and \eqref{eq:def_U_k}.
\begin{lemma} \label{lmm:dynamic_V_eps_weak_conv}
If $\widehat P^k$ and $\widehat Q^k$ are solutions of:
$$(\bar a(z) \widehat P^k )_{zz} = -2 (P^k \bar a)_z, $$
and
$$\cL \widehat Q^k = 2((\bar a - a) P^k)_{z} + ((\bar a -a)\widehat P^k)_{zz}$$
then the  dynamics of $\widehat \cV^\eps$ is described by
\begin{align}\label{eq:Ito_form_calV_eps}
d\widehat \cV^\eps_t &=   \sum_{k=0}^{K_0} \eps^{k\delta}\blla  \left( \eps^\delta Q^k\phi + \eps^{\delta+1} \widehat Q^k \phi_x \right), a^\eps v^\eps_{xx}  \brra dt \\ \nonumber
& + \sum_{k=0}^{K_0} \eps^{k\delta}   \blla (P^k a^\eps) \phi_{xx} + 2  (\widehat P^k a^\eps)_{z} \phi_x+\eps (\widehat P^k a^\eps) \phi_{xxx},  v^\eps \brra dt \\ \nonumber
&+  \lla \mathcal{NT}^{3,\eps} (.,t), \phi \rra dt +\widehat \cB^\eps_t dt + \widehat M^\eps_t \sigma(\xi_\frac t{\eps^\alpha}) dB_t .
\end{align}
where the terms $\mathcal{NT}^{3,\eps}$ and $ \widehat \cB^\eps$ verify \eqref{eq:estim_neglig_terms}. The stochastic integrand is given by:
\begin{align}\label{eq:mart_int_weak_conv}
& \quad \widehat M^\eps_t = \sum_{k=0}^{K_0} \eps^{(k+1)\delta-\al/2} \blla Q^k_y \phi + \eps \widehat Q^k_y \phi_x, v^{\eps}(.,t) \brra \\ \nonumber
&\ + \eps^{\varpi}\sum_{k=0}^{K_0}\sum_{m=0}^{K_0}  \eps^{(k+m)\delta} \blla P^k \phi + \eps^\delta Q^k\phi + \eps  \widehat P^k  \phi_x + \eps^{\delta+1}\widehat Q^k \phi_x,  \widetilde \cG^m  \brra  \\ \nonumber
& \ + \eps^{\varpi} \sum_{k=0}^{K_0} \sum_{\ell=0}^{K_0}\eps^{(k+\ell+1)\delta} \lla U_y^{k,\ell}(.,t,\xi_{t/\eps^\al}),\phi\rra .
\end{align}
\end{lemma}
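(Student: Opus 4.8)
The plan is to apply the It\^o formula to the functional $\widehat\cV^\eps_t$ of \eqref{eq:def_cV_eps} and then to use the equations defining all the correctors in order to kill, power by power, every term carrying a negative power of $\eps$. There are three sources of time dependence in $\widehat\cV^\eps_t$: the field $v^\eps(\cdot,t)$, which solves \eqref{eq:dynamic_theta}; the fast diffusion $\xi_{t/\eps^\al}$, which as a process in $t$ has generator $\eps^{-\al}\cL$ and a stochastic differential with martingale part $\eps^{-\al/2}\sigma(\xi_{t/\eps^\al})\,dB_t$; and the explicit $(t,y)$-dependence of $U^{k,\ell}(\cdot,t,\xi_{t/\eps^\al})$. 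Writing out the differential of each summand of \eqref{eq:def_cV_eps} thus produces: (i) drift contributions from the $a^\eps v^\eps_{xx}\,dt$ and $-\sum_k\eps^{k\delta-\al/2}\widetilde w^k\,dt$ parts of $dv^\eps$; (ii) drift contributions $\eps^{-\al}\cL$ applied to the $\xi$-dependent correctors $Q^k,\widehat Q^k,U^{k,\ell}$; (iii) stochastic integrals against $dB_t$, coming both from the martingale part of $dv^\eps$ (which, since it contains $\widetilde\Upsilon^m(x/\eps,\xi_{t/\eps^\al})u^m_x$, is correlated with $\xi$) and from $\eps^{-\al/2}$ times the $y$-gradients of $Q^k,\widehat Q^k,U^{k,\ell}$; (iv) the quadratic-covariation cross terms between these two families of martingales; and (v) the $\partial_t U^{k,\ell}$ term.

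The heart of the computation is the integration by parts moving the two spatial derivatives in $\lla P^k(\cdot/\eps)\phi,\,a^\eps v^\eps_{xx}\rra$ (and likewise in the $\widehat P^k,Q^k,\widehat Q^k$ summands) onto the rapidly oscillating functions $P^k(x/\eps)\,a(x/\eps,\xi_{t/\eps^\al})\,\phi(x)$. The chain rule on $P^k(x/\eps)$ and on $a^\eps$ generates a factor $\eps^{-2}$ in front of $(P^k a)_{zz}(x/\eps,\xi)\,\phi$, a factor $2\eps^{-1}$ in front of $(P^k a)_z(x/\eps,\xi)\,\phi_x$, and an $O(1)$ remainder with $\phi_{xx}$; the $\widehat P^k$ summand produces analogous $\eps^{-1}$ and $O(1)$ pieces paired with $\phi_x,\phi_{xx},\phi_{xxx}$. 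These are then matched against the $\eps^{-\al}\cL$ and $\eps^{-\al/2}$ contributions of the fast process, using $\delta=\al-2$ for $\al>2$ (and the parallel bookkeeping for $\al<2$, where $\varpi=\delta$). Splitting $(P^k a^\eps)_{zz}=(P^k\bar a)_{zz}+(P^k(a^\eps-\bar a))_{zz}$, the space-averaged part is removed by the $P^k$-equations \eqref{eq:def_P_0_Q_0} and \eqref{eq:def_P_k} up to the recursion term $\overline{(Q^{k-1}a)_{zz}}$, while the oscillating remainder $(P^k(a^\eps-\bar a))_{zz}$ together with $-(Q^{k-1}a^\eps)_{zz}$ equals $-\cL Q^k$ by \eqref{eq:def_Q_k}; hence the $\eps^{-2}$ contribution of the $P^k$-summand cancels the $\eps^{-\al}\cL Q^k$ contribution of the $Q^k$-summand, the recursion in $k$ telescoping from $k=0$ up to $k=K_0$. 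The $\eps^{-1}$ pieces (those paired with $\phi_x$) are disposed of in exactly the same way by the hypotheses $(\bar a\widehat P^k)_{zz}=-2(P^k\bar a)_z$ and $\cL\widehat Q^k=2((\bar a-a)P^k)_z+((\bar a-a)\widehat P^k)_{zz}$. Finally, the cross-covariation drift, of the form $\sim\eps^{(k+1)\delta-\al/2+\varpi}\sum_m\eps^{m\delta}\langle Q^k_y\widetilde\cG^m\rangle\sigma$, is handled by the definition \eqref{eq:def_U_k} of $U^{k,\ell}$: combined with $\partial_t U^{k,\ell}$ and $\eps^{-\al}\cL U^{k,\ell}$ it reduces to the fully averaged quantity $\Xi^{k,\ell}$ of \eqref{eq:def_Xi_k_m}, which is collected into the negligible term $\mathcal{NT}^{3,\eps}$ of \eqref{eq:def_neglig_3}.

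After these cancellations the only surviving drift contributions are those displayed in \eqref{eq:Ito_form_calV_eps}: the $a^\eps v^\eps_{xx}$-drift hitting the $\eps^\delta Q^k\phi$ and $\eps^{\delta+1}\widehat Q^k\phi_x$ parts of $\widehat\cV^\eps$ (all carrying the harmless factor $\eps^\delta$); the $O(1)$ drift in which the spatial derivatives have landed on $\phi$, yielding the $\lla(P^k a^\eps)\phi_{xx}+2(\widehat P^k a^\eps)_z\phi_x+\eps(\widehat P^k a^\eps)\phi_{xxx},v^\eps\rra$ terms; the term $\lla\mathcal{NT}^{3,\eps},\phi\rra$; and two pools $\widehat\cB^\eps$ and the rest of $\mathcal{NT}^{3,\eps}$ into which one dumps every contribution carrying a strictly positive power of $\eps$ — in particular the truncation leftover $\eps^{(K_0+1)\delta-2}(Q^{K_0}a^\eps)_{zz}$, whose power is non-negative by the choice of $K_0$ (and strictly positive if one takes $K_0$ with $(K_0+1)\delta>\max(2,\al/2)$), together with the $\partial_t u^k$- and $\partial_t\widetilde w^k$-type terms. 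The integrand $\widehat M^\eps$ of \eqref{eq:mart_int_weak_conv} is simply the sum of all the $dB_t$-integrals left over after the drift cancellations. The bounds \eqref{eq:estim_neglig_terms} on the negligible pools follow from the uniform $C^N$-estimates on $P^k,\widehat P^k$, the polynomial-growth estimates of type \eqref{eq:est_kappa_1} on $Q^k,\widehat Q^k,U^{k,\ell}$ in $y$, the decay estimates \eqref{eq:est_uzero} for $u^0$ and the $u^k$, and the moment bounds on the stationary process $\xi_{t/\eps^\al}$ coming from the rapidly decaying invariant density $p$.

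The main obstacle is entirely the bookkeeping of the powers of $\eps$: one must check that the recursion \eqref{eq:def_P_k}, \eqref{eq:def_Q_k} telescopes cleanly in $k$, that every power produced by the repeated chain rule or by the $\eps^{-\al},\eps^{-\al/2}$ factors of the fast process is either cancelled exactly or bounded below by $\eps^\delta$ (resp.\ is strictly positive), and that the two regimes $\al<2$ ($\varpi=\delta$) and $\al>2$ ($\varpi=0$) are handled at once — the powers reorganize differently, but the correctors $P^k,Q^k,\widehat P^k,\widehat Q^k,U^{k,\ell}$ and the constants $\cC_{k,m}$ have been defined precisely so that the cancellations go through in both. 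The computation is long but mechanical, and is carried out in the appendix.
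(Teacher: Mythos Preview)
Your proposal is correct and follows essentially the same route as the paper: apply It\^o's formula to $\widehat\cV^\eps$, integrate by parts on the $P^k\phi$ and $\widehat P^k\phi_x$ summands to expose the $\eps^{-2}$ and $\eps^{-1}$ coefficients, cancel them via the defining equations for $P^k,Q^k,\widehat P^k,\widehat Q^k$, and absorb the cross-covariation pieces through the definition of $U^{k,\ell}$ and the choice of $w^k$, reducing everything to $\mathcal{NT}^{3,\eps}$ and $\widehat\cB^\eps$. One small slip: the ``truncation leftover $\eps^{(K_0+1)\delta-2}(Q^{K_0}a^\eps)_{zz}$'' you mention belongs to Lemma~\ref{lmm:dynamic_V_eps} (the quadratic functional $\cV^\eps$), not here --- in $\widehat\cV^\eps$ the $Q^k\phi$ terms are paired against $a^\eps v^\eps_{xx}$ without integration by parts and simply survive with the harmless $\eps^\delta$ prefactor, exactly as you state earlier, so no telescoping remainder appears.
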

\begin{proof}
The arguments are very similar to those in  the proof of the lemma \ref{lmm:dynamic_V_eps} and are again based on the It\^o formula. 
\end{proof}

Now we get a tightness result.
\begin{proposition}
There exist two constants $\nu > 0$ and $C > 0$ such that for any $\eps$ and any $0\leq t \leq \tau \leq T$,
\begin{equation} \label{eq:cont_estim_unif_eps}
\mE\left[ \sup_{t \leq s \leq \tau} |\widehat \cV^\eps_s - \widehat\cV^\eps_t| \right] \leq C \sqrt{ |\tau-t| } + C\eps^{\nu} .
\end{equation}
\end{proposition}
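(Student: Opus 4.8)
The plan is to integrate the Itô formula \eqref{eq:Ito_form_calV_eps} over $[t,s]$, take the supremum over $s\in[t,\tau]$ and then the expectation, and to estimate the drift terms and the stochastic integral separately. Since $\widehat\cV^\eps_t$ differs from $\blla P^0(\cdot/\eps)\phi,v^\eps(\cdot,t)\brra$ only by terms carrying a strictly positive power of $\eps$ and bounded correctors (of polynomial growth in $\xi$, by Section~\ref{ssect:aux_pb}), it suffices to control the increments of $\widehat\cV^\eps$ itself. First I would dispose of the ``explicit'' drift contributions. Line~2 of \eqref{eq:Ito_form_calV_eps} has only bounded coefficients — note that $(\widehat P^ka^\eps)_z$ is a $z$-derivative evaluated at $x/\eps$, so it produces no factor $\eps^{-1}$ — hence $\big|\blla\cdot\,,v^\eps(\cdot,r)\brra\big|\leq C\|v^\eps(\cdot,r)\|_{L^2}$, and after integrating in $r$ and applying Cauchy--Schwarz in time, $\int_t^s$ of it is $\leq C|\tau-t|^{1/2}\big(\int_0^T\|v^\eps\|_{L^2}^2\,dr\big)^{1/2}$, which is $\leq C\sqrt{|\tau-t|}$ in mean by \eqref{eq:Linfty_estim_v_eps}. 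The terms $\lla\mathcal{NT}^{3,\eps},\phi\rra$ and $\widehat\cB^\eps$ satisfy \eqref{eq:estim_neglig_terms}, so integrating in $r$ and in $x$ (using the spatial decay there) and invoking the uniform moment bounds for $\xi$ gives a contribution $\leq C\eps^{\nu}$.

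For the stochastic integral, the Burkholder--Davis--Gundy inequality gives
\[
\mE\sup_{t\leq s\leq\tau}\Big|\int_t^s\widehat M^\eps_r\,\sigma(\xi_{r/\eps^\al})\,dB_r\Big|\leq C\,\mE\Big(\int_t^\tau|\widehat M^\eps_r|^2\,|\sigma(\xi_{r/\eps^\al})|^2\,dr\Big)^{1/2},
\]
and I would split $\widehat M^\eps$ along \eqref{eq:mart_int_weak_conv}. The part that does not involve $v^\eps$ (the $\eps^\varpi$-sums with $\widetilde\cG^m$ and $U^{k,\ell}_y$) carries, for $\al<2$, the prefactor $\eps^\varpi=\eps^\delta$ and is negligible; for $\al>2$ its leading term $\blla P^0(\cdot/\eps)\phi,\widetilde\cG^0\brra$ is of order $\eps^0$ but bounded in $\bL^2(\Omega)$ uniformly in $\eps$ and $r$ (using boundedness of $\sigma$, the polynomial bound \eqref{eq:est_kappa_1} on $\widetilde\Upsilon^0$ and the fast decay of the invariant density), hence it contributes $C\sqrt{|\tau-t|}$. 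For the part involving $v^\eps$, $\sum_k\eps^{(k+1)\delta-\al/2}\blla Q^k_y\phi+\eps\widehat Q^k_y\phi_x,v^\eps\brra$, I would integrate by parts once in $x$ using $\langle Q^k_y\rangle=0$: this replaces $Q^k_y(\cdot/\eps,\xi)$ by $\eps$ times a bounded, polynomially $\xi$-dependent function, so the $\eps$-exponent becomes at least $\delta-\al/2+1>0$ (the strict inequality already used in the proof of Proposition~\ref{prop:behaviour_v_eps}); combined with \eqref{eq:bound_H1}--\eqref{eq:Linfty_estim_v_eps} and a Cauchy--Schwarz in $\xi$ this is $\leq C\eps^{\nu}$. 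Thus the stochastic term is $\leq C\sqrt{|\tau-t|}+C\eps^{\nu}$.

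The remaining and most delicate term is line~1 of \eqref{eq:Ito_form_calV_eps}, $\sum_{k=0}^{K_0}\eps^{k\delta}\blla\eps^\delta Q^k\phi+\eps^{\delta+1}\widehat Q^k\phi_x,a^\eps v^\eps_{xx}\brra$: integrating by parts twice in $x$ to move the derivatives off $v^\eps_{xx}$ produces factors $\eps^{-1}$ and $\eps^{-2}$ which a priori overwhelm the $\eps^{(k+1)\delta}$ in front when $\delta$ is small, and this is exactly the effect the correctors $P^k,Q^k,\widehat P^k,\widehat Q^k$ were built to absorb. I expect this to be the main obstacle. The plan here is to run the same cascade used to pass from \eqref{eq:V_eps_final} to \eqref{eq:V_eps_final_2} for the weighted quantity $\cV^\eps$, now against the fixed test function $\phi$: using the identities behind \eqref{eq:def_P_k}--\eqref{eq:def_Q_k} (e.g.\ $(Q^{k-1}a)_{zz}=-\cL Q^k-(aP^k)_{zz}$) and the analogous ones for $\widehat P^k,\widehat Q^k$, each dangerous term is either absorbed into the $\cL$-part of the Itô differential of the next corrector already present in \eqref{eq:def_cV_eps}, or passed to the index $k+1$ with its $\eps$-exponent raised by $\delta$. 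After at most $K_0+1$ steps the cascade terminates at $k=K_0$ in a boundary remainder whose $\eps$-exponent is $\geq(K_0+1)\delta-2\geq0$ (precisely the defining property $(K_0+1)\delta\geq\max(2,\al/2)$ of $K_0$), multiplying an inner product of a bounded, polynomially $\xi$-dependent function with $v^\eps$ or $v^\eps_x$. Bounding that remainder by $C\eps^{(K_0+1)\delta-2}(1+|\xi_{r/\eps^\al}|^{\hat p})(\|v^\eps\|_{L^2}+\|v^\eps_x\|_{L^2})$, integrating over $[t,s]$, and applying Cauchy--Schwarz together with \eqref{eq:bound_H1}--\eqref{eq:Linfty_estim_v_eps} (if needed in the weighted form $\mE\int_0^T(1+|\xi_{r/\eps^\al}|^N)\|v^\eps\|_{H^1}^2\,dr\leq C_N$, obtained by running the argument of Proposition~\ref{prop:behaviour_v_eps} for a $\xi$-weighted functional) and the uniform moment bounds on $\xi_{r/\eps^\al}$, yields a bound $C\sqrt{|\tau-t|}$ from the inner products plus $C\eps^{\nu}$ from the strictly positive powers accumulated along the cascade. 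Collecting the estimates and taking $\nu>0$ to be the minimum of the finitely many strictly positive $\eps$-exponents that appeared then gives \eqref{eq:cont_estim_unif_eps}.
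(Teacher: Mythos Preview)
Your treatment of line~2 of \eqref{eq:Ito_form_calV_eps}, of $\mathcal{NT}^{3,\eps}$, of $\widehat\cB^\eps$, and of the stochastic integral (BDG, then integrate by parts using $\langle Q^k_y\rangle=0$ to gain a factor $\eps$, so that the exponent becomes $\delta-\al/2+1>0$) is precisely what the paper does; on these parts your argument and the paper's coincide. The paper, in fact, never treats line~1 at all: after identifying the order-$\eps^0$ drift as $\lla(P^0a^\eps)\phi_{xx},v^\eps\rra+2\lla(\widehat P^0a^\eps)_z\phi_x,v^\eps\rra$, it writes \eqref{eq:dyn_cV} directly and tacitly sweeps the $\eps^{(k+1)\delta}\lla Q^k\phi,a^\eps v^\eps_{xx}\rra$ terms into the remainder without justification. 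So your flagging of line~1 as the delicate point is a genuine observation, not an oversight.

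That said, your proposed cascade does not close with the correctors already present in \eqref{eq:def_cV_eps}. If you integrate by parts twice on $\eps^{(k+1)\delta}\lla Q^k\phi,a^\eps v^\eps_{xx}\rra$, the worst piece $\eps^{(k+1)\delta-2}\lla(Q^ka)_{zz}\phi,v^\eps\rra$ does telescope against the residual $-(Q^ka)_{zz}$ left over from $(P^{k+1}a)_{zz}+\cL Q^{k+1}$, exactly as you say. But the same IBP also produces $2\eps^{(k+1)\delta-1}\lla(Q^ka)_z\phi_x,v^\eps\rra$, and the IBP on $\widehat Q^k\phi_x$ adds $\eps^{(k+1)\delta-1}\lla(\widehat Q^ka)_{zz}\phi_x,v^\eps\rra$. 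At power $\eps^{(k+1)\delta-1}$ the only identity available from the paper's definitions is $2(P^{k+1}a)_z+(\widehat P^{k+1}a)_{zz}+\cL\widehat Q^{k+1}=0$, which absorbs the $P^{k+1}$-contribution only; the new pieces $2(Q^ka)_z+(\widehat Q^ka)_{zz}$ are \emph{not} cancelled by anything already in $\widehat\cV^\eps$, and for small $\delta$ their exponent $(k+1)\delta-1$ is negative. To make your scheme go through you would have to redefine $\widehat P^k,\widehat Q^k$ recursively so that their equations also carry these $Q^{k-1},\widehat Q^{k-1}$ terms, or add a further layer of correctors to $\widehat\cV^\eps$. (The analogy with the passage \eqref{eq:V_eps_final}$\to$\eqref{eq:V_eps_final_2} is misleading: that cancellation concerned the deterministic sources $\widetilde w^m,\Xi^{k,m}$, not the $v^\eps_{xx}$-coupling.) In short: your diagnosis of the obstacle is sharper than the paper's, but the fix as written is incomplete.
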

\begin{proof}
Indeed the absolutely continuous terms of order $\eps^0$ in \eqref{eq:Ito_form_calV_eps} are
$$\lla (P^0 a^\eps) \phi_{xx},  v^\eps \rra dt +  2 \lla (\widehat P^0 a^\eps)_{z} \phi_{xx},  v^\eps \rra dt.$$
And from \eqref{eq:mart_int_weak_conv},
\begin{eqnarray*}
\widehat M^\eps_t & = &\sum_{k=0}^{K_0} \eps^{(k+1)\delta-\al/2}   \lla  Q^k_y \phi , v^{\eps}(.,t) \rra \\
&+& \eps\sum_{k=0}^{K_0} \eps^{(k+1)\delta-\al/2} \blla  \widehat Q^k_y \phi_x, v^{\eps}(.,t) \brra \\ \nonumber
&+&\eps^{\varpi}\sum_{k=0}^{K_0}\sum_{m=0}^{K_0}  \eps^{(k+m)\delta} \blla P^k \phi + \eps^\delta Q^k\phi + \eps  \widehat P^k  \phi_x + \eps^{\delta+1}\widehat Q^k \phi_x,  \widetilde \cG^m  \brra  \\ \nonumber
& + &\eps^{\varpi} \sum_{k=0}^{K_0} \sum_{\ell=0}^{K_0}\eps^{(k+\ell+1)\delta} \blla U_y^{k,\ell}(.,t,\xi_{t/\eps^\al}),\phi\brra .
\end{eqnarray*}
The last three sums are multiplied by a positive power of $\eps$, since $\delta -\al/2 +1 >0$. Note that later, for $\al > 2$, we  have to keep the first term $\lla P^0 \phi ,  \widetilde \Upsilon^0 u^0_x  \rra$.
For the first sum, define $\mathfrak Q^k_z = Q^k_y$ (recall that $\langle Q^k_y\rangle = 0$) and make an integration by parts:
$$ \lla Q^k_y \phi , v^{\eps}\rra = - \eps \lla \mathfrak Q^k , (\phi v^{\eps})_x\rra .$$
Then
\begin{eqnarray*}
\widehat M^\eps_t & = &- \sum_{k=0}^{K_0} \eps^{(k+1)\delta+1-\al/2}   \lla  \mathfrak Q^k,( \phi  v^{\eps}(.,t))_x \rra \\
&+& \sum_{k=0}^{K_0} \eps^{(k+1)\delta+1 -\al/2} \blla  \widehat Q^k_y \phi_x, v^{\eps}(.,t) \brra \\ \nonumber
& +& \eps^{\varpi} \blla P^0 \phi ,\widetilde \Upsilon^0 u^0_x \brra +\eps^{\varpi+\delta }\sum_{0\leq k,m\leq K_0; \ k+m\geq 1}  \eps^{(k+m-1)\delta} \blla P^k \phi ,  \widetilde \cG^m  \brra \\ \nonumber
&+&\eps^{\varpi}\sum_{k=0}^{K_0}\sum_{m=0}^{K_0}  \eps^{(k+m)\delta} \blla  \eps^\delta Q^k\phi + \eps  \widehat P^k  \phi_x + \eps^{\delta+1}\widehat Q^k \phi_x,  \widetilde \cG^m  \brra  \\ \nonumber
& + &\eps^{\varpi} \sum_{k=0}^{K_0} \sum_{\ell=0}^{K_0}\eps^{(k+\ell+1)\delta} \lla U_y^{k,\ell}(.,t,\xi_{t/\eps^\al}),\phi\rra  \\
& =& \eps^{\nu}  \widetilde M^\eps_t +\eps^{\varpi} \blla P^0 \phi ,\widetilde \Upsilon^0 u^0_x \brra
\end{eqnarray*}
where $\nu = \min(\delta+1-\al/2,\varpi + \delta,\varpi+1) > 0$. In other words for any $0\leq t \leq s \leq T$:
\begin{align} \label{eq:dyn_cV}
&\qquad  \widehat \cV^\eps_s - \widehat \cV^\eps_t = \int_t^s \left[ \lla (P^0 a^\eps) \phi_{xx},  v^\eps \rra + 2 \lla (\widehat P^0 a^\eps)_{z} \phi_{xx},  v^\eps \rra \right] dr \\ \nonumber
 &\quad +\eps^{\nu}  \int_t^s \widetilde M^\eps_r \sigma(\xi_\frac t{\eps^\alpha}) dB_r +  \int_t^s \widehat \cB^\eps_r dr + \eps^\varpi \int_t^s  \blla P^0 \phi ,\widetilde \Upsilon^0 u^0_x \brra \sigma(\xi_\frac t{\eps^\alpha})dB_r.
\end{align}
Therefore,
\begin{eqnarray*}
\mE\left[ \sup_{t \leq s \leq \tau} |\cV^\eps_s - \cV^\eps_t| \right] & \leq &C  \| v^\eps\|_{L^2((0,T)\times \Omega;L^2(\R))} \times\sqrt{ |\tau-t| } \\
&+& \mE\left[ \sup_{t \leq s \leq \tau }\left| \int_t^s \widehat \cB^\eps_u du \right| \right]\\
& + &\eps^{\nu} \mE\left[ \sup_{t \leq s \leq \tau}\left|  \int_t^s \widetilde M^\eps_u \sigma(\xi_\frac t{\eps^\alpha})dB_u  \right| \right] \\
& + & \eps^\varpi  \mE\left[ \sup_{t \leq s \leq \tau}\left| \int_t^s  \blla P^0 \phi ,\widetilde \Upsilon^0 u^0_x \brra\sigma(\xi_\frac t{\eps^\alpha}) dB_u  \right| \right] .
\end{eqnarray*}
Note that for $\al > 2$, $\varpi = 0$. From BDG inequality
\begin{eqnarray*}
\mE\left[ \sup_{t \leq s \leq \tau}\left| \int_t^s  \blla P^0 \phi,\widetilde \Upsilon^0 u^0_x \brra \sigma(\xi_\frac t{\eps^\alpha}) dB_u \right| \right] & \leq & C \mE\left[ \left( \int_t^\tau \blla P^0 \phi,\widetilde \Upsilon^0 u^0_x \brra^2 du\right)^{1/2} \right]\\
& \leq & C \sqrt{|\tau-t|}
\end{eqnarray*}
From BDG and Young's inequalities we have
\begin{eqnarray*}
&& \mE\left[ \sup_{t \leq s \leq \tau}\left| \int_t^s  \widetilde M^\eps_u  \sigma(\xi_\frac t{\eps^\alpha})  dB_u \right| \right] \\
&& \qquad  \leq C \mE \left[ \left( \int_t^\tau \left(\widetilde M^\eps_u \right)^2  du\right)^{1/2} \right]\\
&& \qquad \leq C \mE \left[  \sup_{t\leq u\leq \tau}\left( 1 +|\xi_{u/\eps^\al}|^p \right)\right] + C\mE \left[ \int_t^\tau \| v^{\eps}\|^2_{H^1(\R)} du \right]
\end{eqnarray*}
for some $p\geq 1$. We know that for any $\beta > 0$
$$\lim_{\eps \to 0} \eps^{\beta} \mE \left[  \sup_{t\leq u\leq \tau}\left( 1 +|\xi_{u/\eps^\al}|^p \right)\right] = 0$$
(see Proposition 2.6 in \cite{CKP_2001}). Thereby since $\widehat \cB^\eps$ satisfies \eqref{eq:estim_neglig_terms}, we deduce the estimate \eqref{eq:cont_estim_unif_eps}.
\end{proof}

Therefore from \eqref{eq:bound_H1}, \eqref{eq:Linfty_estim_v_eps} and \eqref{eq:cont_estim_unif_eps}, together with Theorem 8.3 in \cite{bill:68} and Prokhorov criterium, the sequence $v^\eps$ is tight in $V_T$. Now we identify its limit as the law of the solution of a SPDE. Here we distinguish the two cases $\al < 2$ and $\al > 2$.
\begin{proposition} \label{prop:limit_ident_al<2}
For $\al < 2$, the sequence $v^\eps$ weakly converges in $V_T$ to zero.
\end{proposition}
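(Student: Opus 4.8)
The plan is to show that \emph{every} subsequential limit in law of the tight family $v^\eps$ is the zero element of $V_T$; since all limit points then coincide, the whole sequence converges weakly in $V_T$ to $0$, the convergence being in fact in probability because the limit is deterministic. To this end I fix $\phi\in C_0^\infty(\R)$ and work with the functional $\widehat\cV^\eps$ of \eqref{eq:def_cV_eps}. By \eqref{eq:bound_H1} and \eqref{eq:Linfty_estim_v_eps} the family $v^\eps$ is tight in $V_T$, and by \eqref{eq:cont_estim_unif_eps} together with the boundedness of $\widehat\cV^\eps_0$ (note $v^\eps(\cdot,0)=0$, so only terms carrying a positive power of $\eps$ survive in $\widehat\cV^\eps_0$) the family $\widehat\cV^\eps$ is tight in $C([0,T])$; hence $(v^\eps,\widehat\cV^\eps)$ is tight in $V_T\times C([0,T])$. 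I would extract a subsequence converging in law to a pair $(v,\mathcal W)$ and, invoking the Skorokhod representation theorem, realise this convergence almost surely on a new probability space, so that $\lla g,v^\eps(\cdot,t)\rra\to\lla g,v(\cdot,t)\rra$ uniformly in $t$ for every fixed $g\in L^2(\R)$, the a priori bounds on $v^\eps$ in $L^2((0,T)\times\Omega;H^1(\R))$ and $L^\infty(0,T;L^2(\R))$ being preserved.

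First I would identify $\mathcal W$. The leading term of $\widehat\cV^\eps_t$ is $\lla P^0(\cdot/\eps)\phi,v^\eps(\cdot,t)\rra$; writing $P^0-1=\partial_z\widetilde P^0$ with $\widetilde P^0$ bounded periodic and integrating by parts in $x$ turns $\lla(P^0(\cdot/\eps)-1)\phi,v^\eps(\cdot,t)\rra$ into $-\eps\lla\widetilde P^0(\cdot/\eps)\phi,v^\eps_x\rra-\eps\lla\widetilde P^0(\cdot/\eps)\phi_x,v^\eps\rra$, which tends to $0$ in $L^2((0,T)\times\Omega)$ by \eqref{eq:bound_H1}; since $\langle P^0\rangle=1$, this term converges to $\lla\phi,v(\cdot,t)\rra$. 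Every other term of \eqref{eq:def_cV_eps} carries a strictly positive power of $\eps$, pairs a quantity bounded in $L^\infty(0,T;L^2)$ or $L^2((0,T)\times\R)$ against $v^\eps$, and involves $\xi_{t/\eps^\al}$ only with polynomial growth; those vanish by \eqref{eq:bound_H1}, \eqref{eq:Linfty_estim_v_eps} and the bound $\eps^{\beta}\mE\sup_{t\le T}(1+|\xi_{t/\eps^\al}|^p)\to 0$ for any $\beta>0$ (see \cite[Proposition 2.6]{CKP_2001}). Hence $\mathcal W_t=\lla\phi,v(\cdot,t)\rra$ for all $t$, both sides being continuous in $t$.

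Next I would pass to the limit in the dynamics \eqref{eq:dyn_cV}. Since $\al<2$ one has $\varpi=\delta>0$ and $\nu>0$, so the two stochastic integrals on the right of \eqref{eq:dyn_cV} tend to $0$ (by the Burkholder--Davis--Gundy inequality, \eqref{eq:bound_H1} and the moment bound just quoted), and the $\widehat\cB^\eps$ and $\mathcal{NT}^{3,\eps}$ terms vanish by \eqref{eq:estim_neglig_terms}; what remains is the order-$\eps^0$ drift $\int_t^s[\lla(P^0 a^\eps)\phi_{xx},v^\eps\rra+2\lla(\widehat P^0 a^\eps)_z\phi_{xx},v^\eps\rra]\,dr$, which must be homogenised, and I would treat its two scales of oscillation separately. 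For the fast $x$-oscillation: the part of $P^0(z)a(z,y)+2(\widehat P^0 a)_z(z,y)$ with zero mean in $z$ is a $z$-derivative of a bounded periodic function, so the same $\eps\,\partial_x$ integration by parts as above makes its contribution $O(\eps)$ times quantities bounded by \eqref{eq:bound_H1}; only $\langle P^0 a\rangle(\xi_{r/\eps^\al})\,\phi_{xx}(x)$ survives, and because \eqref{eq:dynamic_theta} is in non-divergence form one has $\overline{\langle P^0 a\rangle}=\aeff$ (only the positivity of this constant will be used below). For the fast time oscillation it remains to show that $\int_t^s g(\xi_{r/\eps^\al})\lla\phi_{xx},v^\eps(\cdot,r)\rra\,dr\to 0$, where $g=\langle P^0 a\rangle-\aeff$ has zero mean with respect to the invariant measure; by assumption \ref{a6} and \cite{pard:vere:01} the Poisson equation $\cL G=g$ has a solution of at most polynomial growth, applying It\^o's formula to $G(\xi_{r/\eps^\al})$ yields $g(\xi_{r/\eps^\al})\,dr=\eps^{\al}\,d[G(\xi_{r/\eps^\al})]-\eps^{\al/2}(G_y\sigma)(\xi_{r/\eps^\al})\,dB_r$, and substituting this, integrating by parts in $r$ and inserting the dynamics \eqref{eq:dynamic_theta} of $v^\eps$, all resulting terms carry a positive power of $\eps$ and are controlled exactly as in the energy estimate of Proposition \ref{prop:behaviour_v_eps} and in \cite[Section 5]{KPP_2015}. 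I expect this fast-time averaging to be the main obstacle: $v^\eps$ and $\xi$ are not independent, the corrector $G$ has only polynomial growth, and the integration by parts reintroduces a power $\eps^{\al-1}$ of $\eps$, harmless for $\al>1$ but not for $\al\le 1$, the latter range being handled separately and with a stronger conclusion in Part \ref{ssect:conv_al<1} (cf. Remark \ref{rem:al<1_strong_convergence}).

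Finally, combining the previous steps and using $\widehat\cV^\eps_0\to 0$, the limit satisfies, for every $\phi\in C_0^\infty(\R)$ and $0\le s\le T$,
\[
\lla\phi,v(\cdot,s)\rra=\aeff\int_0^s\lla\phi_{xx},v(\cdot,r)\rra\,dr,
\]
i.e.\ $v\in V_T$ is a weak solution of $\partial_t v=\aeff v_{xx}$ with $v(\cdot,0)=0$; by uniqueness $v\equiv 0$. As every subsequential limit in law of $v^\eps$ is the deterministic zero process, $v^\eps$ converges weakly in $V_T$ to $0$.
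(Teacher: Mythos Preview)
Your argument is essentially correct and parallels the paper's: both decompose $\widehat\cV^\eps_t$ into $\lla\phi,v^\eps(\cdot,t)\rra$ plus $o(1)$ via the $P^0-1=\partial_z\widetilde P^0$ trick, both homogenise the order-$\eps^0$ drift by first killing the $z$-oscillation and then the time oscillation, and both conclude that any limit $v$ solves $\partial_t v=\aeff v_{xx}$ with zero initial data, hence $v\equiv 0$.

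Two methodological points are worth noting. First, the paper frames the limit identification as a martingale problem rather than via Skorokhod: it shows that $F_\phi(t,v^\eps)=\lla\phi,v^\eps(\cdot,t)\rra-\int_0^t\aeff\lla\phi_{xx},v^\eps\rra\,dr$ is asymptotically a martingale and then computes its quadratic variation, which here vanishes because $\varpi>0$. This sidesteps the Skorokhod representation step and keeps everything on the original probability space. Second, and more substantively, for the time-ergodic term $\int_0^t g(\xi_{r/\eps^\al})\lla\phi_{xx},v^\eps\rra\,dr$ with $g=\langle P^0 a\rangle-\overline{\langle P^0 a\rangle}$, the paper does not go through the Poisson equation and It\^o integration by parts in $r$; it simply invokes the mixing property of $\xi$ together with the uniform bound \eqref{eq:Linfty_estim_v_eps} and dominated convergence. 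This avoids entirely the $\eps^{\al-1}$ factor you flag as problematic for $\al\le 1$: the mixing argument gives the convergence for the full range $\al<2$ in one stroke, so no appeal to Part~\ref{ssect:conv_al<1} is needed for the present proposition. Your Poisson-equation route is more explicit but buys nothing here; the paper's direct mixing argument is both shorter and uniform in $\al$.
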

\begin{proof}
Again let $\phi$ be a $C^\infty_0(\R)$ test function. From the definition \eqref{eq:def_cV_eps} of $\widehat \cV^\eps$, we deduce that
\begin{equation*}
\widehat \cV^\eps_t  = \blla P^0\left( \frac{.}{\eps}\right)  \phi, v^{\eps}(.,t) \brra + \eps^\nu \widehat  \cV^{\eps,b}_t
\end{equation*}
where $\widehat \cV^{\eps,b}$ is bounded in $\bL^2((0,T)\times \Omega)$. Hence since $\langle P^0\rangle =1$, if $\mathfrak{P}^0$ is such that $\mathfrak{P}^0_z= P^0 - 1$, then
\begin{equation*}
\widehat \cV^\eps_t  = \lla \phi, v^{\eps}(.,t) \rra + \eps \blla \mathfrak{P}^0_x \left( \frac{.}{\eps}\right)  \phi, v^{\eps}(.,t) \brra + \eps^\nu \widehat \cV^{\eps,b}_t
\end{equation*}
After  integration by parts, we deduce that:
\begin{equation} \label{eq:mart_pb_1}
\widehat \cV^\eps_t  = \lla \phi, v^{\eps}(.,t) \rra + \eps \blla \mathfrak{P}^0 \left( \frac{.}{\eps}\right) , (\phi v^{\eps}(.,t))_x \brra + \eps^\nu\widehat  \cV^{\eps,b}_t
\end{equation}
Since $v^\eps$ is bounded in $L^2((0,T)\times \Omega,H^1(\R))$, the middle term on the right-hand side converges to zero.

Now from \eqref{eq:cont_estim_unif_eps} the sequence $\cV^\eps$ is also tight in $C(0,T;\R)$. Recall that for $\al < 2$, $\varpi > 0$. Using \eqref{eq:dyn_cV} we have for some $\nu > 0$ and for any $0\leq t \leq s \leq T$
\begin{align}\label{eq:mart_pb_2}
\widehat \cV^\eps_s -\widehat  \cV^\eps_t & =  \int_t^s \left[ \lla (P^0 a^\eps) \phi_{xx},  v^\eps \rra + 2 \lla (\widehat P^0 a^\eps)_{z} \phi_{xx},  v^\eps \rra\right] dr \\ \nonumber
& + \eps^{\nu}  \int_t^s \widetilde M^\eps_r \sigma(\xi_\frac t{\eps^\alpha}) dB_r + \eps^{\nu}  \int_t^s \widetilde \cB^\eps_r dr.
\end{align}
For the first integral we define
$$\widetilde P^0(z,y) = P^0(z) a(z,y) - \langle P^0 a\rangle(y)+ 2(\widehat P^0 a)_{z}(z,y)$$
which has zero mean value in $z$. We can define again $\widetilde{\mathfrak{P}}^0$ such that $\widetilde{\mathfrak{P}}^0_z= \widetilde P^0$ and thus
\begin{eqnarray} \label{eq:mart_pb_3}
& &\int_t^s \blla (P^0 a^\eps + 2(\widehat P^0 a^\eps)_{z} )\phi_{xx},  v^\eps \brra  dr\\ \nonumber
&& \quad = \int_t^s \blla \langle P^0 a\rangle(\xi_{r/\eps^\al}) \phi_{xx},  v^\eps \brra  dr +\eps \int_t^s \blla \widetilde{\mathfrak{P}}^0_x \phi_{xx},  v^\eps\brra  dr \\ \nonumber
&& \quad = \int_t^s \blla \langle \overline{P^0 a}\rangle \phi_{xx},  v^\eps \brra  dr + \int_t^s \blla (\langle P^0 a\rangle(\xi_{r/\eps^\al}) - \langle \overline{P^0 a}\rangle) \phi_{xx},  v^\eps \brra  dr \\ \nonumber
&& \qquad  +\eps \int_t^s \blla\widetilde{\mathfrak{P}}^0_x \phi_{xx},  v^\eps \brra  dr.
\end{eqnarray}
For the term
$$\mathfrak{E}^\eps(t)= \int_0^t \blla (\langle P^0 a\rangle(\xi_{r/\eps^\al}) - \langle \overline{P^0 a}\rangle) \phi_{xx},  v^\eps \brra  dr,$$
the uniform bound \eqref{eq:Linfty_estim_v_eps}, together with the mixing property implied by assumption {\bf (A)}, lead to the convergence to zero of this term, a.s. and in $\bL^2(\Omega)$ by the dominated convergence theorem, uniformly w.r.t. $t \in [0,T]$.

Combining \eqref{eq:mart_pb_1}, \eqref{eq:mart_pb_2} and \eqref{eq:mart_pb_3}, we obtain for some $\nu > 0$
\begin{eqnarray*}
F_\phi(t,  v^{\eps})& = &\lla \phi, v^{\eps}(.,t) \rra  - \int_0^t \blla \langle \overline{P^0 a}\rangle \phi_{xx},  v^\eps \brra dr \\
& =  & \eps^{\nu}  \int_0^t \widetilde M^\eps_r \sigma(\xi_\frac t{\eps^\alpha})  dB_r + \eps^{\nu} \widehat \cV^{\eps,b}_t + \mathfrak{E}^\eps(t).
\end{eqnarray*}
Let $\Theta^\eps_s$ be any continuous (in the sense of the topology of $V_T$) and bounded functional of $\{v^{\eps}_\tau , \ 0 \leq  \tau \leq s\}$.
We have proved that for $0\leq s \leq t$
$$\lim_{\eps \downarrow 0} \E  \left| (F_\phi(t,  v^{\eps}) - F_\phi(s,  v^{\eps})) \Theta^\eps_s \right| = 0.$$
If we compute the quadratic variation\footnote{Denoted by $\left[  \left[ . \right]  \right]$ to be distinguishable from the mean over a period or the scalar product in $L^2$.} of the process $\widehat \cV^\eps$ we have
\begin{eqnarray*}
\left[ \! \left[ \widehat \cV^\eps \right]  \!  \right]_s -\left[  \!  \left[ \widehat \cV^\eps \right]  \!  \right]_t & = & \eps^{2\nu}  \int_t^s \left| \widetilde M^\eps_r\right|^2 \| \sigma(\xi_\frac t{\eps^\alpha}) \|^2 dr.
\end{eqnarray*}
Recall that for a vector $v \in \R^n$, $\| v \|^2 = \mbox{Trace}(v v^*)$ is the Euclidean norm.
We deduce that
\begin{equation*}
\lim_{\eps \downarrow 0} \mE \left[ (F_\phi(t,  v^{\eps}) - F_\phi(s,  v^{\eps}))^2 \Theta^\eps_s\right]  = \lim_{\eps \downarrow 0} \mE \left[ \left| \left[ \! \left[ \widehat  \cV^\eps \right]  \!  \right]_s - \left[ \! \left[ \widehat  \cV^\eps \right]  \!  \right]_t \right| \Theta^\eps_s\right]
\end{equation*}
is equal to zero. Passing to the limit, we deduce that $\{ F_\phi(t,v^0), \ 0\leq t \leq T\}$ is a square integrable martingale with respect to the natural filtration of $v^0$, with a null quadratic variation process. In other words we proved that the sequence $v^\eps$ weakly converges in $V_T$ to the unique solution $v^0$ of the PDE:
$$dv^0 = \langle \overline{P^0 a} \rangle v^0_{xx} dt$$
with initial condition zero. Hence $v^0=0$ and this achieves the proof of the Proposition.
\end{proof}

For $\al >2$, the preceding result has to be modified since $\varpi = 0$, which implies that there is a zero order term in the martingale part $\widehat M^\eps$ in \eqref{eq:Ito_form_calV_eps}.
\begin{proposition}\label{prop:limit_ident_al>2}
If $\al > 2$, the sequence $v^\eps$ weakly converges in $V_T$ to the unique solution $\widetilde r^0$ of the SPDE:
$$d\widetilde r^0 = \langle \overline{P^0 a} \rangle \widetilde r^0_{xx} dt + \left( \overline{\| \langle P^0 \widetilde \Upsilon^0 \rangle \sigma\|^2 }\right)^{1/2} u^0_x d W_t.$$
\end{proposition}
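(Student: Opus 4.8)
The proof will closely parallel that of Proposition~\ref{prop:limit_ident_al<2}; the only genuinely new feature is that, since $\varpi=0$ when $\al>2$, a zero order term survives in the martingale part, so one has to compute the quadratic variation of the limiting martingale problem instead of showing that it vanishes. Fix $\phi\in C_0^\infty(\R)$. Exactly as in the case $\al<2$, using $\langle P^0\rangle=1$, an integration by parts and the uniform bound \eqref{eq:bound_H1}, I first rewrite
\[
\widehat\cV^\eps_t=\lla\phi,v^\eps(.,t)\rra+\eps\,\blla\mathfrak{P}^0(\cdot/\eps),(\phi v^\eps(.,t))_x\brra+\eps^{\nu}\widehat\cV^{\eps,b}_t,
\]
where $\widehat\cV^{\eps,b}$ is bounded in $\bL^2((0,T)\times\Omega)$ and the middle term tends to $0$ by \eqref{eq:bound_H1}. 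Next, starting from \eqref{eq:dyn_cV} (which for $\al>2$, i.e.\ $\varpi=0$, keeps the last stochastic integral with coefficient $1$) and treating the absolutely continuous term by the same device as for $\al<2$ — introducing $\widetilde P^0(z,y)=P^0(z)a(z,y)-\langle P^0a\rangle(y)+2(\widehat P^0a)_z(z,y)$, which has zero mean in $z$, taking a $z$-primitive, integrating by parts, and then replacing $\langle P^0a\rangle(\xi_{r/\eps^\al})$ by its invariant average $\langle\overline{P^0a}\rangle$ at the cost of an error $\mathfrak{E}^\eps(t)$ that goes to $0$ a.s.\ and in $\bL^2(\Omega)$ uniformly in $t$ (by \eqref{eq:Linfty_estim_v_eps} and the mixing from {\bf (A)}) — I arrive at
\[
F_\phi(t,v^\eps):=\lla\phi,v^\eps(.,t)\rra-\int_0^t\lla\langle\overline{P^0a}\rangle\phi_{xx},v^\eps\rra\,dr=\int_0^t\blla P^0(\cdot/\eps)\phi,\widetilde\Upsilon^0(\cdot/\eps,\xi_{r/\eps^\al})u^0_x\brra\,\sigma(\xi_{r/\eps^\al})\,dB_r+o(1),
\]
the $o(1)$ being in probability (and in $\bL^1$) uniformly on $[0,T]$.

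Now I identify the limit of this martingale. Since $P^0$ and $\widetilde\Upsilon^0$ are $1$-periodic in the fast variable while $\phi u^0_x$ is smooth and rapidly decaying, the standard averaging lemma gives, uniformly in $y$ on compact sets and in $r\in[0,T]$, $\blla P^0(\cdot/\eps)\phi,\widetilde\Upsilon^0(\cdot/\eps,y)u^0_x\brra\to\langle P^0\widetilde\Upsilon^0\rangle(y)\,\lla\phi,u^0_x(.,r)\rra$, so that, expanding the square and discarding the $\eps^\nu$-small cross terms (controlled via \eqref{eq:bound_H1}, polynomial moments of $\xi$, and Burkholder--Davis--Gundy),
\[
[\![F_\phi(\cdot,v^\eps)]\!]_t=\int_0^t\big\|\langle P^0\widetilde\Upsilon^0\rangle(\xi_{r/\eps^\al})\,\sigma(\xi_{r/\eps^\al})\big\|^2\,\lla\phi,u^0_x(.,r)\rra^2\,dr+o(1).
\]
Freezing the slowly varying factor $\lla\phi,u^0_x(.,r)\rra$ over mesoscopic time intervals and applying the ergodic theorem for $\xi$ under condition \ref{a6} (see \cite{pard:vere:01, CKP_2001}), the right-hand side converges to $\big(\overline{\|\langle P^0\widetilde\Upsilon^0\rangle\sigma\|^2}\big)\int_0^t\lla\phi,u^0_x(.,r)\rra^2\,dr$. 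One then repeats the martingale-problem argument of Proposition~\ref{prop:limit_ident_al<2}: for $0\le s\le t$ and any bounded continuous functional $\Theta^\eps_s$ of $\{v^\eps_\tau,\ \tau\le s\}$ one has $\E\big|(F_\phi(t,v^\eps)-F_\phi(s,v^\eps))\Theta^\eps_s\big|\to0$ and $\E\big[(F_\phi(t,v^\eps)-F_\phi(s,v^\eps))^2\Theta^\eps_s\big]-\E\big[([\![F_\phi(\cdot,v^\eps)]\!]_t-[\![F_\phi(\cdot,v^\eps)]\!]_s)\Theta^\eps_s\big]\to0$; hence along any weakly convergent subsequence $v^\eps\Rightarrow\widetilde r^0$ in $V_T$, the process $F_\phi(t,\widetilde r^0)$ is a square-integrable continuous martingale for the natural filtration of $\widetilde r^0$ with quadratic variation $\big(\overline{\|\langle P^0\widetilde\Upsilon^0\rangle\sigma\|^2}\big)\int_0^t\lla\phi,u^0_x(.,r)\rra^2\,dr$.

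To conclude, note that by polarization the cross-variation of $F_\phi(\cdot,\widetilde r^0)$ and $F_\psi(\cdot,\widetilde r^0)$ equals $\big(\overline{\|\langle P^0\widetilde\Upsilon^0\rangle\sigma\|^2}\big)\int_0^t\lla\phi,u^0_x\rra\lla\psi,u^0_x\rra\,dr$: this rank-one structure means there is a single standard one-dimensional Brownian motion $W$ (on a possibly enlarged space) with $F_\phi(t,\widetilde r^0)=\big(\overline{\|\langle P^0\widetilde\Upsilon^0\rangle\sigma\|^2}\big)^{1/2}\int_0^t\lla\phi,u^0_x(.,r)\rra\,dW_r$ for every $\phi$. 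Since $F_\phi(0,\widetilde r^0)=\lla\phi,\widetilde r^0(.,0)\rra=0$, this is exactly the weak formulation of $d\widetilde r^0=\langle\overline{P^0a}\rangle\widetilde r^0_{xx}\,dt+\big(\overline{\|\langle P^0\widetilde\Upsilon^0\rangle\sigma\|^2}\big)^{1/2}u^0_x\,dW_t$ with $\widetilde r^0(x,0)=0$; well-posedness of this linear SPDE pins down the law of the limit uniquely, so the whole family $v^\eps$ converges weakly in $V_T$ to $\widetilde r^0$.

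The step I expect to be the main obstacle is the rigorous passage to the limit in the quadratic variation, where one must run \emph{simultaneously} the homogenization average over the periodic variable $x/\eps$ and the ergodic average over the fast time $r/\eps^\al$ of $\xi$, while the error terms still carry the only $\bL^2$-bounded factor $v^\eps$. Making this quantitative requires combining \eqref{eq:bound_H1}--\eqref{eq:Linfty_estim_v_eps} with the mixing estimates of \cite{CKP_2001, pard:vere:01} and a block decomposition of $[0,t]$ in the spirit of the treatment of $\mathfrak{E}^\eps$ above; a secondary but delicate point is the consistent reconstruction of the single scalar driving Brownian motion $W$ from the whole family $\{F_\phi(\cdot,\widetilde r^0)\}_\phi$, handling the possible zeros of $\lla\phi,u^0_x(.,\cdot)\rra$.
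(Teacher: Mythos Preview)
Your proposal is correct and follows essentially the same route as the paper: reduce $\widehat\cV^\eps_t$ to $\lla\phi,v^\eps\rra$ plus small terms, identify $F_\phi$ and its quadratic variation, and pass to the limit via the martingale problem. The paper makes the periodic-averaging step in the quadratic variation fully explicit by writing $\mathfrak{G}(y)=\langle P^0\widetilde\Upsilon^0\rangle(y)$ and introducing the periodic primitive $\mathfrak{Q}^0$ with $\mathfrak{Q}^0_z=P^0\widetilde\Upsilon^0-\mathfrak{G}$, so that one integration by parts bounds the oscillatory contribution by $C\eps^2$ uniformly in $y$; this decouples the two averagings (periodic first, quantitatively, then ergodic in time), which dissolves your stated ``main obstacle'' of a simultaneous limit. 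Note also that the leading quadratic-variation term $\int_t^s\|\lla P^0\phi,\widetilde\Upsilon^0 u^0_x\rra\sigma\|^2\,dr$ carries no factor of $v^\eps$ at all---only the deterministic $u^0_x$---so the only $v^\eps$-dependent pieces sit inside $\eps^{\nu}\widetilde M^\eps$ and are already negligible; your worry about ``error terms still carrying $v^\eps$'' is therefore misplaced. Your explicit polarization/rank-one argument for reconstructing a single scalar $W$ is a nice addition the paper leaves implicit.
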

\begin{proof}
We argue almost as in the proof of Proposition \ref{prop:limit_ident_al<2}. In particular the beginning of the proof is the same. But now \eqref{eq:mart_pb_2} becomes:
%
%
\begin{eqnarray*}
&&\widehat \cV^\eps_s - \widehat \cV^\eps_t = \int_t^s \left[ \lla (P^0 a^\eps) \phi_{xx}, v^\eps \rra + 2 \lla (\widehat P^0 a^\eps)_{z} \phi_{xx},  v^\eps \rra\right] dr \\ \nonumber
&& \quad+ \int_t^s  \lla P^0 \phi,\widetilde \Upsilon^0 u^0_x \rra \sigma(\xi_\frac t{\eps^\alpha}) dB_r +\eps^{\nu}  \int_t^s \widetilde M^\eps_r \sigma(\xi_\frac t{\eps^\alpha}) dB_r +  \int_t^s \widetilde \cB^\eps_r dr.
\end{eqnarray*}
Now we obtain for some $\nu > 0$
\begin{eqnarray*}
F_\phi(t, v^{\eps})& = &\lla \phi, v^{\eps}(.,t) \rra  - \int_0^t \blla \langle \overline{P^0 a}\rangle \phi_{xx},  v^\eps \brra dr \\
& =  & \int_0^t  \lla P^0 \phi,\widetilde \Upsilon^0 u^0_x \rra \sigma(\xi_\frac t{\eps^\alpha}) dB_r + \eps^{\nu}  \int_0^t \widetilde M^\eps_r \sigma(\xi_\frac t{\eps^\alpha}) dB_r + \eps^{\nu}\widehat  \cV^{\eps,b}_t + \mathfrak{E}^\eps(t).
\end{eqnarray*}
The term
$$\mathfrak{E}^\eps(t)= \int_0^t \blla (\langle P^0 a\rangle(\xi_{r/\eps^\al}) - \langle \overline{P^0 a}\rangle) \phi_{xx},  v^\eps \brra dr,$$
can be handled as before and we have proved that for any continuous (in the sense of the topology of $V_T$) and bounded functional $\Theta^\eps_s$ of $\{v^{\eps}_\tau , \ 0 \leq  \tau \leq s\}$ and any $0\leq s \leq t$
$$\lim_{\eps \downarrow 0} \mE  \left| (F_\phi(t,  \widetilde r^{\eps}) - F_\phi(s,  \widetilde r^{\eps})) \Theta^\eps_s \right| = 0.$$
Concerning the quadratic variation of the process $\widehat \cV^\eps$, we have
\begin{eqnarray*}
\left[ \! \left[ \widehat \cV^\eps\right] \!\right]_s -\left[ \! \left[ \widehat \cV^\eps\right] \!\right]_t & = &  \int_t^s \left(  \left| \lla P^0 ,\widetilde \Upsilon^0 \phi u^0_x\rra \right|^2  + \eps^{2\nu}  \left|\widetilde M^\eps_r\right|^2 \right)  \| \sigma(\xi_\frac t{\eps^\alpha}) \|^2dr.
\end{eqnarray*}
Recall that for a vector $v \in \R^n$, $\| v \|^2 = \mbox{Trace}(v v^*)$ is the Euclidean norm. Again if we denote
$$\mathfrak{G}(y) = \langle P^0 \widetilde \Upsilon^0 \rangle (y), \qquad \mathfrak{Q}^0_z(z,y) = P^0(z)  \widetilde \Upsilon^0(z,y) - \mathfrak{G}(y)$$
the mean of $P^0 \widetilde \Upsilon^0$ w.r.t. $z$ and the periodic antiderivative of $P^0  \widetilde \Upsilon^0-\mathfrak{G}$,
then
\begin{eqnarray*}
&&\int_t^s \left(  \left| \lla P^0 ,\widetilde \Upsilon^0 \phi u^0_x\rra \right|^2  \right)  \| \sigma(\xi_\frac t{\eps^\alpha}) \|^2dr= \int_t^s  \left\| \lla P^0 \widetilde \Upsilon^0, \phi u^0_x\rra \sigma(\xi_\frac t{\eps^\alpha}) \right\|^2dr \\
&&\quad =\int_t^s  \left\| \lla P^0 \widetilde \Upsilon^0-\mathfrak{G}(\xi_\frac t{\eps^\alpha}), \phi u^0_x\rra \sigma(\xi_\frac t{\eps^\alpha}) \right\|^2dr\\
&& \qquad +\int_t^s  \left\|  \lla  \phi, u^0_x \rra  \mathfrak{G}(\xi_{r/\eps^\al})   \sigma(\xi_\frac t{\eps^\alpha})  \right\|^2 dr.
\end{eqnarray*}
And we have
$$\int_t^s  \left\| \lla P^0 \widetilde \Upsilon^0-\mathfrak{G}(\xi_\frac t{\eps^\alpha}), \phi u^0_x\rra \sigma(\xi_\frac t{\eps^\alpha}) \right\|^2dr \leq \eps^2 \int_t^s\left\|   \lla \mathfrak{Q}^0_x  \phi, u^0_x \rra   \sigma(\xi_\frac t{\eps^\alpha})\right\|^2dr.$$
Moreover again using assumption {\bf (A)}, we obtain that
$$\int_t^s  \left\|  \lla  \phi, u^0_x \rra  \mathfrak{G}(\xi_{r/\eps^\al})   \sigma(\xi_\frac t{\eps^\alpha})  \right\|^2 dr $$
converges a.s. and in $\bL^2(\Omega)$ to
$$ \int_t^s \overline{ \left\| \mathfrak{G}(\xi_{r/\eps^\al})  \sigma(\xi_\frac t{\eps^\alpha}) \lla \phi, u^0_x \rra \right\|^2} dr = \int_t^s \overline{ \left\| \langle P^0 \widetilde \Upsilon^0 \rangle \sigma \right\|^2} \lla \phi, u^0_x \rra^2dr .$$
We deduce that
\begin{equation*}
\lim_{\eps \downarrow 0} \mE \left[ (F_\phi(t,  v^{\eps}) - F_\phi(s, v^{\eps}))^2 \Theta^\eps_s\right]  = \lim_{\eps \downarrow 0} \mE \left( \left| \left[ \! \left[ \widehat  \cV^\eps\right] \!\right]_s -\left[ \! \left[ \widehat  \cV^\eps\right] \!\right]_t \right| \Theta^\eps_s\right)
\end{equation*}
is equal to
$$ \int_t^s \overline{ \left\| \langle P^0 \widetilde \Upsilon^0 \rangle \sigma \right\|^2} \lla \phi, u^0_x \rra^2dr.$$
Passing to the limit, we deduce that $\{ F_\phi(t,\widetilde r^0), \ 0\leq t \leq T\}$ is a square integrable martingale with respect to the natural filtration of $\widetilde r^0$, with the associated quadratic variation process given by
$$ \overline{\| \langle P^0 \widetilde \Upsilon^0 \rangle\sigma \|^2}   \lla \phi, u^0_x \rra^2 t.$$
This achieves the proof of the Proposition.
\end{proof}

Let us remark to conclude this part that $P^0 = 1 + \chi^0_z$, thus $\langle \overline{P^0 a} \rangle  = \aeff$. Moreover
$$\langle P^0 \widetilde\Upsilon^0 \rangle = -\langle \chi^0 \Upsilon^0 \rangle.$$
Hence
$$d\widetilde r^0 = \aeff  \widetilde r^0_{xx} dt + \left( \overline{\|\langle \chi^0 \Upsilon^0 \rangle \sigma \|^2} \right)^{1/2} u^0_x dW_t.$$

\subsubsection{Conclusion}

Now we know that there exists a constant $C$ independent of $\eps$ such that
$$\mE \left( \|v^\eps_x\|^2_{L^2((0,T)\times \R)} \right) \leq C.$$
By Tchebychev's inequality for any $\delta > 0$, there exists a constant $K$ such that
$$\Pro( \|v^\eps_x\|^2_{L^2((0,T)\times \R)} \geq K) \leq C^2 / K^2 \leq \delta$$
provided $K$ is large enough. In other words, $v^\eps_x$ is tight for the weak topology on $L^2((0,T)\times \R)$. Using the dense set of $C^\infty_0$ functions and Propositions \ref{prop:limit_ident_al<2} and \ref{prop:limit_ident_al>2}, we deduce that 
$r^\eps$ weakly converges to the solution of:
\begin{itemize}
\item For $\al < 2$:
$$dr^0 = \aeff r^0_{xx} dt,$$
with initial value 0, that is, $r^0=0$.
\item For $\al > 2$:
$$dr^0 = \aeff r^0_{xx} dt + \left( \overline{\| \langle \chi^0 \Upsilon^0 \rangle \sigma \|^2} \right)^{1/2} u^0_{xx} dW_t$$
again with initial value zero.
\end{itemize}

The proof of Theorem \ref{thm:main_result} is now complete in the case $\al < 2$, using Propositions \ref{prop:formal_exp_al<2} and \ref{prop:weak_conv_al<2} and the preceding results on the convergence of $r^\eps$. For $\al > 2$, using Proposition \ref{prop:dev_al}, the proof will be complete after the study of $\rho^\eps$, which is the aim of the next section \ref{sect:rest_init_cond}. Before, let us consider the case $\al < 1$, for which an easier proof can be done.

\subsection{Case $\alpha< 1$} \label{ssect:conv_al<1}

Here the assumption that $d=1$ is unnecessary for our arguments. In the problem \eqref{eq:mart_problem}, we now have $\varpi-1 =1-\alpha > 0$. Let us take $w^k \equiv 0$ for any $k$ :
\begin{eqnarray*}
dr^\eps &=& (\cA^\eps r^\eps) dt-\sum\limits_{k=1}^{K_0} \eps^{k\delta-\al/2} w^k(x,t) \ dt  \\ \nonumber
&-&\eps^{\varpi-1} \sum\limits_{k=0}^{K_0}\eps^{k\delta}\Upsilon^k\Big(\frac x\eps,\xi_\frac t{\eps^\alpha}\Big) u_x^{k}(x,t)\sigma(\xi_\frac t{\eps^\alpha}) \,dB_t \\
& =&  (\cA^\eps r^\eps) dt + \eps^{1-\alpha} \Theta^\eps\Big(\frac x\eps,\xi_\frac t{\eps^\alpha} ,x, t \Big) \,dB_t.
\end{eqnarray*}
Let us define
$$v^\eps_t = \int_{\R^d} r^\eps(x,t)^2 dx = \|r^\eps(\cdot,t)\|^2_{L^2(\R^d)}.$$
It\^o's formula leads to
\begin{eqnarray*}
v^\eps_t  & = & \int_0^t \int_{\R^d} r^\eps(x,s) \mathrm{div} \Big[{\rm a}\Big(\frac x\eps,\xi_\frac s{\eps^\alpha}\Big)\nabla r^\eps(x,s)\Big] dx ds \\
& +&  \eps^{(1-\alpha)} \int_0^t \int_{\R^d} r^\eps(x,s) \Theta^\eps\Big(\frac x\eps,\xi_\frac t{\eps^\alpha} ,x, t \Big)dx \,dB_s \\
& + & \eps^{2(1-\alpha)} \int_0^t \int_{\R^d} \left\| \Theta^\eps\Big(\frac x\eps,\xi_\frac t{\eps^\alpha} ,x, t \Big)\right\|^2 dx \,ds.
\end{eqnarray*}
An integration by part shows that
\begin{eqnarray*}
&& v^\eps_t + \int_0^t \int_{\R^d} \nabla r^\eps(x,s)  \Big[{\rm a}\Big(\frac x\eps,\xi_\frac s{\eps^\alpha}\Big)\nabla r^\eps(x,s)\Big] dx ds \\
&&\quad  =  \eps^{(1-\alpha)} \int_0^t \int_{\R^d} r^\eps(x,s) \Theta^\eps\Big(\frac x\eps,\xi_\frac t{\eps^\alpha} ,x, t \Big)dx \,dB_s \\
&& \quad + \eps^{2(1-\alpha)} \int_0^t \int_{\R^d} \left\| \Theta^\eps\Big(\frac x\eps,\xi_\frac t{\eps^\alpha} ,x, t \Big)\right\|^2 dx \,ds.
\end{eqnarray*}
From Condition \ref{a3}, taking the expectation, there exists a constant $C$ independent of $\eps$ such that
\begin{equation*} 
\mE \int_0^T \left\| \nabla r^\eps(\cdot,s) \right\|^2_{L^2(\R^d)} ds \leq C \eps^{2(1-\alpha)}.
\end{equation*}
Moreover by Burkholder-Davis-Gundy inequality, we have
\begin{equation*}
\mE \left[ \sup_{t\in[0,T]}  v^\eps_t \right] = \mE \left[ \sup_{t\in[0,T]} \|r^\eps(\cdot,t)\|^2_{L^2(\R^d)} \right] \leq C \eps^{2(1-\alpha)}.
\end{equation*}
Hence if $\alpha < 1$, the convergence of $r^\eps$ to zero holds in $\bL^2([0,T]\times \Omega;H^1(\R^d))$ (and in $\bL^\infty([0,T];L^2(\R^d))$ in mean w.r.t. $\omega$).

\section{Role of the initial condition in the discrepancy }

\label{sect:rest_init_cond}

Let us note that this part\footnote{Let us emphasize that all results of this section hold in $d >1$, that is for $z\in \bT^d$.} only concerns the case $\al > 2$ and the behavior of $\rho^\eps$. Recall the setting concerning $\rho^\eps$. It satisfies:
$$d \rho^\eps = (\cA^\eps \rho^\eps) dt$$
with initial condition \eqref{eq:init_cond_R_eps}:
$$\rho^\eps(x,0) = -\sum_{k=1}^{J_1}  \eps^{k-\al/2} \left[ \cI_k  + \sum_{\ell=1}^k \cI_{k-\ell}  \chi^{\ell-1}\left( \frac{x}{\eps}\right) \right] \partial_x^k u^0(x,0).$$
By linearity we can write:
\begin{equation*}
\rho^\eps (x,t)= \sum_{k=1}^{J_1} \rho^{k,\eps}(x,t)
\end{equation*}
where the functions $\rho^{k,\eps}$ have the same dynamics \eqref{eq:SPDE_eps_1}, $d \rho^{k,\eps} = (\cA^\eps \rho^{k,\eps}) dt,$
but with initial condition
\begin{equation*}
\rho^{k,\eps}(x,0) = - \eps^{k-\al/2} \left[ \cI_k  + \sum_{\ell =1}^k \cI_{k-\ell } \chi^{\ell-1} \left( \frac{x}{\eps}\right)  \right] \partial^{k}_x u^{0} \left(x,0\right)  .
\end{equation*}
Recall that from \eqref{eq:u_0}, $u^0$ is a smooth function such that
$u^0_t = \aeff u^0_{xx}$, with initial condition $u^0(x,0) = \imath(x).$

\vspace{0.5cm}
To lighten the notation, let us fix $k=1,\ldots,J_1$ and define $\varrho^\eps = \varrho^{k,\eps}$ as the solution of \eqref{eq:SPDE_eps_1} with initial condition
$$\varrho^{\eps}(x,0)  = \left[ \cI_k  + \sum_{\ell =1}^k \cI_{k-\ell }  \chi^{\ell-1} \left( \frac{x}{\eps}\right)   \right] \partial^{k}_x u^{0} \left(x,0\right)=  A_k\left( \frac{x}{\eps}\right) \partial^{k}_x u^{0} \left(x,0\right).$$
Thus $\rho^{k,\eps} = - \eps^{k-\al/2} \varrho^{\eps} = - \eps^{k-\al/2} \varrho^{k,\eps}$.

\begin{lemma}
The function $\varrho^\eps$ admits the following expansion:
\begin{align} \label{eq:expansion_varrho}
\varrho^{\eps}(x,t) & = \beta^{0,\eps} \left(  \frac{x}{\eps},\frac{t}{\eps^2} \right) \partial^{k}_x u^{0} \left(x,t\right)  \\ \nonumber
& +  \sum_{\ell = 1}^{J_1 - k } \eps^\ell \left[ \widehat m^{\ell-1,\eps}\left(  \frac{t}{\eps^2} \right) + \beta^{\ell,\eps} \left(  \frac{x}{\eps},\frac{t}{\eps^2} \right) \right] \partial^{k+\ell}_x u^{0} \left(x,t\right) \\ \nonumber
& +  \Gamma^{J_1-k,\eps}(x,t).
\end{align}
The functions $\beta^{0,\eps}$, $\widehat m^{0,\eps}$ and $\mu^{0,\eps}$ are defined by the following equations:
 \begin{equation} \label{eq:order_0}
 \left\{
 \begin{array}{rl}
\partial_t \beta^{0,\eps}(z,t) & =  (\mathfrak{a}^\eps\beta^{0,\eps}_z)_z , \qquad \beta^{0,\eps}(z,0) = A^k \left( z\right) \\ 
m^{0,\eps}(t) & =  \langle  a \left(  .,\xi_{t/\eps^\delta} \right) \beta^{0,\eps}_z \left(  .,t \right) \rangle\\
\partial_t \widehat m^{0,\eps}(t) & =  m^{0,\eps}(t), \quad  \widehat m^{0,\eps}(0)=0
\\
\mu^{0,\eps}(z,t) & = \mathfrak{a}^\eps \beta^{0,\eps}_z(z,t) - m^{0,\eps}(t) \\ 
\partial_t \beta^{1,\eps}(z,t)  & =  (\mathfrak{a}^\eps\beta^{1,\eps}_z)_z + (\mu^{0,\eps}+ (\mathfrak{a}^\eps\beta^{0,\eps})_z ).
\end{array}\right. \end{equation}
with $\beta^{1,\eps}(z,0)=0$. The other quantities are given by:
 \begin{equation} \label{eq:order_1}
 \left\{
 \begin{array}{rl} 
m^{1,\eps}(t) & =  \langle  a \left(  .,\xi_{t/\eps^\delta} \right) \beta^{1,\eps}_z \left(  .,t \right) \rangle + \langle (\mathfrak{a}^\eps-\aeff) \beta^{0,\eps} \rangle, \\ 
\partial_t \widehat m^{1,\eps}(t) & =  m^{1,\eps}(t),\\  
\mu^{1,\eps}(z,t) & =  \mathfrak{a}^\eps \beta^{1,\eps}_z - m^{1,\eps}(t) + (\mathfrak{a}^\eps-\aeff) \beta^{0,\eps}.
\end{array}\right. \end{equation}
And for $\ell \geq 2$, the relations are defined recursively by:
 \begin{equation} \label{eq:order_ell}
 \left\{
 \begin{array}{rl} 
\partial_t \beta^{\ell,\eps} &  =  (\mathfrak{a}^\eps\beta^{\ell,\eps}_z)_z  +
(\widehat m^{\ell-2,\eps}(t)\mathfrak{a}^\eps_z + \mu^{\ell-1,\eps} + (\mathfrak{a}^\eps \beta^{\ell-1,\eps})_z ), \\ 
\beta^{\ell,\eps}(z,0) &=  0, \\ 
m^{\ell,\eps}(t) & =  \langle  \mathfrak{a}^\eps \beta^{\ell,\eps}_z \rangle+ \langle (\mathfrak{a}^\eps-\aeff) (\widehat m^{\ell-2,\eps}+\beta^{\ell-1,\eps}) \rangle, \\  
\partial_t \widehat m^{\ell,\eps}(t) & =  m^{\ell,\eps}(t),\\ 
\mu^{\ell,\eps}(z,t) & =  \mathfrak{a}^\eps \beta^{\ell,\eps}_z - m^{\ell,\eps}(t) + (\mathfrak{a}^\eps-\aeff) (\widehat m^{\ell-2,\eps}+\beta^{\ell-1,\eps}) .
\end{array}\right. \end{equation}
The last term in expansion \eqref{eq:expansion_varrho} is of order $\eps^{\nu}$ with $\nu > \al/2-k$.
\end{lemma}
Let us emphasize here that all terms defined in this lemma depend on $k$.

\vspace{0.3cm}
\noindent \begin{proof}
Let us define on $\mathbb{T}\times (0,\infty)$, $\beta^{0,\eps}$ as in \eqref{eq:order_0}. Since $A_k$ is periodic, $\beta^{0,\eps}$ is well-defined.
Let us assume that
$$\varrho^{\eps}(x,t)= \beta^{0,\eps} \left(  \frac{x}{\eps},\frac{t}{\eps^2} \right) \partial^{1}_x u^{0} \left(x,t\right) + \Gamma^{0,\eps}(x,t).$$
Then $\Gamma^{0,\eps}$ satisfies: $\Gamma^{0,\eps}(x,0)=0$ and
\begin{eqnarray*}
\partial_t \Gamma^{0,\eps}  =  (\cA^\eps \Gamma^{0,\eps}) & + & \frac{1}{\eps} \left( a^\eps \beta^{0,\eps}_z + (a^\eps \beta^{0,\eps})_z \right)\partial^{2}_x u^{0} \left(x,t\right) \\
& + &  (a^\eps- \aeff) \beta^{0,\eps} \partial^{3}_x u^{0} \left(x,t\right).
\end{eqnarray*}
In \eqref{eq:order_0}, we define $m^{0,\eps}(t)$ 
as the mean value w.r.t. $z$ of the function $\mathfrak{a}^\eps \beta^{0,\eps}_z$, $\widehat m^{0,\eps}(t)$ 
and $\mu^{0,\eps}$ 
such that the mean value of $\mu^{0,\eps}$ w.r.t. $z$ is zero. Hence we can define on $\mathbb{T} \times (0,\infty)$ the function $\beta^{1,\eps}$. 
Now we assume that
\begin{eqnarray*}
\Gamma^{0,\eps}(x,t) & = & \eps \left[ \widehat m^{0,\eps}\left(  \frac{t}{\eps^2} \right)+ \beta^{1,\eps} \left(  \frac{x}{\eps},\frac{t}{\eps^2} \right) \right] \partial^{2}_x u^{0} \left(x,t\right)+ \Gamma^{1,\eps}(x,t).
\end{eqnarray*}
To study the behaviour of $\Gamma^{1,\eps}$, let us remark first that
\begin{eqnarray*}
\partial_t \Gamma^{1,\eps}  =(\cA^\eps \Gamma^{1,\eps}) & + &  \left[  \widehat m^{0,\eps}(t/\eps^2)a^\eps_z  + \left( a^\eps \beta^{1,\eps}_z + (a^\eps \beta^{1,\eps})_z \right)  \right]  \partial^{3}_x u^{0} \left(x,t\right) \\
& + & (a^\eps- \aeff) \beta^{0,\eps} \partial^{3}_x u^{0} \left(x,t\right) \\
& +& \eps   \left[ \widehat m^{0,\eps}(t/\eps^2)  +  \beta^{1,\eps}\right](a^\eps-\aeff) \partial^{4}_x u^{0} \left(x,t\right).
\end{eqnarray*}
Let us do the same trick again. Using  \eqref{eq:order_1} 
yields:
\begin{eqnarray*}
\partial_t \Gamma^{1,\eps}  & = & (\cA^\eps \Gamma^{1,\eps}) + m^{1,\eps}(t/\eps^2)\partial^{3}_x u^{0} \left(x,t\right)\\
&  +&  \left[  \mu^{1,\eps}  \left(  \frac{x}{\eps},\frac{t}{\eps^2} \right) + \widehat m^{0,\eps}(t/\eps^2)a^\eps_z  +  (a^\eps \beta^{1,\eps})_z  \right] \partial^{3}_x u^{0} \left(x,t\right)\\
& +& \eps  \left[ \widehat m^{0,\eps}(t/\eps^2)  +  \beta^{1,\eps}\right](a^\eps-\aeff) \partial^{4}_x u^{0} \left(x,t\right).
\end{eqnarray*}
If $\beta^{2,\eps}$ is the solution on $\mathbb{T} \times (0,\infty)$ in \eqref{eq:order_ell}
and if
\begin{eqnarray*}
\Gamma^{1,\eps}(x,t) &=&\eps^2  \left[ \widehat m^{1,\eps}\left(  \frac{t}{\eps^2} \right) + \beta^{2,\eps}\left(  \frac{x}{\eps},\frac{t}{\eps^2} \right) \right] \partial^{3}_x u^{0} \left(x,t\right) + \Gamma^{2,\eps}(x,t),
\end{eqnarray*}
then
\begin{eqnarray*}
\partial_t \Gamma^{2,\eps}  & = & (\cA^\eps \Gamma^{2,\eps}) +\eps
 \left[ \widehat m^{1,\eps}(t/\eps^2)a^\eps_z + \left( a^\eps \beta^{2,\eps}_z + (a^\eps \beta^{2,\eps})_z \right)  \right] \partial^{4}_x u^{0} \left(x,t\right)\\
& +& \eps  \left[  \widehat m^{0,\eps}(t/\eps^2)  +  \beta^{1,\eps}\right](a^\eps-\aeff) \partial^{4}_x u^{0} \left(x,t\right)\\
& + & \eps^2  \left[  \widehat m^{1,\eps}(t/\eps^2)  +  \beta^{2,\eps} \right] (a^\eps-\aeff) \partial^{5}_x u^{0} \left(x,t\right)  .
\end{eqnarray*}
And 
\begin{eqnarray*}
\varrho^{\eps}(x,t) & = & \beta^{0,\eps} \left(  \frac{x}{\eps},\frac{t}{\eps^2} \right) \partial^{k}_x u^{0} \left(x,t\right) \\
& + & \eps \left[ \widehat m^{0,\eps}\left(  \frac{t}{\eps^2} \right)+ \beta^{1,\eps} \left(  \frac{x}{\eps},\frac{t}{\eps^2} \right) \right] \partial^{k+1}_x u^{0} \left(x,t\right) \\
& + & \eps^2  \left[ \widehat m^{1,\eps}\left(  \frac{t}{\eps^2} \right) + \beta^{2,\eps}\left(  \frac{x}{\eps},\frac{t}{\eps^2} \right) \right] \partial^{k+2}_x u^{0} \left(x,t\right) + \Gamma^{2,\eps}(x,t).
\end{eqnarray*}
And then we iterate the arguments.
For $\ell= 2,\ldots,J_1 - k $, we can iterate this procedure with $\beta^{\ell}$, $m^{\ell,\eps}$, $\widehat m^{\ell,\eps}$, $\mu^{\ell,\eps}$ given by \eqref{eq:order_ell}, 
and
$$\Gamma^{\ell,\eps} = \eps^{\ell+1} \left[ \widehat m^{\ell,\eps}\left(  \frac{t}{\eps^2} \right) + \beta^{\ell+1,\eps}\left(  \frac{x}{\eps},\frac{t}{\eps^2} \right) \right] \partial^{k+\ell+1}_x u^{0} \left(x,t\right) + \Gamma^{\ell+1,\eps}(x,t).$$
The last term will be of the form
\begin{eqnarray*}
&& \Gamma^{J_1 - k ,\eps}(x,t) = \Gamma^{J_1 - k + 1,\eps}(x,t) \\
&&\quad + \eps^{J_1 - k + 1}   \left[ \widehat m^{J_1 - k,\eps}\left(  \frac{t}{\eps^2} \right) + \beta^{J_1 - k + 1,\eps}\left(  \frac{x}{\eps},\frac{t}{\eps^2} \right) \right] \partial^{J_1  + 1}_x u^{0} \left(x,t\right)
\end{eqnarray*}
and
\begin{eqnarray*}
&&\partial_t \Gamma^{J_1 - k + 1,\eps}  = (\cA^\eps \Gamma^{J_1 - k + 1,\eps}) \\
&&+ \eps^{J_1 - k }
 \left[ \widehat m^{J_1  - k ,\eps}(t/\eps^2)a^\eps_z + \left( a^\eps \beta^{J_1  - k + 1,\eps}_z + (a^\eps \beta^{k,J_1  - k + 1})_z \right)  \right] \partial^{J_1   + 2}_x u^{0} \left(x,t\right)\\
&& + \eps^{J_1 - k }  \left[ \widehat m^{J_1 - k-1 ,\eps}(t/\eps^2)  +  \beta^{J_1 - k ,\eps}\right](a^\eps-\aeff) \partial^{n+J_1 - k + 2}_x u^{0} \left(x,t\right)\\
&& +\eps^{J_1 - k + 1}  \left[ \widehat m^{J_1 - k ,\eps}(t/\eps^2)  +  \beta^{J_1 - k + 1,\eps} \right] (a^\eps-\aeff) \partial^{J_1  + 2}_x u^{0} \left(x,t\right).
\end{eqnarray*}
All powers of $\eps$ are greater than $\al/2-k$. Thus the proof of the Lemma is achieved.
\end{proof}

Now let us precise the behaviour of the correctors $\beta^{\ell,\eps}$. For $\ell=0$, since
$$A_k(z) = \cI_k +  \sum_{m=1}^k \cI_{k-m} \chi^{m-1}\left( z\right),$$
one can easily deduce that
$$\beta^{0,\eps}(z) = \cI_k + \widetilde{\beta}^{0,\eps}(z)$$
where $\widetilde{\beta}^{0,\eps}$ satisfies the same equation (see Eq. \eqref{eq:order_0}), but with initial condition a periodic function with zero mean value. The key point in the sequel is that: $\beta^{0,\eps}_z = \widetilde{\beta}^{0,\eps}_z.$ And in the definition of $m^{0,\eps}$ and $\mu^{0,\eps}$, 
only the derivative is implied. We also denote
$$\mathfrak{K}_k =  \left\| \sum_{m=1}^k \cI_{k-m}    \chi^{m-1}  \right\|^2_{L^2(\bT)}.$$
The next result is an immediate consequence of Poincar\'e's inequality.
\begin{lemma} \label{lmm:behaviour_beta_0}
There exists a constant $\mathfrak{k}$ depending only on the uniform ellipticity constant of the matrix $a$, such that
$$\forall s \geq 0,\quad  \left\| \widetilde{\beta}^{0,\eps} \left(  .,s \right) \right\|^2_{L^2(\bT)}  \leq \mathfrak{K}_k e^{-\mathfrak{k} s}.$$
\end{lemma}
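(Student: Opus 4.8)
The plan is to run the classical parabolic energy estimate for a divergence‑form equation on the torus, combined with Poincar\'e's inequality. Recall that $\widetilde{\beta}^{0,\eps}$ solves on $\mathbb{T}^d\times(0,\infty)$ the equation
\[
\partial_s \widetilde{\beta}^{0,\eps}(z,s) = \bigl(\mathfrak{a}^\eps\,\widetilde{\beta}^{0,\eps}_z(z,s)\bigr)_z = \mathrm{div}_z\bigl(\mathfrak{a}^\eps\nabla_z\widetilde{\beta}^{0,\eps}\bigr),
\]
with the $z$-periodic initial datum $\widetilde{\beta}^{0,\eps}(z,0)=\sum_{m=1}^{k}\cI_{k-m}\chi^{m-1}(z)$, whose $L^2(\mathbb{T}^d)$-norm squared is exactly $\mathfrak{K}_k$ and whose mean over $\mathbb{T}^d$ vanishes by \eqref{eq:norm_cond_chi0}.

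The first step is to observe that the spatial mean is conserved in time: integrating the equation over $\mathbb{T}^d$, the right-hand side is the integral of a $z$-divergence over a closed manifold, hence zero, so $\frac{d}{ds}\int_{\mathbb{T}^d}\widetilde{\beta}^{0,\eps}(z,s)\,dz=0$ and therefore $\int_{\mathbb{T}^d}\widetilde{\beta}^{0,\eps}(\cdot,s)=0$ for all $s\ge 0$. The second step is the energy identity: multiplying the equation by $\widetilde{\beta}^{0,\eps}$ and integrating by parts over $\mathbb{T}^d$ gives
\[
\tfrac12\,\frac{d}{ds}\bigl\|\widetilde{\beta}^{0,\eps}(\cdot,s)\bigr\|_{L^2(\mathbb{T}^d)}^2 = -\int_{\mathbb{T}^d}\mathfrak{a}^\eps\,\nabla_z\widetilde{\beta}^{0,\eps}\cdot\nabla_z\widetilde{\beta}^{0,\eps}\,dz \le -\lambda^{-1}\bigl\|\nabla_z\widetilde{\beta}^{0,\eps}(\cdot,s)\bigr\|_{L^2(\mathbb{T}^d)}^2,
\]
using the uniform ellipticity of $a$ from Assumption \ref{a3} (which is inherited by $\mathfrak{a}^\eps$). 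Since $\widetilde{\beta}^{0,\eps}(\cdot,s)$ has zero mean on $\mathbb{T}^d$, Poincar\'e's inequality gives a purely dimensional constant $c_P>0$ with $\|\nabla_z\widetilde{\beta}^{0,\eps}\|_{L^2}^2\ge c_P\,\|\widetilde{\beta}^{0,\eps}\|_{L^2}^2$, whence
\[
\frac{d}{ds}\bigl\|\widetilde{\beta}^{0,\eps}(\cdot,s)\bigr\|_{L^2(\mathbb{T}^d)}^2 \le -\mathfrak{k}\,\bigl\|\widetilde{\beta}^{0,\eps}(\cdot,s)\bigr\|_{L^2(\mathbb{T}^d)}^2,\qquad \mathfrak{k}:=2\lambda^{-1}c_P,
\]
so $\mathfrak{k}$ indeed depends only on the ellipticity constant $\lambda$ (and on $d$, which is fixed). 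Grönwall's lemma then yields $\|\widetilde{\beta}^{0,\eps}(\cdot,s)\|_{L^2(\mathbb{T}^d)}^2\le e^{-\mathfrak{k}s}\|\widetilde{\beta}^{0,\eps}(\cdot,0)\|_{L^2(\mathbb{T}^d)}^2=\mathfrak{K}_k e^{-\mathfrak{k}s}$, which is the claim.

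There is no genuine obstacle here; the only point deserving attention is the conservation of the zero spatial mean, since it is precisely this property that licenses the use of the Poincar\'e inequality on the torus and hence produces the exponential (rather than merely bounded) decay. One should also note in passing that all the quantities are smooth by the regularity theory already invoked for the correctors, so the integrations by parts and the differentiation under the integral sign are justified.
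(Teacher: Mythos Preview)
Your proof is correct and follows exactly the approach the paper indicates: the authors state that the lemma is ``an immediate consequence of Poincar\'e's inequality'' without giving further details, and your argument---conservation of the zero mean, the energy identity, uniform ellipticity, Poincar\'e on the torus, and Gr\"onwall---is precisely the standard fleshing-out of that one-line justification.
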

For simplicity for $\ell \geq 1$, let us rewrite the definition of $\beta^{\ell,\eps}$ (Eq. \eqref{eq:order_0} and \eqref{eq:order_ell}) as:
\begin{align*} 
\partial_t \beta^{\ell,\eps} &  =  (a(z,\xi_{t/\eps^\delta})\beta^{\ell,\eps}_z)_z +
(\widehat m^{\ell-2,\eps}(t)\mathfrak{a}^\eps_z + \mu^{\ell-1,\eps} + (\mathfrak{a}^\eps \beta^{\ell-1,\eps})_z )\\ \nonumber
& =  (a(z,\xi_{t/\eps^\delta})\beta^{\ell,\eps}_z)_z + \varphi^{\ell,\eps}.
\end{align*}
\begin{lemma} \label{lmm:behaviour_beta_ell}
For $\ell=1,\ldots,J_1-1$  we have:
$$\forall s \geq 0,\quad  \left\| \beta^{\ell,\eps} \left(  .,s \right) \right\|^2_{L^2(\bT)}  \leq \mathfrak{K}_k  e^{-\mathfrak{k}  s}.$$
\end{lemma}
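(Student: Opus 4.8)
The plan is to prove the estimate by a finite induction on $\ell=1,\dots,J_1-1$, carried out \emph{jointly} with the decay of the memory quantities $m^{\ell,\eps}$, $\widehat m^{\ell,\eps}$, $\mu^{\ell,\eps}$ of Lemma \ref{lmm:behaviour_m_ell}, the whole argument resting on a single $L^2(\bT^d)$ energy estimate combined with Poincar\'e's inequality. The base case $\ell=1$ follows from Lemma \ref{lmm:behaviour_beta_0}: in \eqref{eq:beta_1} one has $\beta^{0,\eps}_z=\widetilde\beta^{0,\eps}_z$, so that the source $\mu^{0,\eps}+(\mathfrak{a}^\eps\beta^{0,\eps})_z$ is, up to the contribution of the constant part of $\beta^{0,\eps}$ (which is controlled through the choice of the $\cI_k$, see below), exponentially small in $L^2(\bT^d)$, and one then applies the energy estimate described next with zero initial data.

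For the inductive step I would first observe that $\beta^{\ell,\eps}(\cdot,t)$ has zero mean on $\bT^d$ for every $t\geq 0$: integrating \eqref{eq:def_beta_generic} over $\bT^d$, the divergence-form operator and the exact $z$-derivatives $\widehat m^{\ell-2,\eps}(t)\mathfrak{a}^\eps_z$ and $(\mathfrak{a}^\eps\beta^{\ell-1,\eps})_z$ all integrate to $0$, while $\langle\mu^{\ell-1,\eps}\rangle=0$ precisely because the constants $m^{\ell,\eps}$ were subtracted in \eqref{eq:def_mu_0_2}, \eqref{eq:def_mu_1} and \eqref{eq:def_mu_ell}; since $\beta^{\ell,\eps}(\cdot,0)=0$ this forces $\langle\beta^{\ell,\eps}(\cdot,t)\rangle=0$, and Poincar\'e's inequality then gives $\|\beta^{\ell,\eps}_z(\cdot,t)\|_{L^2(\bT^d)}^2\geq C_P\|\beta^{\ell,\eps}(\cdot,t)\|_{L^2(\bT^d)}^2$. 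Next, multiplying \eqref{eq:def_beta_generic} by $\beta^{\ell,\eps}$, integrating over $\bT^d$, and integrating by parts both the dissipative term and the two $z$-derivative pieces of $\varphi^{\ell,\eps}$, Assumption \ref{a3} together with Young's inequality (absorbing the cross terms into the dissipation) yields
\begin{equation*}
\tfrac{d}{dt}\|\beta^{\ell,\eps}(\cdot,t)\|^2 \;\leq\; -\,\mathfrak{k}\,\|\beta^{\ell,\eps}(\cdot,t)\|^2 \;+\; C\Big(|\widehat m^{\ell-2,\eps}(t)|^2+\|\beta^{\ell-1,\eps}(\cdot,t)\|_{H^1(\bT^d)}^2+\|\mu^{\ell-1,\eps}(\cdot,t)\|^2\Big),
\end{equation*}
with $\mathfrak{k}$ depending only on $\lambda$ and the Poincar\'e constant of $\bT^d$. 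Since $\beta^{\ell,\eps}(\cdot,0)=0$, Gr\"onwall's lemma gives $\|\beta^{\ell,\eps}(\cdot,t)\|^2\leq C\int_0^t e^{-\mathfrak{k}(t-s)}\Phi^\eps(s)\,ds$, where $\Phi^\eps$ denotes the bracketed source.

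The proof then closes once $\Phi^\eps$ is shown to decay exponentially. By the induction hypothesis all lower-order norms $\|\beta^{j,\eps}(\cdot,s)\|_{L^2(\bT^d)}^2$, $j<\ell$, are $\leq\mathfrak{K}_k e^{-\mathfrak{k}s}$; a parallel $H^1$-energy estimate (equivalently, parabolic smoothing for $s\geq1$ with the trivial bound on $[0,1]$) upgrades this to $\|\beta^{\ell-1,\eps}(\cdot,s)\|_{H^1(\bT^d)}^2\leq C\mathfrak{K}_k e^{-\mathfrak{k}s}$; feeding these into the definitions \eqref{eq:def_m_ell}, \eqref{eq:def_mu_ell} and invoking Lemma \ref{lmm:behaviour_m_ell} for $|\widehat m^{\ell-2,\eps}(s)|^2$ gives $\Phi^\eps(s)\leq C\mathfrak{K}_k e^{-\mathfrak{k}s}$, whence $\int_0^t e^{-\mathfrak{k}(t-s)}e^{-\mathfrak{k}s}\,ds=t\,e^{-\mathfrak{k}t}\leq Ce^{-\mathfrak{k}'t}$ after an infinitesimal decrease of the rate (harmless, since there are only finitely many $\ell$). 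The main obstacle is exactly the nonlocal-in-time term $\widehat m^{\ell-2,\eps}(t)=\int_0^t m^{\ell-2,\eps}(s)\,ds$: even when $m^{\ell-2,\eps}$ itself decays exponentially, its antiderivative \emph{a priori} converges only to a constant, which would destroy the decay of $\Phi^\eps$, hence of $\beta^{\ell,\eps}$. The point — and the reason the constants $\cI_k$ are introduced and defined through \eqref{def:I_k_al>2} in Section \ref{sect:rest_init_cond} — is that they are chosen so that the relevant limits $\lim_{t\to\infty}\widehat m^{\ell,\eps}(t)$ vanish, so that $\widehat m^{\ell,\eps}$ itself decays exponentially; it is for this reason that Lemma \ref{lmm:behaviour_beta_ell} and Lemma \ref{lmm:behaviour_m_ell} are naturally proved together, in a single induction on $\ell$ alternating the energy estimate above for $\beta^{\ell,\eps}$ with the decay estimate for $\widehat m^{\ell,\eps}$.
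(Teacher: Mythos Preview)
Your overall strategy --- the $L^2(\bT^d)$ energy estimate, the zero-mean observation allowing Poincar\'e's inequality, and the finite induction on $\ell$ --- is exactly the argument the paper uses, and the differential inequality you write down is the right one. You are also right that for $\ell\geq 2$ the source $\varphi^{\ell,\eps}$ in \eqref{eq:def_beta_generic} contains the term $\widehat m^{\ell-2,\eps}(t)\,\mathfrak{a}^\eps_z$ (and, already for $\ell=1$, the constant part $\cI_k$ of $\beta^{0,\eps}$ feeds into $(\mathfrak{a}^\eps\beta^{0,\eps})_z$); the paper's proof simply asserts the decay of $\varphi^{1,\eps}$ and then writes ``by recursion'' without commenting on this.

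Where your proposal goes wrong is in the resolution of this obstacle. You invoke Lemma~\ref{lmm:behaviour_m_ell} to conclude that $|\widehat m^{\ell-2,\eps}(s)|^2$ decays exponentially, and you claim that the constants $\cI_k$ are chosen in \eqref{def:I_k_al>2} precisely so that $\lim_{t\to\infty}\widehat m^{\ell,\eps}(t)=0$. Neither statement is correct. Lemma~\ref{lmm:behaviour_m_ell} says that $\widehat m^{\ell,\eps}(t/\eps^2)$ converges, as $\eps\to 0$, to the \emph{generally nonzero} constant $\int_0^\infty\langle\bar a\,\widehat\beta^{\ell}_z(\cdot,s)\rangle\,ds=\mathfrak C_{k,\ell}$; it gives no decay of $s\mapsto\widehat m^{\ell,\eps}(s)$ whatsoever. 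And \eqref{def:I_k_al>2} fixes $\cI_m$ so that the \emph{combination} $\cI_m+\sum_{\ell=1}^{m-1}\mathfrak C_{m-\ell,\ell-1}$ vanishes --- a cancellation that occurs one level up, in the expansion of $\rho^\eps$, not a mechanism that drives any individual $\widehat m^{\ell,\eps}$ to zero. In particular the paper proves Lemma~\ref{lmm:behaviour_m_ell} \emph{after} Lemma~\ref{lmm:behaviour_beta_ell} and uses the latter as input, not the other way round; your proposed joint induction would need a different ingredient to close, because the one you name is not available.
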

\begin{proof}
Recall that $\lambda$ is the ellipticity constant of $a$ (Condition \ref{a4}). Again by Poincar\'e's inequality, we deduce that
\begin{equation*}
|m^{0,\eps}(t)| = \left| \langle  a^\eps \left(  .,\xi_{\frac{t}{\eps^\delta}} \right) \beta^{0,\eps}_z \left(  .,t \right) \rangle \right| \leq  \frac{\mathfrak{K}_k}{\lambda} e^{-\mathfrak{k} t}.
\end{equation*}
And
$$\varphi^{1,\eps} = \mu^{0,\eps} + (\mathfrak{a}^\eps \beta^{0,\eps})_z  = \mathfrak{a}^\eps \beta^{0,\eps}_z - m^{0,\eps} + (\mathfrak{a}^\eps\beta^{0,\eps})_z $$
satisfies a similar inequality: $\left\| \varphi^{1,\eps}(.,t)\right\|^2_{L^2(\bT)} \leq \mathfrak{K}_k  e^{-\mathfrak{k} t}.$
From its very definition, we deduce that $\left\| \beta^{1,\eps} \left(  .,s \right) \right\|_{L^2(\bT)}^2 \leq \mathfrak{K}_k   e^{-\mathfrak{k} s} .$
By recursion, this achieves the proof of the Lemma.
\end{proof}

We also have to control the terms $\widehat m^{\ell,\eps}$ for $\ell=0,1,\ldots,N_0$.

%
%

\begin{lemma} \label{lmm:behaviour_m_ell}
For any $\widetilde \delta < \delta/2$, the quantity
$$\eps^{-\widetilde \delta} \left| \widehat m^{\ell, \eps}(t/\eps^2) - \int_0^\infty \langle  \bar a \left(  . \right) \widehat \beta^{\ell}_z \left(  .,s \right) \rangle ds  \right|$$
converges in probability to zero, uniformly in time, where
\begin{equation*} 
\partial_t \widehat \beta^{0}  = (\bar a(z)\widehat \beta^{0}_z)_z ,\quad \widehat \beta^{0}(z,0)=\sum_{j=1}^k \cI_{k-j} \chi^{j-1}\left( z\right),
\end{equation*}
and  for any $\ell \geq 1$
\begin{equation*}  
\partial_t \widehat \beta^{\ell}  = (\bar a(z)\widehat \beta^{\ell}_z)_z +\overline{\varphi^{\ell}}(z,t), \quad \widehat \beta^{\ell}(z,0)=0.
\end{equation*}
\end{lemma}
\begin{proof}
The function $\widehat \beta^{0}$ is well defined and do not depend on $\eps$. Moreover it also satisfies
$$\left\| \widehat \beta^{0} \left(  .,s \right) \right\|_{L^2(\bT)}^2 \leq \mathfrak{K}_k  e^{-\mathfrak{k} s}  .$$
We assume that
$$\widetilde \beta^{0,\eps}(z,s) =\widehat \beta^{0}(z,s)  + \eps^{\delta} \Psi^{0,\eps}(z,s,\xi_{s/\eps^\delta}) +\mathfrak{R}^{0,\eps}.$$
Then from the definition of $\beta^{0,\eps}$ in \eqref{eq:order_0} and \eqref{eq:order_ell} we obtain
\begin{eqnarray*}
d\widetilde \beta^{0,\eps}(z,s) & = & (\bar a(z)\widehat \beta^{0}_z)_z ds + \eps^{\delta} \left[\eps^{-\delta} \cL \Psi^{0,\eps} ds + \eps^{-\delta/2} \Psi^{0,\eps}_y q(\xi_{s/\eps^\delta})  dW_s \right. \\
&& \hspace{4cm} \left.+ \Psi^{0,\eps}_s ds  \right] + d\mathfrak{R}^{0,\eps} \\
 & =& (a(z,\xi_{s/\eps^\delta})\widehat \beta^{0}_z)_z ds + \eps^\delta (a(z,\xi_{s/\eps^\delta})\Psi^{0,\eps}_z)_z ds + (a(z,\xi_{s/\eps^\delta})\mathfrak{R}^{0,\eps}_z)_z ds.
 \end{eqnarray*}
If we define $\Psi^{0,\eps}$ by:
$$\cL \Psi^{0,\eps} = ((a(z,y)-\bar a(z))\widehat \beta^{0}_z)_z,$$
the residual $\mathfrak{R}^{0,\eps}$ satisfies the equation:
$$d\mathfrak{R}^{0,\eps} = (a(z,\xi_{s/\eps^\delta})\mathfrak{R}^{0,\eps}_z)_z ds + \eps^{\delta/2}\Psi^{0,\eps}_y q(\xi_{s/\eps^\delta})  dW_s + \eps^{\delta} \mathfrak{B}_s^{0,\eps}ds$$
where $\mathfrak{B}^{0,\eps}$ is bounded. The initial condition is:
$$\mathfrak{R}^{0,\eps}(z,0) = -\eps^{\delta} \Psi^{0,\eps}.$$
Coming back to $m^{0,\eps}$ in \eqref{eq:order_0} we have
\begin{eqnarray*}
&& m^{0,\eps}(t) - \langle  \mathfrak{a}^\eps \widehat \beta^{0}_z \left(  .,t \right) \rangle = \langle  \mathfrak{a}^\eps \left[ \beta^{0}_z \left(  .,t \right) - \widehat \beta^{0}_z \left(  .,t \right)  \right] \rangle \\
&& \quad =\eps^{\delta} \langle  \mathfrak{a}^\eps \Psi_z^{0,\eps}(.,t,\xi_{t/\eps^\delta})\rangle  + \langle  \mathfrak{a}^\eps \mathfrak{R}_z^{0,\eps}(.,t,\xi_{t/\eps^\delta})\rangle
\end{eqnarray*}
Note that $\Psi_z^{0,\eps}$ is bounded in $L^2(\bT^d)$ by $\mathfrak{K}_k e^{-\mathfrak{k} t}$ and the quantity $\mathfrak{R}^{0,\eps}_z$ is bounded in any space $\bL^p(\Omega)$ by  $\eps^{\delta/2} \mathfrak{K}_k e^{-\mathfrak{k} t}$. Hence we deduce that
\begin{eqnarray*}
&& \left| \widehat m^{0,\eps}(t) - \int_0^t \langle  a\left(  .,\xi_{\frac{s}{\eps^\delta}} \right) \widehat \beta^{0}_z \left(  .,s\right) \rangle ds \right| \\
&& \quad \leq \int_0^t \left| m^{0,\eps}(s) - \langle  a \left(  .,\xi_{\frac{s}{\eps^\delta}} \right) \widehat \beta^{0}_z \left(  .,s \right) \rangle \right| ds \\
&& \quad \leq \lambda\eps^{\delta}  \int_0^t \mathfrak{K}_k e^{-\mathfrak{k} s} ds + \lambda \int_0^t  \|\mathfrak{R}_z^{0,\eps}(.,s,\xi_{s/\eps^\delta}) \|_{\bL^2(\bT)} ds .
\end{eqnarray*}
Therefore for any $p \geq 1$, there exists a constant C (independent of $\eps$) such that for any $t \geq 0$
$$\E \left( \left| \widehat m^{0,\eps}(t) -  \int_0^t  \langle  a \left(  .,\xi_{\frac{s}{\eps^\delta}} \right) \widehat \beta^{0}_z \left(  .,s\right) \rangle ds \right|^p \right) \leq C \eps^{\delta p/2}.$$
In particular the previous inequality holds when we replace $t$ by $t/\eps^2$. Moreover from the estimate of $\widehat \beta^0$, there exists a constant $C$ such that a.s. for any $\eps > 0$ and $t > 0$
 $$\left| \int_0^{t/\eps^2}  \langle  a \left(  .,\xi_{\frac{s}{\eps^\delta}} \right) \widehat \beta^{0}_z \left(  .,s\right) \rangle ds - \int_0^{+\infty}  \langle  a \left(  .,\xi_{\frac{s}{\eps^\delta}} \right) \widehat \beta^{0}_z \left(  .,s\right) \rangle ds \right| \leq Ce^{-\mathfrak{k} t/\eps^2}$$
Let us consider for a fixed $T > 0$
$$\int_0^T \langle  (a \left(  .,\xi_{\frac{s}{\eps^\delta}} \right) - \bar a(.)) \widehat \beta^{0}_z \left(  .,s\right) \rangle ds = \eps^\delta \int_0^{T/\eps^\delta}  \langle  (a \left(  .,\xi_{s} \right) - \bar a(.)) \widehat \beta^{0}_z \left(  .,\eps^\delta s\right) \rangle ds.$$
Our assumption {\bf (A)} implies that $\xi$ satisfies a strong mixing condition (see \cite{vere:97}). Thus from the ergodic theorem, this quantity converges a.s. to zero (see \cite{lipt:shir:89}, chapter 4 or \cite{chec:piat:sham:07}, chapter 1). Moreover the rate of convergence is of order $\eps^{\delta/2} = \eps^{\al/2-1}$, that is for any $\gamma > 0$ the following quantity
\begin{eqnarray*}
&& \eps^{-\delta/2 +\gamma} \int_0^T \langle  (a \left(  .,\xi_{\frac{s}{\eps^\delta}} \right) - \bar a(.)) \widehat \beta^{0}_z \left(  .,s\right) \rangle ds \\
&& \quad =\eps^{\gamma} \left[\eps^{\delta/2} \int_0^{T/\eps^\delta}  \langle  (a \left(  .,\xi_{s} \right) - \bar a(.)) \widehat \beta^{0}_z \left(  .,\eps^\delta s\right) \rangle ds \right]
\end{eqnarray*}
tends to zero in probability as $\eps$ goes to zero. Indeed it is a consequence of the central limit theorem (implied by our assumption {\bf (A)} and the mixing property, see \cite{lipt:shir:89}, chapter 9) together with Slutsky's theorem. To finish the proof we have:
\begin{eqnarray*}
&& \left|\int_0^\infty \langle a \left(  .,\xi_{\frac{s}{\eps^\delta}} \right) \widehat \beta^{0}_z \left(  .,s\right) \rangle ds -\int_0^\infty \langle \bar a(.) \widehat \beta^{0}_z \left(  .,s\right) \rangle ds \right|\\
&& \quad \leq \left|\int_0^T \langle  (a \left(  .,\xi_{\frac{s}{\eps^\delta}} \right) - \bar a(.)) \widehat \beta^{0}_z \left(  .,s\right) \rangle ds \right| \\
&& \qquad +  \left|\int_T^\infty \langle  (a \left(  .,\xi_{\frac{s}{\eps^\delta}} \right) - \bar a(.)) \widehat \beta^{0}_z \left(  .,s\right) \rangle ds \right|.
\end{eqnarray*}
The first part converges a.s. to zero when $\eps$ tends to zero (with a rate of convergence of order $\eps^{\delta/2}$ in probability) to a fixed $T$, whereas the second part converges to zero when $T$ tends to $+\infty$ in any $\bL^p(\Omega)$.

Then by recursion we can complete the proof of the lemma.
\end{proof}

\vspace{0.5cm}

We introduce again the constant $k$ in all functions. Since $\cI_1= 0$, gathering all previous Lemmata, we deduce that the expansion \eqref{eq:expansion_varrho} of $\varrho^{1,\eps}$ can be written:
\begin{eqnarray*}
\varrho^{1,\eps}(x,t) & = &  \beta^{1,0,\eps} \left(  \frac{x}{\eps},\frac{t}{\eps^2} \right) \partial^{1}_x u^{0} \left(x,t\right)  \\ \nonumber
& + & \eps \left[ \widehat m^{1,0,\eps} \left( \frac{t}{\eps^2} \right) + \beta^{1,1,\eps} \left(  \frac{x}{\eps},\frac{t}{\eps^2} \right) \right] \partial^{2}_x u^{0} \left(x,t\right) + \Gamma^{1,1,\eps}(x,t) ,
\end{eqnarray*}
where $\Gamma^{1,1,\eps}= \cO(\eps^\nu)$ (which means $\eps^\nu$ times some bounded term) with $\nu > \al/2-1$ and $\beta^{1,0,\eps}$ and $\beta^{1,1,\eps}$ converge exponentially fast to zero.
Now let us come again to
\begin{eqnarray*}
\rho^{1,\eps} (x,t) & = & -\eps^{1-\al/2} \varrho^{1,\eps} \\
& = & -\eps^{2-\al/2}  \widehat m^{1,0,\eps} \left( \frac{t}{\eps^2} \right) \partial^{2}_x u^{0} \left(x,t\right) + \cO(\eps^{\nu+1-\al/2}).
\end{eqnarray*}
We have proved that $L^2(\R \times (0,T))$ norm of the remainder $\rho^{1,\eps}$ converges  to zero in probability.

\vspace{0.5cm}
Now the preceding results imply that the expansion \eqref{eq:expansion_varrho} of $\varrho^{k,\eps}$
\begin{eqnarray*} 
&& \varrho^{k,\eps}(x,t) =  \beta^{k,0,\eps} \left(  \frac{x}{\eps},\frac{t}{\eps^2} \right) \partial^{k}_x u^{0} \left(x,t\right)  \\ \nonumber
&& \quad + \sum_{\ell = 1}^{J_1 - k } \eps^\ell \left[ \widehat m^{k,\ell-1,\eps} \left(  \frac{t}{\eps^2} \right)  + \beta^{k,\ell,\eps} \left(  \frac{x}{\eps},\frac{t}{\eps^2} \right) \right] \partial^{k+\ell}_x u^{0} \left(x,t\right) + \Gamma^{J_1-k,\eps}(x,t).
\end{eqnarray*}
can be written:
\begin{eqnarray*} 
&& \varrho^{\eps}(x,t) =\cI_k \partial^{k}_x u^{0} \left(x,t\right)  \\ \nonumber
&& \quad + \sum_{\ell = 1}^{J_1 - k } \eps^\ell \left[ \int_0^\infty \langle  \bar a \left(  . \right) \widehat \beta^{k,\ell-1}_z \left(  .,s \right) \rangle ds   \right] \partial^{k+\ell}_x u^{0} \left(x,t\right) + \mathfrak{r}^\eps(x,t).
\end{eqnarray*}
Again the $L^2(\R \times (0,T))$ norm of the remainder $\mathfrak{r}^\eps$ converges  in probability to zero. 
Denote by
$$\mathfrak{C}_{k,\ell-1}  = \int_0^\infty \langle  \bar a \left(  . \right) \widehat \beta^{k,\ell-1}_z \left(  .,s \right) \rangle ds $$
and note that $\mathfrak{C}_{k,\ell-1}$ depends only on $\cI_0=1,\cI_1=0,\ldots,\cI_{k-1}$.
Therefore we obtain, up to some negligible term of order $\cO(\eps^\nu)$:
\begin{eqnarray*}
\rho^\eps (x,t) & = &- \sum_{k=1}^{J_1}  \eps^{k-\al/2} \left[ \cI_k \partial^{k}_x u^{0} \left(x,t\right)  + \sum_{\ell = 1}^{J_1 - k} \eps^{\ell} \mathfrak{C}_{k,\ell-1} \partial^{k+\ell}_x u^{0} \left(x,t\right)  \right] \\
& = & - \sum_{m=2}^{J_1}  \eps^{m-\al/2} \left[\cI_m+\sum_{\ell = 1}^{m-1} \mathfrak{C}_{m-\ell,\ell-1} \right]  \partial^{m}_x u^{0} \left(x,t\right) .
\end{eqnarray*}
Then for $k\geq 2$, if we choose
\begin{equation*} 
\cI_k = -  \sum_{\ell = 1}^{k-1} \mathfrak{C}_{k-\ell,\ell-1}  =- \int_0^\infty \langle  \bar a \left(  . \right) ( \sum_{\ell = 1}^{k-1} \widehat \beta^{k-\ell,\ell-1}_z ) \left(  .,s \right) \rangle ds
\end{equation*}
provided this sequence is well defined, we deduce that $\rho^\eps (x,t) = \cO(\eps^\nu)$.

To complete the proof we need to show that the sequence $\cI_k$ for $k\geq 2$ is well-defined. We have
\begin{equation*} 
\cI_k =- \int_0^\infty \langle  \bar a \left(  . \right) \mathfrak{B}^k_z \left(  .,s \right) \rangle ds
\end{equation*}
with
$$\mathfrak{B}^k =  \sum_{\ell = 1}^{k-1} \widehat \beta^{k-\ell,\ell-1}.$$
Thus $\cI_k$ is wellposed if $\mathfrak{B}^k$ only depends on $\cI_0,\cI_1,\ldots,\cI_{k-1}$. But for $k=2$
$$\cC_2 = -  \mathfrak{C}_{1,0} =- \int_0^\infty \langle  \bar a \left(  . \right) \widehat \beta^{1,0}_z \left(  .,s \right) \rangle ds$$
and $\widehat \beta^{1,0}$ depends only on $\chi^0$.
Then the function $\mathfrak{B}^k$ satisfies the equation
\begin{equation*}  
\partial_t \mathfrak{B}^k  = (\bar a(z)\mathfrak{B}^k_z)_z + \cH^{k}(z,t)
\end{equation*}
with initial value
$$\mathfrak{B}^k(z,0) = \widehat \beta^{k-1,0}(z,0) = \sum_{n=1}^{k-1} \cI_{k-1-n} \chi^{n-1}\left( z\right)$$
and with
$$\cH^{k}(z,t)  = \sum_{\ell=1}^{k-1}\overline{\phi^{k-\ell,\ell-1}}(z,t).$$
We can prove by recursion that $\phi^{k-\ell,\ell-1}$ only depends on $\cI_2,\ldots,\cI_{k-\ell-1}$, which leads to the well-posedness on $\cI_k$. Finally we obtain:
\begin{proposition} \label{prop:rest_init_cond}
There exists a sequence $(\cI_k, \ k\geq 2)$ such that $\bL^2(\R \times (0,T))$ norm of the residual $\rho^\eps$ converges to zero  in probability.
\end{proposition}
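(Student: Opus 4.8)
The plan is to combine the linear decomposition \eqref{eq:lin_decomp_rho} with the expansion \eqref{eq:expansion_varrho} and the behaviour Lemmata \ref{lmm:behaviour_beta_0}, \ref{lmm:behaviour_beta_ell} and \ref{lmm:behaviour_m_ell}, and then to choose the constants $\cI_k$ so that every nonpositive power of $\eps$ in the resulting formula for $\rho^\eps$ is cancelled. First I would recall that, by linearity of \eqref{eq:SPDE_eps_1}, one has $\rho^\eps=\sum_{k=1}^{J_1}\rho^{k,\eps}$ with $\rho^{k,\eps}=-\eps^{k-\al/2}\varrho^{k,\eps}$, where $\varrho^{k,\eps}$ solves \eqref{eq:SPDE_eps_1} with initial datum $A_k(x/\eps)\,\partial_x^k u^0(x,0)$. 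For each fixed $k$ the expansion \eqref{eq:expansion_varrho} applies, and its last term $\Gamma^{J_1-k,\eps}$ is of order $\eps^{\nu}$ with $\nu>\al/2-k$, hence negligible after multiplication by $\eps^{k-\al/2}$.

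Next I would feed in the three behaviour lemmata. By Lemmata \ref{lmm:behaviour_beta_0} and \ref{lmm:behaviour_beta_ell}, each profile $\beta^{k,\ell,\eps}(\cdot,t/\eps^2)$ decays exponentially fast in $L^2(\bT^d)$ as soon as $t/\eps^2$ is large, so every term $\eps^\ell\beta^{k,\ell,\eps}(x/\eps,t/\eps^2)\,\partial_x^{k+\ell}u^0(x,t)$ tends to zero in $\bL^2(\R^d\times(0,T))$ and in probability; and by Lemma \ref{lmm:behaviour_m_ell} the averaged quantity $\widehat m^{k,\ell-1,\eps}(t/\eps^2)$ converges, in probability and uniformly in $t$, to $\mathfrak{C}_{k,\ell-1}=\int_0^\infty\langle\bar a(\cdot)\,\widehat\beta^{k,\ell-1}_z(\cdot,s)\rangle\,ds$. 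Collecting these facts, and using $\cI_1=0$ to discard the leading term of $\varrho^{1,\eps}$, one obtains, up to a remainder $\mathfrak r^\eps$ converging to zero in $\bL^2(\R^d\times(0,T))$ and in probability,
\[
\rho^\eps(x,t)=-\sum_{m=2}^{J_1}\eps^{m-\al/2}\Big[\cI_m+\sum_{\ell=1}^{m-1}\mathfrak{C}_{m-\ell,\ell-1}\Big]\partial_x^m u^0(x,t)+\mathfrak r^\eps(x,t).
\]
Since $J_1=\lfloor\al/2\rfloor\le\al/2$, every exponent $m-\al/2$ above is nonpositive, so without further care these terms would prevent convergence; choosing $\cI_k$, $k\ge2$, recursively by \eqref{def:I_k_al>2}, that is $\cI_k=-\sum_{\ell=1}^{k-1}\mathfrak{C}_{k-\ell,\ell-1}$, makes every bracket vanish and leaves $\rho^\eps=\mathfrak r^\eps$, which is the asserted convergence.

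It then remains to check that this recursion is well posed, i.e. that $\cI_k$ depends only on $\cI_0,\dots,\cI_{k-1}$. Here I would argue by induction on $k$: the constant $\mathfrak{C}_{k,\ell-1}$ is determined by $\widehat\beta^{k,\ell-1}$, which solves the linear parabolic problem \eqref{eq:hat_beta_0}--\eqref{eq:hat_beta_ell} whose initial datum is built from the $\chi^{m-1}$ and whose source term involves only the lower-order objects $\widehat m^{\cdot,\cdot,\eps}$ and $\beta^{\cdot,\cdot,\eps}$. Since $\mathfrak{B}^k=\sum_{\ell=1}^{k-1}\widehat\beta^{k-\ell,\ell-1}$ and the base case $\mathfrak{C}_{1,0}$ depends on $\chi^0$ alone, an induction organised by the total index shows that $\mathfrak{B}^k$, hence $\cI_k$, is a function of $\cI_0=1,\cI_1=0,\dots,\cI_{k-1}$ only, which closes the construction.

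The main obstacle is Lemma \ref{lmm:behaviour_m_ell}: controlling the long-time behaviour of the fast time averages $\widehat m^{k,\ell,\eps}(t/\eps^2)$ requires using simultaneously the exponential decay of the deterministic profiles $\widehat\beta^{k,\ell}$ — which makes the integral over $[0,\infty)$ finite and allows truncation of the time interval — and the strong mixing together with the central limit theorem for $\xi$ on the fast time scale $t/\eps^\delta$, $\delta=\al-2$, so as to obtain convergence in probability with the quantitative rate $\eps^{\delta/2-\gamma}$. The remaining work is essentially bookkeeping: keeping track, through the recursion \eqref{def:I_k_al>2}, of which constants $\mathfrak{C}_{k,\ell}$ and profiles $\beta^{k,\ell,\eps}$ depend on, which is routine once the induction is set up by the total index.
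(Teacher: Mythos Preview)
Your proposal is correct and follows essentially the same approach as the paper: the linear decomposition \eqref{eq:lin_decomp_rho}, the expansion \eqref{eq:expansion_varrho}, the splitting $\beta^{k,0,\eps}=\cI_k+\widetilde\beta^{k,0,\eps}$ together with Lemmata \ref{lmm:behaviour_beta_0}--\ref{lmm:behaviour_m_ell}, the re-indexing by $m=k+\ell$ yielding the bracket $\cI_m+\sum_{\ell=1}^{m-1}\mathfrak C_{m-\ell,\ell-1}$, the choice \eqref{def:I_k_al>2}, and the well-posedness check via the induction on $k$ showing that $\mathfrak B^k$ depends only on $\cI_0,\dots,\cI_{k-1}$ --- all of this mirrors the paper's argument.
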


\bigskip\noindent
{\bf Acknowledgements.}  The work of the second author was partially supported by Russian Science Foundation, project number 14-50-00150.

\end{document}